\numberwithin{equation}{section}
\newtheorem{theorem}{Theorem}[section]
\newtheorem{cor}[theorem]{Corollary}
\newtheorem{lemma}[theorem]{Lemma}
\newtheorem{prop}[theorem]{Proposition}
\theoremstyle{definition}
\newtheorem{defi}{Definition}[section]
\newtheorem{rem}[defi]{Remark}
\newcommand\half{\frac{1}{2}}
\newcommand\ov{\overline}
\newcommand\be{\beta}
\newcommand\g{\mathfrak g}
\newcommand\ga{\widehat{\mathfrak g}}
\newcommand\h{\mathfrak h}
\newcommand\D{\Delta}
\renewcommand\l{\lambda}
\newcommand\Dp{\Delta^+}
\renewcommand\d{\delta}
\renewcommand\a{\alpha}
\renewcommand\aa{\mathfrak a}
\renewcommand\k{\mathfrak k}
\newcommand{\Z}{\mathbb Z}
\newcommand\nat{\mathbb N}
\newcommand\ganz{\mathbb Z}
\newcommand\s{\sigma}
\renewcommand\L{\Lambda}
\renewcommand\aa{\mathfrak a}
\newcommand\e{\epsilon}
\newcommand\C{\mathbb C}
\newcommand\p{\mathfrak p}
\newcommand{\vac}{{\bf 1}}
\newcommand{\bea}{\begin{eqnarray}}
\newcommand{\eea}{\end{eqnarray}}
\begin{document}
\title[Finite vs infinite decompositions in  conformal embeddings]{Finite vs infinite decompositions in  conformal embeddings}
\author[Adamovi\'c, Kac, M\"oseneder, Papi, Per\v{s}e]{Dra{\v z}en~Adamovi\'c}\thanks{D.A.  and O.P. are partially supported by the Croatian Science Foundation under the project 2634 and by
the Croatian Scientific Centre of Excellence QuantixLie..}
\author[]{Victor~G. Kac}
\author[]{Pierluigi~M\"oseneder Frajria}
\author[]{Paolo~Papi}
\author[]{Ozren~Per\v{s}e}
\begin{abstract} Building on work of the first and last author, we prove that an embedding of simple affine vertex algebras $V_{\mathbf{k}}(\g^0)\subset V_{k}(\g)$, corresponding to an embedding of a maximal equal rank reductive subalgebra $\g^0$ into a simple Lie algebra $\g$, 
  is conformal if and only if the corresponding central charges are equal. We classify the equal rank  conformal embeddings. Furthermore we describe, in almost all cases,  when $V_{k}(\g)$ decomposes finitely as a $V_{\mathbf{k}}(\g^0)$-module.
\end{abstract}
\keywords{conformal embedding, vertex operator algebra, central charge}
\subjclass[2010]{Primary    17B69; Secondary 17B20, 17B65}
\maketitle
\section{Introduction}
Let $V$ and $W$ be vertex algebras equipped with Virasoro elements $\omega_V$, $\omega_W$ and assume that $W$ is a vertex subalgebra of $V$. 
\begin{defi}\label{1}We say that $W$ is conformally embedded in $V$ if $\omega_V=\omega_W$.\end{defi}
\par
In this paper we deal with following problems.
\begin{enumerate}
\item Classify conformal embeddings when $V, W$ are  affine vertex algebras endowed with  $\omega_V$, $\omega_W$ given by Sugawara construction.
\item Decide whether the decomposition of $V$ as a $W$-module is finite, and in such a  case find the explicit decomposition.
\end{enumerate}
The general definition of conformal embedding introduced above is a natural generalization of the following notion, which has been popular in physics literature in the mid 80's, due to its  relevance for string compactifications.\par
Let $\g$ be a semisimple finite-dimensional complex Lie algebra and $\g^0$  a reductive subalgebra of $\g$. The embedding $\g^0\hookrightarrow \g$ is  called conformal    if the central charge of the 
Sugawara construction for
the affinization $\ga$,  acting on a level $1$ integrable module, equals that for the natural embedding  of $\widehat{\g^0}$ in $\ga$. Such an embedding is called maximal if no 
reductive subalgebra $\aa$ with $\g^0\subsetneq \aa\subsetneq\g$ embeds conformally in $\g$.\par
Maximal conformal embeddings were classified
in \cite{SW},  \cite{AGO}, and the corresponding decompositions are described in  \cite{KWW}, \cite{KS}, \cite{CKMP}. In the  vertex algebra framework the definition can be rephrased as follows: the simple affine vertex  algebras $V_{\bf k}(\g^0)$ and $V_1(\g)$ have the same
Sugawara conformal vector for some multiindex $\bf k$ of levels. 
One may wonder whether  the embedding $V_{\bf k}(\g^0)\subset V_k(\g)$ is conformal according to Definition \ref{1} for some  $k$, not necessarily $1$.\par
Section \ref{tre} is devoted  to answer question (1). We prove    that equality of central charges still detects conformality of maximal equal rank subalgebras: see Theorem \ref{main}.  In Proposition \ref{critnonmax}, we deal with non maximal equal rank embeddings. The proof of Theorem \ref{main} is elementary: it is obtained by combining  the results of \cite{A} with general results of Panyushev \cite{PLMS}Ê on combinatorics of root systems. \par
Question (2) has appeared many times in literature. We give a complete answer in  Section \ref{FD} when  $\g^0$ is semisimple and an almost complete answer in Section \ref{FDHS} when $\g^0$ has a nonzero center. The missing cases are listed in Remark \ref{missing}. Explicit decompositions are listed in Section \ref{6}.\par
To prove that  a conformal pair has finite decomposition, we use an enhancement, given in Theorems  \ref{general}  and \ref{generalhs} of results from \cite{A}. Infinite decompositions are settled by exhibiting 
infinitely many singular vectors: see Proposition \ref{43} and Corollary \ref{46} for the semisimple case.
 As a byproduct of our analysis   we obtain, in the semisimple case,  a criterion for infinite decomposition in terms of the existence of a $\widehat{\g^0}$-singular vector having  conformal weight  $2$: see Proposition \ref{cw}. 
 Other cases with infinite decomposition when $\g^0$ has  a nonzero center are dealt with in Theorem \ref{infinidec}.  The methods to prove this theorem are a combination of the ideas  used in the semisimple case with explicit realization of $V_k(\g)$, more precisely,  the Kac-Wakimoto free field realization of $V_{-1}(sl(n+1))$ \cite{Kw}Ê and Adamovi\'c's recent realization of $V_{-3/2}(sl(3))$ in the tensor product of the $N = 4$ superconformal vertex algebra with a suitable lattice vertex algebra \cite{A-2014}.
 \par 
It is worthwhile to note that  the  methods used in the proof of Theorems \ref{51} and  \ref{infinidec}  lead to look for conformal embeddings of simple affine vertex algebras in $W$-algebras. For instance, it is possible to embed conformally $V_{-\frac{n+1}{2}}(gl(n))$ in the $W$-algebra $W_{\frac{n-1}{2}}(sl(2|n),f)$ for a suitable nilpotent element in $sl(2|n)$ of even parity.
A systematic  investigation of conformal embeddings of  affine subalgebras in W-algebras has started in \cite{AKMPP1}, \cite{AKMPP2}.\par
{\bf Acknowledgments.} We would like to thank the anonymous referee for useful suggestions.
 \section{Setup and preliminary results}
\subsection{Notation}Let $\g$ be  a simple Lie algebra. Let $\h$ be a Cartan subalgebra, $\D$ the 
$(\g,\h)$-root system, $\Dp$  a set of positive roots and $\rho$ the corresponding Weyl vector. Let $(\cdot,\cdot)$ denote the  bilinear normalized invariant  form (i.e., $(\a,\a)=2$ for any long root).
If $\a\in\D$, we will denote by $X_\a$ a root vector relative to $\a$.

Assume that $\g^0$ is a reductive equal rank subalgebra of $\g$. Then $\g^0$ decomposes as 
\begin{equation}\label{decompg0}
\g^0=\g_{0}^0\oplus\g^0_1\oplus\cdots\oplus \g_t^0.
\end{equation}
 where  $\g^0_0$ is the (possibly zero) center of $\g^0$ and $\g^0_i$ are simple ideals for $i>0$. Let $\rho_0^j$ be the Weyl vector in $\g^0_j$ (w.r.t. the  set of positive roots induced by $\Dp$). 
 Assume that $(\cdot,\cdot)$ is nondegenerate when restricted to $\g^0$. Let $\p$  be the $(\cdot,\cdot)$-orthocomplement of $\g^0$ in $\g$.
 If $\mu\in \h^*$ is a $\g^0$-dominant integral weight we let $V(\mu)$ be the irreducible finite dimensional $\g^0$-module with highest weight $\mu$. Clearly we have $$V(\mu)=\bigotimes_{j=0}^tV_{\g^0_j}(\mu^{j}),$$ where $V_{\g^0_j}(\mu^{j})$ is a  irreducible $\g_j^0$-module.

 We denote by $\widehat{\g}=\C[t,t^{-1}]\otimes\g\oplus\C K\oplus\C d$ the untwisted affinization of $\g$ (see \cite[\S\ 7.2]{Kac}):  $d$ and $K$ denote, respectively, the scaling element and the canonical central element of $\widehat{\g}$.  If $x\in\g$ we set $x_{(n)}=t^n\otimes x\in\widehat{\g}$. Let $\widehat{\g^0}$ denote the subalgebra of $\ga$ generated by $\{x_{(n)}\mid x\in\g^0,\ n\in\ganz\}\cup\{d\}$. 

Let $\L_0\in(\h+\C K+\C d)^*$ be the weight such that $\L_0(K)=1$ and $\L_0(\h)=\L_0(d)=0$.  Fix $k\in\C$. We extend a  weight $\mu\in\h^*$ to $(\h+\C K+\C d)^*$ by setting $\mu(K)=\mu(d)=0$ and denote by $L_{\g} (k,\mu)$ the irreducible
highest weight $\widehat{\g}$-module with highest
weight $k\L_0+\mu$. Let $v_\mu$ be a highest weight vector in $L_{\g} (k,\mu)$. We denote by $\widetilde L_{\g^0} (k,\mu)$ the $\widehat{\g^0}$-submodule of $L_{\g} (k,\mu)$ generated by $v_\mu$ and by $L_{\g^0} (k,\mu)$ its irreducible quotient. When there is no chance of confusion we will drop $k$ from the notation denoting $L_{\g} (k,\mu)$, $\widetilde L_{\g^0} (k,\mu)$,  $L_{\g^0} (k,\mu)$ simply by $L_{\g} (\mu)$, $\widetilde L_{\g^0} (\mu)$,  $L_{\g^0} (\mu)$, respectively.

We let $V^k(\g)$,  $V_k(\g)$ denote, respectively, the universal and the simple affine vertex algebra (see \cite[\S\ 4.7 and Example 4.9b]{KacV}).
More generally, if  $\aa$ is  a reductive Lie algebra that decomposes as  $\aa=\aa_0\oplus\dots\oplus \aa_s$ with $\aa_0$ abelian and $\aa_i$ simple ideals for $i>0$ and $\mathbf{k}=(k_1,\ldots,k_s)$ is a multi-index of levels, we let
$$
V^{\mathbf{k}}(\aa)= V^{k_0}(\aa_0)\otimes\dots\otimes V^{k_s}(\aa_s),\quad
V_{\mathbf{k}}(\aa)= V_{k_0}(\aa_0)\otimes\dots\otimes V_{k_s}(\aa_s).
$$
We let $\vac$ denote the vacuum vector of both $V^{\mathbf{k}}(\aa)$ and $V_{\mathbf{k}}(\aa)$.

If $j>0$, let $\{x_i^j\},\{x_j^i\}$ be dual bases of $\aa_j$ with respect to the normalized invariant form of $\aa_j$ and $h^\vee_j$  its dual Coxeter number. For $\aa_0$, let $\{x_i^j\},\{x_j^i\}$ be dual bases with respect to any nondegenerate form and set $h_0^\vee=0$.\par
Assuming that $k_j+h_j^\vee\ne 0$ for all $j$, we consider  $V^{\mathbf{k}}(\aa)$ and all its quotients, including $V_{\mathbf{k}}(\aa)$, as  conformal vertex algebras with conformal vector $\omega_\aa$ given by the Sugawara construction:
$$\omega_\aa=\sum_{j=0}^s\frac{1}{2(k_j+h_j^\vee)}\sum_{i=1}^{\dim \aa_j} :x^j_i x_j^i :.
$$

Let  $V$ be a   vertex algebra. For  $a\in V$, we denote by $$Y(a,z)=\sum\limits_{n\in \ganz} a_{(n)}z^{-n-1}$$ the corresponding field. 
If $V$ admits a conformal vector $\omega$  then we write  the corresponding field as
$$Y(\omega,z)=\sum\limits_{n\in\ganz}\omega_nz^{-n-2}$$ (so that $\omega_{(n)}=\omega_{n-1}$). Recall that, by definition of conformal vector, $\omega_0$ acts semisimply on $V$. If $x$ is an eigenvector for $\omega_0$, then the corresponding eigenvalue $\D_x$ is called the conformal weight of $x$. 

Returning to our pair $(\g,\g^0)$, we let $\widetilde V_{k}(\g^0)$ denote the vertex subalgebra of $V_k(\g)$ generated by $x_{(-1)}\vac$, $x\in\g^0$. We choose $(\cdot,\cdot)_{|\g_0^0\times\g_0^0}$ as nondegenerate form on $\g_0^0$. Note that there is a uniquely determined multi-index $\mathbf{k}$ such that $\widetilde V_{k}(\g^0)$ is a quotient of $V^{\mathbf{k}}(\g^0)$ hence, if $k_j+h^\vee_j\ne 0$ for each $j$,  $\omega_{\g^0}$ is a conformal vector in $\widetilde V_{k}(\g^0)$. As an instance of Definition \ref{1}, we will say that $\widetilde V_{k}(\g^0)$ is conformally embedded in $V_{k}(\g)$ if 
$
\omega_\g=\omega_{\g^0}.
$
\subsection{Some general results on conformal embeddings}\label{AeP}
The basis of our investigation is the following result.
\begin{theorem}\cite[Theorem 1]{A} \label{thm-intro}
$\widetilde V_{k}(\g^0)$
is conformally embedded in
$V_k(\g)$ if and only for any $x\in\p$ we have
\bea (\omega_{\g^0})_0x_{(-1)}{\vac} = x_{(-1)} {\vac}.
\label{eqn-intro}\eea
\end{theorem}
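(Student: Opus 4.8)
The plan is to prove the two implications separately. Set $\nu=\omega_\g-\omega_{\g^0}\in V_k(\g)$, a vector of $\omega_\g$-conformal weight $2$, so that conformal embedding means precisely $\nu=0$. The forward implication is immediate: if $\nu=0$ then $(\omega_{\g^0})_0\,x_{(-1)}\vac=(\omega_\g)_0\,x_{(-1)}\vac=x_{(-1)}\vac$, since every current $x_{(-1)}\vac$ has $\omega_\g$-conformal weight $1$. Equivalently, \eqref{eqn-intro} is nothing but the statement $\nu_{(1)}(x_{(-1)}\vac)=0$ for $x\in\p$, because $(\omega_{\g^0})_0=(\omega_{\g^0})_{(1)}$ and $(\omega_\g)_{(1)}x_{(-1)}\vac=x_{(-1)}\vac$.

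For the converse I would first upgrade \eqref{eqn-intro} to the vanishing of all nonnegative modes of $\nu$ on the generating currents, i.e. $\nu_{(m)}(x_{(-1)}\vac)=0$ for every $m\ge1$ and every $x\in\g$. For $x\in\g^0$ this is clear, since $x_{(-1)}\vac$ is a weight-one primary vector for both $\omega_\g$ and $\omega_{\g^0}$. For $x\in\p$ the vector $x_{(-1)}\vac$ is still primary of weight one for $\omega_\g$, and a direct computation---commuting to the right the modes $y_{(n)}$ with $y\in\g^0$, $n\ge1$, which annihilate $x_{(-1)}\vac$---shows that it is primary for $\omega_{\g^0}$ as well; hence $\nu_{(m)}(x_{(-1)}\vac)=0$ for all $m\ge2$, while the remaining case $m=1$ is exactly \eqref{eqn-intro}.

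The crucial step is then to transfer this information from the modes of $\nu$ to the action of $\g$ on $\nu$. By skew-symmetry in $V_k(\g)$, writing $T$ for the translation operator,
\[
x_{(m)}\nu=(x_{(-1)}\vac)_{(m)}\,\nu=\sum_{j\ge0}\frac{(-1)^{m+j+1}}{j!}\,T^{j}\big(\nu_{(m+j)}(x_{(-1)}\vac)\big).
\]
For $m\ge1$ every summand carries an index $m+j\ge1$, so the previous step forces the right-hand side to vanish; thus $x_{(m)}\nu=0$ for all $x\in\g$ and all $m\ge1$, i.e. $\nu$ is annihilated by $\g_{(\ge1)}=\bigoplus_{n\ge1}t^n\otimes\g$.

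Finally I would invoke simplicity. As an $\ga$-module $V_k(\g)$ is the irreducible module $L_\g(k,0)$, whose only $\widehat{\mathfrak n}_+$-singular vectors are the scalar multiples of $\vac$, all of conformal weight $0$; here $\widehat{\mathfrak n}_+=\n\oplus\g_{(\ge1)}$, with $\n$ a maximal nilpotent subalgebra of $\g$ acting by zero modes. The subspace $\{v\in V_k(\g):\g_{(\ge1)}v=0\}$ is $\g_{(0)}$-stable and contains $\nu$; if $\nu\ne0$, the finite-dimensional $\g$-module $U(\g_{(0)})\nu$ would contain a $\g$-highest weight vector, which would be annihilated both by $\n$ and by $\g_{(\ge1)}$, hence $\widehat{\mathfrak n}_+$-singular of conformal weight $2$---contradicting irreducibility. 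Therefore $\nu=0$ and the embedding is conformal. The main obstacle is precisely this last passage: matching the two Virasoro actions on weight-one vectors does not by itself give $\nu=0$, and it is the combination of skew-symmetry with the absence of positive-weight singular vectors in the simple vacuum module that closes the argument.
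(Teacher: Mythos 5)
Your argument is correct, but note that the paper itself offers no proof to compare against: Theorem \ref{thm-intro} is quoted verbatim from \cite[Theorem 1]{A} (Adamovi\'c--Per\v{s}e), so the only benchmark is that cited source. Your proof is essentially the standard one and matches its circle of ideas: set $\nu=\omega_\g-\omega_{\g^0}$, observe that for $x\in\g^0$ the current $x_{(-1)}\vac$ is primary of weight one for both Sugawara vectors, while for $x\in\p$ it is annihilated by the positive modes of $\omega_{\g^0}$ (your sketch is right; the only points needed are $[\g^0,\p]\subseteq\p$ and $(\g^0,\p)=0$, the latter killing the level term arising when a zero mode acts before a positive mode), so that \eqref{eqn-intro} upgrades to $\nu_{(m)}x_{(-1)}\vac=0$ for all $m\ge1$, $x\in\g$; skew-symmetry then transfers this to $x_{(m)}\nu=0$ for $m\ge1$. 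Where you diverge slightly from the commutant-style ending of \cite{A} is the last step: instead of showing $\nu$ lies in the center of $V_k(\g)$ (which would also require handling the zero modes $x_{(0)}\nu$, i.e.\ knowing $(\omega_{\g^0})_{(0)}x_{(-1)}\vac=x_{(-2)}\vac$ for $x\in\p$), you only use annihilation by $\g\otimes t\C[t]$, pass to a $\g$-highest weight vector inside the finite-dimensional conformal-weight-$2$ subspace, and invoke the absence of $\widehat{\n}_+$-singular vectors of positive conformal weight in the irreducible module $L_\g(k,0)$. This is a clean and slightly more economical closing, and it correctly isolates the one place where simplicity of $V_k(\g)$ is indispensable (the statement genuinely fails in $V^k(\g)$).
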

The previous theorem has the following  useful reformulation. Remark that, in \cite{A},   it is assumed that $\g^0$ is a simple Lie algebra, but the arguments work in the reductive case as well. 

 Let $\p=\oplus_i V(\mu_i)$ be the decomposition of $\p$ as a $\g^0$-module.  Let  $( \cdot, \cdot)_0$ denote the normalized invariant
bilinear form on $\g^0$ (i.e. $( \cdot, \cdot)_0$ is the normalized invariant form when restricted to the simple ideals of $\g^0$ and, on $\g_0^0$, $(\cdot,\cdot)_0=(\cdot,\cdot)_{|\g_0^0\times \g_0^0}$).

\begin{cor}\label{numcheckk}$ \widetilde{V}_{k}(\g^0)$ is
conformally embedded in $V_k({\g})$ if and only if
\bea\label{numcheck} && \sum_{j=0}^t\frac{ ( \mu_{i}^j , \mu _{i}^j + 2 \rho_0^j ) _0}{ 2 (k_j + h_j
^{\vee} )} = 1  \eea
for any $i$.
\end{cor}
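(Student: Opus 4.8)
The starting point is Theorem \ref{thm-intro}: conformal embedding of $\widetilde V_k(\g^0)$ in $V_k(\g)$ is equivalent to the single family of conditions $(\omega_{\g^0})_0\,x_{(-1)}\vac = x_{(-1)}\vac$ for every $x\in\p$. Since $(\omega_{\g^0})_0$ is the zero mode $L_0^{\g^0}$ of the Sugawara field, the plan is simply to compute its action on the degree-one vectors $x_{(-1)}\vac$ and read off the eigenvalue. First I would write $L_0^{\g^0}$ in modes: from $\omega_{\g^0}=\sum_j\frac{1}{2(k_j+h_j^\vee)}\sum_i :x_i^j x_j^i:$ and the standard normal-ordered formula, $L_0^{\g^0}$ is a sum over the factors $\g_j^0$ of $\frac{1}{2(k_j+h_j^\vee)}$ times the corresponding quadratic expression in the currents. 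Applying this to $x_{(-1)}\vac$ and commuting annihilation modes to the right, I would use the affine bracket $[y_{(m)},z_{(n)}]=[y,z]_{(m+n)}+m\delta_{m+n,0}(y,z)K$ together with $z_{(p)}\vac=0$ for $p\ge 0$.

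The key simplification — and the step I expect to carry the whole argument — is that only two kinds of terms can survive the normal-ordering bookkeeping: a \emph{central} term proportional to $(x_i^j,x)K$, and a \emph{Casimir} term $\sum_i (x_j^i)_{(0)}(x_i^j)_{(0)}$ applied to $x$. Because $\p$ is by hypothesis the $(\cdot,\cdot)$-orthocomplement of $\g^0$ and $x\in\p$, every pairing $(x_i^j,x)$ of a basis vector of $\g_j^0$ against $x$ vanishes, so all central/anomaly contributions drop out. What remains is exactly the finite Casimir of $\g_j^0$ acting on $x$ through the adjoint $\g^0$-action on $\p$, giving
\[
L_0^{\g^0}\,x_{(-1)}\vac=\Big(\sum_{j=0}^t\frac{c_j(x)}{2(k_j+h_j^\vee)}\Big)\,x_{(-1)}\vac,
\]
where $c_j(x)$ is the eigenvalue of the $\g_j^0$-Casimir on the component of $\p$ containing $x$.

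Finally I would identify these scalars. If $x$ lies in the irreducible summand $V(\mu_i)=\bigotimes_j V_{\g_j^0}(\mu_i^j)$ of the decomposition $\p=\oplus_i V(\mu_i)$, then the Casimir of $\g_j^0$ acts on $V_{\g_j^0}(\mu_i^j)$ by the scalar $(\mu_i^j,\mu_i^j+2\rho_0^j)_0$ with respect to the normalized form $(\cdot,\cdot)_0$; for the central factor $j=0$ this reduces, using $h_0^\vee=0$ and $\rho_0^0=0$, to $(\mu_i^0,\mu_i^0)_0$, so the formula is uniform over all $j$. Hence $L_0^{\g^0}$ acts on the whole span of $\{x_{(-1)}\vac\mid x\in V(\mu_i)\}$ by the single scalar $\sum_{j=0}^t\frac{(\mu_i^j,\mu_i^j+2\rho_0^j)_0}{2(k_j+h_j^\vee)}$. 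The requirement of Theorem \ref{thm-intro} that this equal $1$ for every $x\in\p$ then becomes exactly \eqref{numcheck}, one equation for each index $i$, which proves the corollary.

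The only genuinely delicate points are the mode and normal-ordering accounting in evaluating $L_0^{\g^0}$, and the observation that the orthogonality $\p\perp\g^0$ annihilates the central term: it is precisely this vanishing that makes each $x_{(-1)}\vac$ an honest $L_0^{\g^0}$-eigenvector with the clean Casimir eigenvalue, rather than producing lower-order corrections. Everything else is a routine identification of the Casimir scalar with $(\mu_i^j,\mu_i^j+2\rho_0^j)_0$ and a uniform treatment of the abelian factor $j=0$.
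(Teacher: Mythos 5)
Your proof is correct and takes essentially the same route as the paper: the paper presents Corollary \ref{numcheckk} as a direct reformulation of Theorem \ref{thm-intro} (citing \cite{A}, with the remark that the argument extends to reductive $\g^0$), and that reformulation is precisely your computation of the Sugawara zero mode on $x_{(-1)}\vac$, where orthogonality of $\p$ and $\g^0$ kills the central terms and leaves the Casimir eigenvalue $\sum_{j}(\mu_i^j,\mu_i^j+2\rho_0^j)_0/\bigl(2(k_j+h_j^\vee)\bigr)$ on each summand $V(\mu_i)$.
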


We now discuss the special case when $\g^0$ is semisimple and it is the fixed
point subalgebra of an automorphism $\sigma$ of $\g$ of order $m$. Let $\xi$ be a
 primitive $m$-th root of unity. Since $\g^0$ is semisimple, the eigenspace associated to the
eigenvalue $\xi ^i$ (for $i=1, \ldots ,m-1$) is an irreducible $\g^0$-module $V(\mu_i)$ and $\p=\oplus_{i=1}^{m-1} V(\mu_i)$.  Set also $\mu_0=0$.

The automorphism $\sigma$ can be extended
to a finite order automorphism of the simple vertex  algebra
$V_{k} (\g)$ which admits the following decomposition
$$V_{k} (\g) = V_{k} (\g)^0 \oplus V_{k} (\g)^1 \oplus \cdots \oplus V_{k} (\g)^{m-1},$$
where
$$V_{k} (\g)  ^{i} = \{ v \in V_{k} (\g) \ \vert \ \sigma (v) = \xi ^i v \}. $$
Clearly $V_{k} (\g) ^{i}$ is a $\widehat{\g^0}$--module. 

%

\begin{theorem}\cite[Theorem 3]{A} \label{general}
Assume that
\bea
&&
V(\mu_i) \otimes V(\mu_j) = V(\mu_l) \oplus  \bigoplus_{r=1} ^{m_{i,j}} V (\nu_{r,i,j}) \label{uvj-1} ;\\
&&V_k(\g)^{l} \ \mbox{does not contain} \
\widehat{\g^0}-\mbox{primitive vectors of weight} \
\nu_{r,i,j}, \label{uvj-2} \\
&& ( \mbox{where} \ l = i + j \, \mod m \ ), \nonumber
 \eea
for all $i,j \in \{1, \dots, m-1\}$ and $r=1, \dots, m_{i,j}.$

Then
$$V_k(\g) =L_{\g^0}(0) \oplus L_{\g^0} ( \mu_1) \oplus \cdots \oplus L_{\g^0} (\mu_{m-1}). $$
\end{theorem}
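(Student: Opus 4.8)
The plan is to reduce everything to the claim that the only $\widehat{\g^0}$-primitive vectors in $V_k(\g)$ are the highest weight vectors $v_{\mu_0}=\vac,v_{\mu_1},\dots,v_{\mu_{m-1}}$, each occurring exactly once and lying at the bottom of its $\sigma$-eigenspace $V_k(\g)^i$. Granting this, the conclusion is immediate: in the conformal embedding under consideration every bottom component $V(\mu_i)$ sits at conformal weight $\D_{\mu_i}$ (equal to $1$ for $i\neq0$ by \eqref{eqn-intro}, and to $0$ for $i=0$), so it is the lowest conformal weight space of the positive-energy $\widehat{\g^0}$-module $V_k(\g)^i$; since $V(\mu_i)$ is irreducible its unique $\g^0$-highest weight vector is $v_{\mu_i}$, the only primitive vector of $V_k(\g)^i$; hence the sole composition factor of $V_k(\g)^i$ is $L_{\g^0}(\mu_i)$, with multiplicity one, and therefore $V_k(\g)^i\cong L_{\g^0}(\mu_i)$.

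To establish the uniqueness of primitive vectors I would argue by induction on the conformal weight. Two structural facts are used throughout: the grading is multiplicative, $a_{(n)}b\in V_k(\g)^{i+j\bmod m}$ for $a\in V_k(\g)^i$ and $b\in V_k(\g)^j$; and $V_k(\g)$ is generated as a vertex algebra by its conformal weight one subspace $\g=\bigoplus_i V(\mu_i)$ (with $V(\mu_0)=\g^0$). Since a mode $x_{(-n)}$ with $x$ of conformal weight one raises conformal weight by $n$, every vector of conformal weight $D$ is a sum of vectors $x_{(-n)}u$ with $x\in V(\mu_i)$, $n\geq1$, and $u\in V_k(\g)^{j}$ of conformal weight $D-n$. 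The base case, conformal weight $\leq1$, is direct: the weight one space is $\g$, whose $\g^0$-highest weight vectors are the $v_{\mu_i}$ together with the highest root vectors of the simple ideals of $\g^0$; the former are $\widehat{\g^0}$-singular and expected, while the latter already lie in $L_{\g^0}(0)=\widehat{\g^0}\vac$ and are not singular.

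For the inductive step, suppose $w\in V_k(\g)^l$ is a primitive vector of weight $\nu$ and conformal weight $D\geq2$, chosen of minimal conformal weight among those not on the list. By minimality and the reduction above, each $V_k(\g)^{j}$ coincides with $L_{\g^0}(\mu_j)$ below conformal weight $D$, so in the expansion $w=\sum_{i,n\geq1}x^{(i)}_{(-n)}u_{i,n}$ we may take $u_{i,n}\in L_{\g^0}(\mu_{l-i})$. As every weight of $V(\mu_i)$ is $\preceq\mu_i$ and every weight of $L_{\g^0}(\mu_{l-i})$ is $\preceq\mu_{l-i}$, the contributing indices give $\nu\preceq\mu_i+\mu_{l-i}$. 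The crux is to show that the highest weight content of $w$ is actually carried by the ``bottom times bottom'' products $x_{(-(D-1))}y$ with $x\in V(\mu_i)$, $y\in V(\mu_{l-i})$; one does this by repeatedly commuting the $\widehat{\g^0}$-factors of $u_{i,n}$ past the mode $x^{(i)}_{(-n)}$ through the affine relations and invoking that $w$ is $\widehat{\g^0}$-singular. This forces $\nu$ to be the highest weight of a constituent of some $V(\mu_i)\otimes V(\mu_{l-i})$, and \eqref{uvj-1} enumerates these constituents as $\mu_l$ and the $\nu_{r,i,l-i}$. Since $\mu_l$ occurs with multiplicity one and is already realized by $v_{\mu_l}$, a genuinely new primitive vector must have some weight $\nu_{r,i,j}$, which \eqref{uvj-2} forbids in $V_k(\g)^l$. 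This contradiction closes the induction.

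The main obstacle is exactly this ``crux'': reducing the highest weight content of an arbitrary, possibly very deep, primitive vector to the conformal weight two products governed by the finite tensor product decomposition \eqref{uvj-1}. The conformal embedding hypothesis is what makes this feasible, since it places each bottom component $V(\mu_j)$ at conformal weight one and thereby pins the ``bottom times bottom'' products at conformal weight two; the careful bookkeeping required to absorb the $\widehat{\g^0}$-factors, and to exclude a spurious reappearance of the weight $\mu_l$ higher up in the module, is the technical heart of the proof. Once the weight of every primitive vector is confined to $\{\mu_l\}\cup\{\nu_{r,i,j}\}$, hypotheses \eqref{uvj-1}--\eqref{uvj-2} finish the argument mechanically.
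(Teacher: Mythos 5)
Your overall strategy---reduce the theorem to showing that the only $\widehat{\g^0}$-primitive vectors in $V_k(\g)$ are $\vac,v_{\mu_1},\dots,v_{\mu_{m-1}}$---is the right shape, and it is close in spirit to what \cite{A} and the paper's proof of the analogous Theorem \ref{generalhs} actually do. But your proposal has a genuine gap at precisely the point you call the ``crux'': the claim that the highest-weight content of a primitive vector of conformal weight $D\ge 2$ is carried by ``bottom times bottom'' products, so that its $\h$-weight must be a highest weight of a constituent of some $V(\mu_i)\otimes V(\mu_{l-i})$. You do not prove this; ``repeatedly commuting the $\widehat{\g^0}$-factors past the mode'' is not an argument, and the one concrete step you offer in its support is false: it is not true that every $\h$-weight of the affine module $L_{\g^0}(\mu_{l-i})$ is $\preceq\mu_{l-i}$. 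Weights of an affine highest weight module are bounded above only in the \emph{affine} dominance order, i.e.\ modulo the simple root $\a_0=\d-\theta$, so the finite components of the weights can grow in the $\theta$-direction; already in $L_{\g}(k,0)=V_k(\g)$ the vector $(X_\theta)_{(-1)}\vac$ has finite weight $\theta\not\preceq 0$. Hence the inequality $\nu\preceq\mu_i+\mu_{l-i}$, on which your contradiction rests, does not follow from your setup, and the induction collapses at its key step.

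What fills this hole in \cite{A} (and in the paper's proof of Theorem \ref{generalhs}) is a fusion-rules argument on \emph{module products}, not mode-by-mode bookkeeping: one considers the $\widehat{\g^0}$-submodules $U_i$ generated by the bottom components $V(\mu_i)_{(-1)}\vac$ and uses the theory of intertwining operators among highest weight modules (Frenkel--Zhu type results) to show that any primitive vector in a product $U_i\cdot U_j$, in the sense of \eqref{fp}, has weight among the highest weights of the constituents of $V(\mu_i)\otimes V(\mu_j)$; hypothesis \eqref{uvj-2} then excludes all constituents except $\mu_{i+j}$, the associativity of the product (see \cite[Remark 7.6]{BK2}) propagates this to iterated products, and the simplicity of $V_k(\g)$, which is generated by its conformal weight one subspace $\bigoplus_i V(\mu_i)$, forces $V_k(\g)^l=U_l$ with each $U_l$ irreducible. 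Two further points in your write-up need care even granting the crux: first, \eqref{uvj-2} concerns \emph{primitive} vectors, not merely singular ones, and your induction only invokes singularity of $w$, which by itself cannot rule out non-split extensions; second, a ``spurious'' primitive vector of weight $\mu_l$ at conformal weight $\ge 2$ is not forbidden by \eqref{uvj-2} at all---it is excluded by conformality, since the Sugawara eigenvalue of a $\widehat{\g^0}$-singular vector of weight $\mu_l$ equals $\Delta_{\mu_l}\le 1$ and $\omega_{\g^0}=\omega_\g$ makes this its conformal weight. That argument needs to be said explicitly, and it is another place where the conformal embedding hypothesis, rather than combinatorial bookkeeping, does the work.
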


The next result is a version of Theorem \ref{general} suitable for the case when $\g^0$ is  a reductive equal rank non-semisimple subalgebra of $\g$. We also assume here that the embedding $\g^0\subset \g$ is maximal.  In this case  $\dim\g^0_0=1$, and $\g^0$ is the fixed point set of an automorphism of $\g$ of order $2$.
We can also choose $\Dp$ so that the set of simple roots of $\g^0$ is obtained from the set of simple roots of $\g$ by dropping one simple root $\a_p$ having coefficient $1$ in the simple roots expansion of the highest  root  $\theta$ of $\g$. 

Moreover, as a $\g^0$-module, $\p=V(\mu_1)\oplus V(\mu_2)$ with $\mu_1=\theta$ and $\mu_2=-\a_p$. In particular, $\mu_{1}^0=\theta_{|\g^0_0}$, and $\mu_{2}^0=-(\alpha_p)_{|\g^0_0}$. Since $(\cdot,\cdot)$ is nondegenerate on $\p$ and it is also $\g^0$-invariant, we have that $V(-\a_p)=V(\theta)^*$. In particular the trivial representation occurs in $V(\theta)\otimes V(-\a_p)$.
Thus we can write
\begin{equation}
V(\theta) \otimes V(-\a_p) = \C \oplus  \bigoplus_{r=1} ^{s} V (\nu_{r}) \label{uvj-1hs}.
\end{equation}
with $\nu_i\ne 0$.
Let $\varpi$ be the
element of $\h$ such that $\a_i(\varpi)=\delta_{ip}$. Then $\g^0_0=\C\varpi$.  Let $\zeta\in(\g^0_0)^*$ be defined by setting $\zeta(\varpi)=1$, so that 
\begin{equation}\label{mu12}
\mu_{1}^0=\zeta,\quad\mu_{2}^0=-\zeta.
\end{equation}
If $q\in\ganz$, let $V_k(\g)^{(q)}$ be the eigenspace for the action of $\varpi_{(0)}$ on $V_k(\g)$ relative to the eigenvalue $q$. 

If $A$, $B$ are subspaces of a vertex algebra $V$, we set   
\begin{equation}\label{fp}
A\cdot B = span \{a_{(n)}b\mid a\in A, b\in B, n\in\ganz\}.
\end{equation}

\begin{theorem}\label{generalhs}
Assume $k\ne 0$ and that
\bea
&&
V(\theta) \otimes V(-\a_p) = \C \oplus  \bigoplus_{r=1} ^{s} V (\nu_{r}) \label{uvj-hs}; \\
&&V_k(\g)^{(0)} \ \mbox{does not contain} \
\widehat{\g^0}-\mbox{primitive vectors of weight} \
\nu_{r}, \label{uvj-2hs} \\
&& ( \mbox{where} \ r= 1,\ldots,s ), \nonumber
 \eea

Then $\widetilde V_k(\g^0)\cong V_\mathbf{k}(\g^0)$ and as a $\widehat{\g^0}$-module,
\begin{equation}\label{eigenweight}
V_k(\g)^{(q)}=\begin{cases}L_{\g^0}(q\theta)&\text{if $q\ge 0$,}\\
L_{\g^0}(q\a_p)&\text{if $q\le 0$.}
\end{cases}
\end{equation}
In particular, 
\begin{equation}\label{efd}
V_k(\g) =L_{\g^0}(0)\oplus(\sum_{q>0} L_{\g^0}(q\theta) )\oplus(\sum_{q>0} L_{\g^0} ( -q\a_p).
\end{equation}
\end{theorem}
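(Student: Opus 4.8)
The plan is to organize $V_k(\g)$ by the $\ZZ$-grading coming from the semisimple element $\varpi$. First I would set $V^{(q)}:=V_k(\g)^{(q)}$ and record that each $V^{(q)}$ is a $\widehat{\g^0}$-submodule, that $V^{(i)}\cdot V^{(j)}\subseteq V^{(i+j)}$, and that the maximality hypothesis yields the three-term grading $\g=\g^{(-1)}\oplus\g^{(0)}\oplus\g^{(1)}$ with $\g^{(0)}=\g^0$, $\g^{(1)}\cong V(\theta)$ and $\g^{(-1)}\cong V(-\a_p)$ (this uses $\theta(\varpi)=1$, since $\a_p$ has coefficient $1$ in $\theta$). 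Because $V_k(\g)$ is strongly generated by $\{x_{(-1)}\vac\mid x\in\g\}$ and every such generator has $\varpi$-degree in $\{-1,0,1\}$, a reordering of monomials (inducting on the number of factors and absorbing the shorter commutator corrections) gives the generation statement $V^{(q)}=\g^{(1)}\cdot V^{(q-1)}$ for $q\ge 1$ and $V^{(q)}=\g^{(-1)}\cdot V^{(q+1)}$ for $q\le -1$.

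Next I would produce the lower bounds. Throughout we are in the conformally embedded situation $\omega_\g=\omega_{\g^0}$, so by \eqref{eqn-intro} the fields of $\p$ have conformal weight $1$. Then $(X_\theta)_{(-1)}^{\,q}\vac$ is a nonzero $\widehat{\g^0}$-singular vector of $\h$-weight $q\theta$ and conformal weight $q$; since $q\theta$ is $\g^0$-dominant it generates a copy of $L_{\g^0}(q\theta)$ inside $V^{(q)}$ for every $q\ge 0$, and applying the same construction to the highest weight vector of $\g^{(-1)}\cong V(-\a_p)$ gives $L_{\g^0}(q\a_p)\subseteq V^{(q)}$ for $q\le 0$. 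The generation statement also shows that the minimal conformal weight occurring in $V^{(q)}$ equals $|q|$ and is realized precisely on the top of this singular vector, since reaching $\varpi$-degree $q$ forces at least $|q|$ generating factors, each of conformal weight $1$.

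The reverse inclusions, equivalent to the absence of further $\widehat{\g^0}$-primitive vectors, are the crux. I would first treat $V^{(0)}$: it is a vertex subalgebra containing $\widetilde V_k(\g^0)$, and a $\widehat{\g^0}$-primitive vector of minimal positive conformal weight in it is produced, by the generating fields, from the operator product of a field of $\g^{(1)}$ with a field of $\g^{(-1)}$; its weight therefore lies among the summands of $V(\theta)\otimes V(-\a_p)=\C\oplus\bigoplus_{r}V(\nu_r)$ from \eqref{uvj-hs}, so it equals some $\nu_r$, contradicting \eqref{uvj-2hs}. Hence $V^{(0)}=L_{\g^0}(0)$. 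Since $\widetilde V_k(\g^0)$ is the $\widehat{\g^0}$-submodule generated by $\vac$, the inclusion $\widetilde V_k(\g^0)\subseteq V^{(0)}=L_{\g^0}(0)$ forces $\widetilde V_k(\g^0)=L_{\g^0}(0)$, which is simple; thus $\widetilde V_k(\g^0)\cong V_{\mathbf k}(\g^0)$, proving the first assertion.

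The main obstacle is to propagate this single tensor-product hypothesis to all degrees $q\neq 0$: the naive attempt to descend a primitive vector of $V^{(q)}$ to $V^{(q-1)}$ by a degree $-1$ field fails, because the correction terms involve modes of $\p$, which need not annihilate a vector that is only $\widehat{\g^0}$-singular. The plan is instead to establish that $V^{(1)}=L_{\g^0}(\theta)$ and $V^{(-1)}=L_{\g^0}(-\a_p)$ are simple currents: here the assumption $k\neq 0$ enters, guaranteeing $\vac\in V^{(1)}\cdot V^{(-1)}$ through $(x_{(-1)}\vac)_{(1)}(y_{(-1)}\vac)=k(x,y)\vac$, so that $V^{(1)}\cdot V^{(-1)}=V^{(0)}=L_{\g^0}(0)$; this is exactly the statement that their fusion is the identity object, and from it one deduces, via the surjections $\g^{(1)}\cdot V^{(q-1)}\twoheadrightarrow V^{(q)}$ of the first paragraph, that the fusion $L_{\g^0}(\theta)\cdot L_{\g^0}((q-1)\theta)$ is the single irreducible $L_{\g^0}(q\theta)$. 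Inducting on $q$ then gives $V^{(q)}=L_{\g^0}(q\theta)$ for $q\ge 0$, and symmetrically $V^{(q)}=L_{\g^0}(q\a_p)$ for $q\le 0$, which is \eqref{eigenweight}; summing over $q$ yields \eqref{efd}. I expect this simple-current step to require the most care, as it is where the leanness of the hypothesis (one tensor product rather than one per pair of degrees) must be paid for.
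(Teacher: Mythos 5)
Your setup (the $\varpi_{(0)}$-grading, $V^{(i)}\cdot V^{(j)}\subseteq V^{(i+j)}$, the generation of $V^{(q)}$ from degree $\pm 1$ fields) and your treatment of $V^{(0)}$ are in the spirit of the paper's proof, but both halves of your argument for the degrees $q\neq 0$ have genuine gaps. The first is that you assert, without proof, that $(X_\theta)_{(-1)}^q\vac$ is a \emph{nonzero} $\widehat{\g^0}$-singular vector of $V_k(\g)$, and that the minimal conformal weight of $V_k(\g)^{(q)}$ equals $|q|$. In the simple quotient neither statement is automatic: powers of $(X_\theta)_{(-1)}$ applied to $\vac$ do vanish at positive integral levels (already $(X_\theta)_{(-1)}^2\vac=0$ at $k=1$, a conformal level for $gl(n)\subset sl(n+1)$ at which $V_1(\g)^{(0)}$ is irreducible, so the stated hypotheses hold), and in that case the minimal conformal weight of $V^{(q)}$ is strictly larger than $|q|$. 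Your generation argument only gives the inequality $\ge |q|$; equality \emph{is} the nonvanishing you assumed. This is precisely where the paper spends the second half of its proof: singularity is checked by explicit mode computations, and nonvanishing is proved by induction from $(X_{-\theta})_{(1)}(X_\theta)_{(-1)}^q\vac=c\,(X_\theta)_{(-1)}^{q-1}\vac$, where the constant $c$ can vanish only when $k$ is a non-negative integer; those values are excluded by the hypothesis $k\ne 0$ together with the fact \cite{AGO} that a conformal embedding into an integrable $V_k(\g)$ forces $k=1$.

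The second gap is the simple-current induction for the reverse inclusions. The paper's route is different and avoids fusion-categorical machinery entirely: after showing $A^+\cdot A^-\subseteq \widetilde V_k(\g^0)$ (the rigorous version of your OPE remark, for the $\widehat{\g^0}$-modules $A^\pm$ generated by $\g^{(\pm 1)}_{(-1)}\vac$), it uses balanced iterated products and associativity of the product \eqref{fp} (\cite{BK2}) to get $V^{(0)}=\widetilde V_k(\g^0)$, and then invokes simplicity of the ambient algebra $V_k(\g)$ to conclude that \emph{every} eigenspace $V^{(q)}$ is a simple $V_{\mathbf k}(\g^0)$-module; a single nonzero singular vector of weight $q\theta$ then identifies $V^{(q)}$ with $L_{\g^0}(q\theta)$. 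Your induction, by contrast, (i) presupposes $V^{(\pm 1)}=L_{\g^0}(\theta),\,L_{\g^0}(-\a_p)$, which you never establish (it requires exactly the simplicity-of-eigenspaces argument above, or an equivalent); (ii) conflates the equality $V^{(1)}\cdot V^{(-1)}=L_{\g^0}(0)$ \emph{inside} $V_k(\g)$ with the statement that the abstract fusion product is the identity object --- composition factors of a fusion product can die in the map to $V_k(\g)$, and the simple-current formalism you want to cite is not available off the shelf for these non-rational, negative-level affine vertex algebras; and (iii) even granting a simple-current property, the identification of the product of $L_{\g^0}(\theta)$ with $L_{\g^0}((q-1)\theta)$ as $L_{\g^0}(q\theta)$ does not follow, because the Cartan component of the top-level tensor product can die in the product --- this is exactly what happens at $k=1$, where $(X_\theta)_{(-1)}^2\vac=0$. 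Closing step (iii) would force you back to the explicit singular-vector computation that was omitted in the first place, so the proposal cannot be completed as written.
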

\begin{proof}
Let $A^+$, $A^-$  be the $\widehat{\g^0}$-submodules of $V_k(\g)$ generated by $V(\theta)_{(-1)}\vac$, $V(-\a_p)_{(-1)}\vac$ respectively. 

Then a fusion rules argument shows that  a primitive  vector in $A^+\cdot A^-$ must have weight $0$ or $\nu_r$ for some $r$.  By our hypothesis it has weight $0$ so, since the embedding is conformal, it has conformal weight $0$. Since the only vector of conformal weight $0$ in $V_k(\g)$ is $\vac$, we see that 
\begin{equation}\label{fusiontrivial}
A^+\cdot A^-\subset\widetilde V_k(\g^0).
\end{equation}

It is clear that $V_k(\g)^{(0)}$ is contained in the sum of all products 
of type $A_1\cdot(A_2\cdot(\cdots\cdot A_r))\cdots)$ with $A_i\in\{A^+,A^-,\widetilde V_k(\g^0)\}$ such that
$$
\sharp\{i\mid A_i=A^+\}=\sharp\{i\mid A_i=A^-\}.
$$
 By the associativity of the $\cdot$ product  \eqref{fp} (see \cite[Remark 7.6]{BK2}) we see that \eqref{fusiontrivial} implies that $A_1\cdot(A_2\cdot(\cdots\cdot A_r))\cdots)\subset\widetilde V_k(\g^0)$, so $V_k(\g)^{(0)}=\widetilde V_k(\g^0)$. 
It follows that $\widetilde V_k(\g^0)$  is simple, hence isomorphic to $V_\mathbf{k}(\g^0)$,  and $V_k(\g)^{(q)}$ is a simple $V_\mathbf{k}(\g^0)$-module for all $q$.

It remains to prove relation \eqref{eigenweight}. To do this, we check  that $({X_\theta}_{(-1)})^q\vac$ and $({X_{-\a_p}}_{(-1)})^q\vac$ are nonzero singular vectors in $V_k(\g)$ for all $q\in\nat$. 
We first verify that they are singular for $\widehat{\g^0}$ and then that they are nonzero.
It is easy to see that
\begin{equation}\label{123}
\varpi_{(i)}({X_\theta}_{(-1)})^q\vac=0,\quad\varpi_{(i)}({X_{-\a_p}}_{(-1)})^q\vac=0
\end{equation}
for all $i>0$. E.g., using relation 
\begin{align*}
\varpi_{(i)}({X_\theta}_{(-1)})^q\vac&={X_\theta}_{(i-1)}({X_\theta}_{(-1)})^{q-1}\vac+{X_\theta}_{(-1)}\varpi_{(i)}({X_\theta}_{(-1)})^{q-1}\vac\\
&={X_\theta}_{(-1)}\varpi_{(i)}({X_\theta}_{(-1)})^{q-1}\vac,
\end{align*}
an obvious induction gives the leftmost formula in \eqref{123}.\par
It is also clear that, if $\a_i\ne\a_p$, then
$$
{X_{\a_i}}_{(0)}({X_\theta}_{(-1)})^q\vac={X_{\a_i}}_{(0)}({X_{-\a_p}}_{(-1)})^q\vac=0.
$$
If $\be$ is the highest root of a simple ideal of $\g^0$ and $\theta-\be$ is not a root then ${X_{-\be}}_{(1)}({X_\theta}_{(-1)})^q\vac=0$, while, if $\gamma=\theta-\beta\in\Dp$ then ${X_{-\be}}_{(1)}({X_\theta}_{(-1)})^q\vac=({X_\gamma}_{(0)})({X_\theta}_{(-1)})^{q-1}\vac=0$. 

Recall that $\a_p$ has coefficient $1$ in the expansion of $\theta$ in terms of simple roots. Hence $\beta+2\a_p\notin \D$ and the same argument as above shows that
$
{X_{-\be}}_{(1)}({X_{-\a_p}}_{(-1)})^q\vac=0$.\par  We now prove   by induction on $q$ that  $({X_\theta}_{(-1)})^q\vac$ and $({X_{-\a_p}}_{(-1)})^q\vac$ are nonzero, the base $q=0$ being obvious. \par
Assume by induction that $({X_\theta}_{(-1)})^{q-1}\vac\ne 0$. Then, since
\begin{align*}
{X_{-\theta}}_{(1)}({X_\theta}_{(-1)})^q\vac&=-{h_\theta}_{(0)}({X_\theta}_{(-1)})^{q-1}\vac+k({X_\theta}_{(-1)})^{q-1}\vac\\&=(-2(q-1)+k)({X_\theta}_{(-1)})^{q-1}\vac,
\end{align*}
we have that $({X_\theta}_{(-1)})^q\vac$ can be $0$ only if $k=2(q-1)$. An embedding in an integrable module can be conformal only when $k=1$
(see e.g. \cite{AGO}), thus $({X_\theta}_{(-1)})^q\vac\ne 0$. Computing ${x_{\a_p}}_{(1)}({X_{-\a_p}}_{(-1)})^q\vac$, we see likewise that $({X_{-\a_p}}_{(-1)})^q\vac\ne 0$.
\end{proof}
 \begin{rem}\label{conditionsfinite}
 Condition \eqref{uvj-2} holds whenever 
\bea\label{finiote}
 \sum_{u=0}^t\frac{ ( \nu_{r,i,j}^u , \nu _{r,i,j}^u + 2 \rho_{0}^u ) _0}{ 2 (k_u+ h^\vee _{u})}\not\in \ganz_+,
\eea
for any $i,j$, while  condition \eqref{uvj-2hs} holds whenever 
\bea\label{finioths}
 \sum_{u=0}^t\frac{ ( \nu_{r}^u , \nu _{r}^u + 2 \rho_{0}^u ) _0}{ 2 (k_u+ h^\vee _{u})}\not\in \ganz_+
\eea
for any $r$.
 \end{rem}
 \subsection{Dynkin indices and combinatorial formulas}
We now review some results by Panyushev \cite{PLMS}, which will be used in the proof of Theorem \ref{main}.
Recall that  $\g$ is simple and $\g^0$ is a reductive equal rank subalgebra of $\g$ with decomposition as in \eqref{decompg0}.
 We denote by $\kappa(\cdot,\cdot)$ the Killing form of $\g$. Recall that  $\kappa(\cdot,\cdot)=2h^\vee\,(\cdot,\cdot)$, hence, denoting with same symbol the bilinear forms induced on $\h^*$, we have 
 \begin{equation}\label{cx}
 \kappa(\l,\mu)=\frac{1}{2h^\vee}\,(\l,\mu)\ \ \forall\,\l,\mu\in\h^*.
 \end{equation}
   Set $d_0=1$ and for $j>0$
\begin{equation}\label{d}
d_j={\frac{2}{(\theta_j,\theta_j)}},
\end{equation} 
where $\theta_j$ is the highest root   of $\g^0_j$.

 If $V$ is  a finite-dimensional $\g$-module and $x\in\g$,  let $\pi(x)\in End(V)$ denote the action of $x$ on $V$. The trace form $(x,y)_V:=tr(\pi(x)\pi(y))$ is an invariant form on $\g$, hence there is $d_V\in\C$ such that $d_V(\cdot,\cdot)_V=\kappa(\cdot,\cdot)$. The number $d_V$ is called the Dynkin index of $V$ and denoted by $ind_\g(V)$.
 The Dynkin index is clearly additive with respect to direct sums and, if $V_\l$ is the irreducible finite-dimensional $\g$-module with highest weight $\l$, then
  \begin{equation}\label{ind}
 ind_\g(V_\l)=\frac{\dim V_\l}{\dim \g}\,\frac{(\l,\l+2\rho)}{(\theta,\theta+2\rho)}.
 \end{equation}
We write \eqref{ind} in terms of the normalized invariant form, but the formula is clearly independent from the choice of the form. Let $C_\g$ denote the Casimir element of $\kappa$ and  $C_{\g^0_j}$ the Casimir element of $\kappa_{|\g^0_j\times \g^0_j}$.

  \begin{prop}\cite[Proposition 2.2]{PLMS}�\label{p}\
\begin{enumerate} 
\item The eigenvalue of $C_{\g^0_j}$ on $\g^0_j$ is $\frac{h_j^\vee}{d_jh^\vee}$. (Recall that $h_0^\vee=0$.)
\item $ind_{\g^0_j}(\p)=\frac{d_jh^\vee}{h_j^\vee}-1,\,\,j>0$.
\end{enumerate}
\end{prop}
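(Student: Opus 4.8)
The plan is to reduce both assertions to a single normalization lemma comparing the restriction of the Killing form $\kappa$ of $\g$ to the simple ideal $\g^0_j$ with the intrinsic Killing form $\kappa_{\g^0_j}$ of $\g^0_j$. Writing $\kappa_{\g^0_j}=2h_j^\vee(\cdot,\cdot)_0$, where $(\cdot,\cdot)_0$ is the normalized form of $\g^0$ (whose restriction to $\g^0_j$ is the normalized form of $\g^0_j$), I claim that for $j>0$
\[
\kappa_{|\g^0_j\times\g^0_j}=\frac{d_j h^\vee}{h_j^\vee}\,\kappa_{\g^0_j},
\qquad\text{equivalently}\qquad
(\cdot,\cdot)_{|\g^0_j\times\g^0_j}=d_j\,(\cdot,\cdot)_0 ,
\]
the equivalence coming from $\kappa=2h^\vee(\cdot,\cdot)$ and the displayed form of $\kappa_{\g^0_j}$. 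Since $\g^0_j$ is simple, any two invariant forms on it are proportional, so $(\cdot,\cdot)_{|\g^0_j\times\g^0_j}=c\,(\cdot,\cdot)_0$ for a single scalar $c$, and the whole content of the lemma is the computation $c=d_j$.

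To evaluate $c$ I would pass to the induced forms on the dual. Because $\g^0_j$ is an ideal, its Cartan $\h_j=\h\cap\g^0_j$ is $(\cdot,\cdot)$-orthogonal to all the remaining Cartan directions and every root of $\g^0_j$ vanishes on them; hence the form induced on $\h_j^*$ by $(\cdot,\cdot)_{|\g^0_j\times\g^0_j}$ is exactly the restriction to $\h_j^*$ of the form that $(\cdot,\cdot)$ induces on $\h^*$. Passing to the dual inverts the proportionality constant, so on $\h_j^*$ this induced form equals $\tfrac1c$ times the form induced by $(\cdot,\cdot)_0$. Evaluating at the highest root $\theta_j$ of $\g^0_j$ and using $(\theta_j,\theta_j)_0=2$ together with the definition $d_j=2/(\theta_j,\theta_j)$ gives $2/d_j=(\theta_j,\theta_j)=\tfrac1c\,(\theta_j,\theta_j)_0=2/c$, whence $c=d_j$.

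Granting the lemma, part (1) is immediate: $C_{\g^0_j}$ is by definition the Casimir of $\kappa_{|\g^0_j\times\g^0_j}$, and rescaling an invariant form by a constant rescales the associated Casimir by the reciprocal constant; since the Casimir of the Killing form $\kappa_{\g^0_j}$ acts as the identity on the adjoint module $\g^0_j$, the lemma yields eigenvalue $h_j^\vee/(d_j h^\vee)$ (for $j=0$ the center acts trivially and both sides vanish, consistently with $h_0^\vee=0$). For part (2), view $\g$ as a $\g^0_j$-module: using $\g=\g^0\oplus\p$ and the decomposition \eqref{decompg0}, one has $\g=\g^0_j\oplus\p\oplus\bigl(\bigoplus_{i\ne j}\g^0_i\bigr)\oplus\g^0_0$, with $\g^0_j$ acting by the adjoint on itself and trivially on the other summands of $\g^0$. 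The Dynkin index is additive over direct sums and vanishes on trivial modules, and the adjoint has index $1$ (cf. \eqref{ind}), so $ind_{\g^0_j}(\g)=1+ind_{\g^0_j}(\p)$. On the other hand the trace form of $\g$ as a $\g^0_j$-module is $x,y\mapsto tr_\g(\ad x\,\ad y)=\kappa_{|\g^0_j\times\g^0_j}$, so by the defining property of the Dynkin index $ind_{\g^0_j}(\g)=\kappa_{|\g^0_j\times\g^0_j}/\kappa_{\g^0_j}=d_j h^\vee/h_j^\vee$ by the lemma; subtracting $1$ gives $ind_{\g^0_j}(\p)=d_j h^\vee/h_j^\vee-1$.

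The only delicate point, and the step I would write out most carefully, is the normalization bookkeeping inside the lemma: specifically the reciprocation that occurs when one restricts the invariant form to $\g^0_j$ and then dualizes in order to read off root lengths, and the verification that the ideal structure makes ``restrict, then induce on the dual'' agree with ``induce on the dual, then restrict''. Everything downstream (rescaling of Casimirs, additivity and vanishing of the Dynkin index, and the values $1$ for the Casimir and the index on the adjoint) is standard once the lemma is in hand.
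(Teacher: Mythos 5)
Your proof is correct, but there is nothing in the paper to compare it against line by line: the paper states Proposition \ref{p} as a quotation of \cite[Proposition 2.2]{PLMS} and gives no proof at all. Your argument is the natural self-contained one (and in substance the same as Panyushev's): everything is reduced to the normalization lemma $(\cdot,\cdot)_{|\g^0_j\times\g^0_j}=d_j\,(\cdot,\cdot)_0$, equivalently $\kappa_{|\g^0_j\times\g^0_j}=\tfrac{d_jh^\vee}{h_j^\vee}\,\kappa_{\g^0_j}$, whose proof you carry out correctly -- the reciprocation of the proportionality constant under dualization is right, and the interchange of ``restrict'' and ``dualize'' is legitimate precisely because the ideals of $\g^0$ are mutually $(\cdot,\cdot)$-orthogonal and the roots of $\g^0_j$ vanish on the complementary part of $\h$. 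Part (1) then follows from the $1/c$ rescaling of Casimirs and the eigenvalue $1$ of the Casimir of the Killing form on the adjoint module, and part (2) from additivity of the index over $\g=\g^0_j\oplus\p\oplus(\text{trivial summands})$ together with the identification of the trace form of $\g$ as a $\g^0_j$-module with $\kappa_{|\g^0_j\times\g^0_j}$. One caveat you should make explicit: you take the index of $V$ to be the ratio of the trace form of $V$ to the Killing form, whereas the paper's displayed definition reads $d_V(\cdot,\cdot)_V=\kappa(\cdot,\cdot)$, which is the reciprocal. The displayed definition is an internal inconsistency of the paper: it is reciprocal to \eqref{ind} (for the defining representation of $sl(2)$, formula \eqref{ind} gives $1/4$ while the displayed definition gives $4$), and under it part (2) of the proposition could not hold, since a reciprocal of an additive quantity is not additive. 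Your convention is the one the paper actually uses (for instance in the proof of Theorem \ref{main}, where \eqref{ind} and additivity of the index are combined with Proposition \ref{p}(2)), so a single sentence at the start of your proof fixing the convention via \eqref{ind} would remove the only ambiguity.
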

 \begin{prop}\cite[Corollary 2.7]{PLMS}\label{eigen}� Assume that $\g^0$ is semisimple and that it is the fixed-point  set of an automorphism of $\g$ of prime order $m$. Then $C_{\g^0}$ acts scalarly on $\p$
 with eigenvalue $1/m$.
 Also, if $m=2$, the above statement holds with $\g^0$ reductive.\end{prop}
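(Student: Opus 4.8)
\emph{The plan} is to reduce the statement to a single root–theoretic identity, settle the ``averaged'' form of it with the material already at hand, and isolate the constancy of the eigenvalue across the $\p_i$ as the real point.

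First I would reduce. Each eigenspace $\p_i=V(\mu_i)$ is an irreducible $\g^0$–module, so $C_{\g^0}=\sum_{j\ge0}C_{\g^0_j}$ acts on it by a scalar $c_i$, and it suffices to prove $c_i=1/m$ for every $i$. Evaluating $C_{\g^0}$ on a $\g^0$–highest weight vector of $\p_i$ by Freudenthal's formula, and using that the form induced by $\kappa_{|\g^0}$ on $\h$ is simply $\kappa_{|\h}$ — hence equals $\tfrac1{2h^\vee}(\cdot,\cdot)$ on $\h^*$ by \eqref{cx} — gives the closed expression
\[
c_i=\frac{1}{2h^\vee}\,\Big(\mu_i,\ \mu_i+2\sum_{j}\rho_0^j\Big).
\]
Thus the proposition is equivalent to the identity $\big(\mu_i,\mu_i+2\sum_j\rho_0^j\big)=2h^\vee/m$ for each $i$, a purely combinatorial assertion about the root system.

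Next the averaged statement. Taking the trace of $C_{\g^0}$ on the adjoint module $\g$ and using $\sum_a\kappa(u_a,u^a)=\dim\g^0$ for dual bases of $\g^0$ gives $\mathrm{tr}_\g(C_{\g^0})=\dim\g^0$; subtracting the contributions of the adjoint summands $\g^0_j$, whose $C_{\g^0}$–eigenvalue is $h^\vee_j/(d_jh^\vee)$ by Proposition \ref{p}(1), and invoking the Dynkin indices of Proposition \ref{p}(2), one obtains $\sum_{i=1}^{m-1}c_i\dim\p_i=\tfrac1m\dim\p$. So the $\dim$–weighted average of the $c_i$ is exactly $1/m$. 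For $m=2$ this already finishes: $\p$ is either $\g^0$–irreducible, or (reductive Hermitian case) equals $V(\mu_1)\oplus V(\mu_2)$ with $V(\mu_2)\cong V(\mu_1)^*$, so duality forces $c_1=c_2$, and the average then pins the common value to $1/2$.

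For an odd prime $m$ the averaged identity no longer determines the individual $c_i$, and proving that they all coincide is the heart of the matter and the step I expect to be hardest. Duality gives $c_i=c_{m-i}$ for free. To obtain full constancy I would exploit primality: for every $a\in(\Z/m)^\times$ the automorphism $\si^a$ has the same fixed–point algebra $\g^0$ (here primality is essential, and forces the Borel–de Siebenthal situation in which $\g^0$ is maximal, obtained by deleting a single node $\a_p$ of mark $m$), while it relabels the eigenspaces by $\p_i\mapsto\p_{a^{-1}i}$. The plan is to realize this relabelling by an automorphism of $\g$ stabilizing $\g^0$ — concretely a Weyl group element $w$ with $w(\varpi)\equiv a\,\varpi \pmod{m\Q}$, where $\varpi$ is the fundamental coweight dual to $\a_p$, so that $\theta(\varpi)=m$. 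Such a $w$ preserves $\kappa_{|\g^0}$, hence $wC_{\g^0}w^{-1}=C_{\g^0}$, so it carries the eigenvalue of $\p_i$ to that of $\p_{a^{-1}i}$, i.e. $c_i=c_{a^{-1}i}$; since $(\Z/m)^\times$ acts transitively on $\{1,\dots,m-1\}$, all $c_i$ agree, and the average of the preceding paragraph fixes the common value at $1/m$. The delicate point on which I would spend the most effort is exactly the existence of this symmetry $w$ — equivalently the $\g^0$–conjugacy of $\si$ and $\si^a$ — which I expect to extract from the Borel–de Siebenthal/affine–diagram description of $\g^0$, or else to bypass by a direct computation of the uniform identity $\big(\mu_i,\mu_i+2\sum_j\rho_0^j\big)=2h^\vee/m$ from the description of $\mu_i$ as the highest root of $\a_p$–degree $i$. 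Everything else is the reduction and the trace count.
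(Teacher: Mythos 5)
The paper does not actually prove this Proposition --- it is quoted from Panyushev \cite{PLMS} and used as a black box (only in the $m=5$ case of Theorem \ref{main}, in fact) --- so your attempt has to stand on its own, and it does not: its central step is circular. Your reduction to $c_i=\frac{1}{2h^\vee}\bigl(\mu_i,\mu_i+2\sum_j\rho_0^j\bigr)$ and the trace identity $\mathrm{tr}_\g(C_{\g^0})=\dim\g^0$ are both correct. The gap is the claim that Proposition \ref{p}(2) then yields $\sum_i c_i\dim\p_i=\frac1m\dim\p$. Parts (1) and (2) of Proposition \ref{p} carry no information about $m$ whatsoever: both are equivalent to the single relation $\kappa_{|\g^0_j}=\frac{d_jh^\vee}{h_j^\vee}\,\kappa_{\g^0_j}$ between the restricted Killing form and the intrinsic Killing form of $\g^0_j$, valid for any equal rank reductive subalgebra. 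What the trace count plus Proposition \ref{p}(1) gives is
\[
\sum_i c_i\dim\p_i=\dim\g^0-\sum_j\frac{h_j^\vee}{d_jh^\vee}\dim\g^0_j=\sum_j\dim\g^0_j\Bigl(1-\frac{h_j^\vee}{d_jh^\vee}\Bigr),
\]
and substituting Proposition \ref{p}(2) into the right-hand side just reproduces the same quantity; the factor $1/m$ cannot appear. The missing identity $\sum_j\dim\g^0_j\bigl(1-\frac{h_j^\vee}{d_jh^\vee}\bigr)=\frac{\dim\p}{m}$ is true, but it is precisely the averaged form of the Proposition, and since constancy of the $c_i$ is vacuous for $m=2$ and follows from your own duality remark for $m=3$, for those $m$ it \emph{is} the Proposition. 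So nothing is proved, including the $m=2$ case you declare finished. (For $m=2$ one can close the gap honestly: $[\p,\p]\subset\g^0$, so writing $C_\g=C_{\g^0}+C_\p$, using that the Killing-form Casimir $C_\g$ acts as $1$ on $\g$ and the swap $\mathrm{tr}_\p(\ad v_b\,\ad v^b|_\p)=\mathrm{tr}_{\g^0}(\ad v^b\,\ad v_b|_{\g^0})$ for dual bases of $\p$, one gets $2\,\mathrm{tr}_\p(C_{\g^0}|_\p)=\dim\p$. This genuinely uses the $\ganz_2$-grading; for $m\ge 3$ the analogous system of trace identities is underdetermined, which is exactly why Panyushev needs nontrivial root-theoretic input there.)

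The second gap is the one you flag yourself: conjugacy of $\si$ and $\si^a$ by an element normalizing $\g^0$. Two comments. First, since $\kappa$ pairs $\p_i$ with $\p_{m-i}$, duality gives $c_i=c_{m-i}$ for free, so this lemma is needed only when $(\ganz/m\ganz)^\times$ is larger than $\{\pm1\}$, i.e.\ only for the single pair $E_8\supset A_4\times A_4$, $m=5$. Second, in that case it is provable rather than merely plausible: $\si^a$ is inner of order $m$ with $\mathrm{Fix}(\si^a)=\g^0$ (this uses primality), a semisimple fixed algebra forces Kac coordinates supported on a single node with $s=1$ and mark $m$, and the affine $E_8$ diagram has a unique node of mark $5$ and no diagram automorphisms; hence $\si^a$ is conjugate to $\si$ by Kac's classification of finite order automorphisms, and any conjugating element maps $\mathrm{Fix}(\si)$ onto $\mathrm{Fix}(\si^a)$, i.e.\ normalizes $\g^0$, hence fixes $C_{\g^0}$ and permutes the $\p_i$ exactly as you wanted. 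So the symmetrization half of your plan is completable; note, though, that the Proposition as stated also allows outer $\si$ (e.g.\ triality, $G_2\subset D_4$), which your coweight/Borel--de Siebenthal framing silently excludes but duality handles. The real obstruction is the first gap: you located the difficulty in the constancy of the $c_i$, when in fact that is the easy half; the step where $1/m$ enters is the whole content of the result, and it is missing.
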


 \section{A criterion for conformality}\label{tre}
 Let $k\ne -h^\vee$. For a simple or abelian Lie algebra $\g$ set 
\begin{equation}\label{cc}
c_\g(k)=\frac{k\dim\g}{k+h^\vee}.
\end{equation}
If $\aa$ is a reductive Lie algebra, which decomposes as  $\aa=\aa_0\oplus\dots\oplus \aa_s$ then we set, for a multindex $\mathbf{k}=(k_0,\ldots,k_s)$,
\begin{equation}\label{ccc}
c_\aa(\mathbf{k})=\sum\limits_{j=0}^s c_{\aa_j}(k_j).
\end{equation}
\begin{theorem}\label{main}  Let $\g$ be a simple Lie algebra and $\g^0$ a maximal equal rank reductive  subalgebra. Then $\widetilde V_{k}(\g^0)$ is a conformal subalgebra of 
$V_{k}(\g)$ if and only if 
\begin{equation}\label{ecc}c_{\g}(k)= c_{\g^0}(\mathbf{k}). \end{equation}
\end{theorem}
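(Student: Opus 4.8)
The plan is to prove the easy implication directly and to reduce the converse to Corollary \ref{numcheckk} through an exact identity relating the two central charges to the conformal weights of the pieces of $\p$. First, if $\widetilde V_k(\g^0)$ is conformally embedded then $\omega_\g=\omega_{\g^0}$; since the central charge is intrinsic to a conformal vector, the central charge of $\omega_\g$, namely $c_\g(k)$, equals that of $\omega_{\g^0}$, namely $c_{\g^0}(\mathbf k)$, which is \eqref{ecc}. For the converse I would write $\p=\bigoplus_i V(\mu_i)$ and set $\delta_i=\sum_{j=0}^t\frac{(\mu_i^j,\mu_i^j+2\rho_0^j)_0}{2(k_j+h_j^\vee)}$, the left-hand side of \eqref{numcheck}; by Corollary \ref{numcheckk} conformality is equivalent to $\delta_i=1$ for all $i$. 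Here $\delta_i$ is exactly the $\omega_{\g^0}$-conformal weight of $x_{(-1)}\vac$ for $x\in V(\mu_i)$, whereas its $\omega_\g$-conformal weight is $1$. I will also use that the level on each simple factor is $k_j=d_jk$, with $d_j$ the Dynkin index of $\g^0_j\hookrightarrow\g$ given by \eqref{d}.

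The key step is the identity
\[
c_\g(k)-c_{\g^0}(\mathbf k)=\frac{k}{k+h^\vee}\sum_i\dim V(\mu_i)\,(1-\delta_i).
\]
To obtain it I would first compute, applying \eqref{ind} to the constituents of $\p$ as a $\g_j^0$-module and using $(\theta_j,\theta_j+2\rho_0^j)_0=2h_j^\vee$, that $\sum_i\dim V(\mu_i)(\mu_i^j,\mu_i^j+2\rho_0^j)_0=2h_j^\vee\dim\g_j^0\,ind_{\g^0_j}(\p)$; inserting Panyushev's value $ind_{\g^0_j}(\p)=\frac{d_jh^\vee}{h_j^\vee}-1$ from Proposition \ref{p} gives $\sum_i\dim V(\mu_i)\,\delta_i=\sum_j\frac{\dim\g_j^0(d_jh^\vee-h_j^\vee)}{d_jk+h_j^\vee}$. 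Comparing this with $c_\g(k)-c_{\g^0}(\mathbf k)$ after substituting $k_j=d_jk$ yields the identity by routine algebra. When $\g^0$ has a one-dimensional center the factor $j=0$ escapes Proposition \ref{p}, but it contributes $\dim\g_0^0=1$ to $c_{\g^0}$, and the explicit weights $\mu_i^0=\pm\zeta$ of \eqref{mu12} reproduce exactly the missing term, so the same identity holds.

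Granting the identity, \eqref{ecc} is equivalent to $\sum_i\dim V(\mu_i)(1-\delta_i)=0$, i.e. to the vanishing of the dimension-weighted average of the $1-\delta_i$. To promote this to $\delta_i=1$ for \emph{every} $i$ I would show that all $\delta_i$ coincide, and this is where maximality and the equal-rank hypothesis are essential. Since $\g^0$ is the fixed-point set of an automorphism of prime order $m$, Proposition \ref{eigen} makes $C_{\g^0}$ a scalar on all of $\p$, while the invariant form pairs the $\xi^i$- and $\xi^{m-i}$-eigenspaces, whence $V(\mu_{m-i})\cong V(\mu_i)^*$ and $\delta_{m-i}=\delta_i$. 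For $m=2$ there is a single piece; for $m=3$ this duality already forces $\delta_1=\delta_2$; and the only remaining prime, $m=5$, occurs by the Borel--de Siebenthal classification solely for $\g^0=A_4\oplus A_4\subset E_8$, where $d_j=1$ and the two factors share $h_j^\vee$, so Proposition \ref{eigen} makes $\delta_i$ independent of $i$ outright. With all $\delta_i$ equal, $\sum_i\dim V(\mu_i)(1-\delta_i)=0$ forces the common value to be $1$, i.e. conformality.

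The main obstacle is precisely this last uniformity step: equality of central charges controls only one linear combination of the conditions $\delta_i=1$, and it is the special representation theory of maximal equal-rank pairs — Panyushev's scalar-Casimir result together with the $i\leftrightarrow m-i$ duality — that upgrades the single scalar identity to the full system. The Hermitian case ($\g^0$ with nonzero center, $m=2$, $\p=V(\theta)\oplus V(-\a_p)$) fits the same pattern: the two pieces are dual, so $\delta_1=\delta_2$ is automatic and the identity closes the argument at once.
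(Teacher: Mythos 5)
Your proof is correct, and it rests on exactly the same pillars as the paper's: Corollary \ref{numcheckk} as the conformality criterion, Panyushev's index formula (Proposition \ref{p}(2)) together with the relation $k_j=d_jk$, the duality $V(\mu_{m-i})\cong V(\mu_i)^*$ coming from $\sigma$-invariance of the form, and Proposition \ref{eigen} for the $A_4\times A_4\subset E_8$ case; even the ``routine algebra'' behind your identity is precisely the manipulation carried out in the paper. The difference is in the organization, and in one place it is a genuine improvement. The paper first uses the duality to see that $(\mu_i^j,\mu_i^j+2\rho_0^j)_0$ is independent of $i$, extracts from Panyushev the per-component formula \eqref{2}, and then shows that \eqref{ecc} implies \eqref{numcheck}; the $m=5$ case is handled separately, by solving \eqref{ecc} for the level (getting $k=1$) and verifying \eqref{numcheck} numerically via Proposition \ref{eigen}. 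You instead prove the aggregate identity $c_\g(k)-c_{\g^0}(\mathbf{k})=\frac{k}{k+h^\vee}\sum_i\dim V(\mu_i)(1-\delta_i)$ first, with no uniformity input at all, and isolate the uniformity of the $\delta_i$ as the single remaining step; this makes transparent why one scalar equation suffices, and it lets you treat $m=5$ uniformly --- both $A_4$ factors have $d_j=1$ and the same $h_j^\vee$, so Proposition \ref{eigen} forces all four $\delta_i$ to coincide --- without ever computing the conformal level.

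Two small points should be patched. First, your identity carries the prefactor $k/(k+h^\vee)$, so deducing $\sum_i\dim V(\mu_i)(1-\delta_i)=0$ from \eqref{ecc} requires $k\ne 0$; as in the paper, dispose of $k=0$ separately (there $V_0(\g)=\C$ and the statement is trivial --- note that in the Hermitian case $\omega_{\g^0}$ is not even defined at $k=0$, since $k_0+h_0^\vee=0$). Second, in the Hermitian case your assertion that the $j=0$ term ``reproduces exactly the missing term'' is true but not free: it amounts to the computation $(\zeta,\zeta)=2h^\vee/\dim\p$, which the paper obtains by observing that $\kappa(\varpi,\varpi)=\dim\p$ (the roots of $\p$ are exactly those $\a$ with $|\a(\varpi)|=1$). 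Without this computation your identity is not established when $\g^0$ has a center, so it must be included.
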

\begin{proof} The statement is trivially verified when $k=0$, so we can assume $k\ne 0$. Recall that $c_\g(k)$ is the central charge of the conformal vector of  $V_{k}(\g)$, and that $\sum\limits_{j=0}^t c_{\g^0_j}(k_j)$ is the central charge  of the conformal vector of
$\widetilde V_{k}(\g^0)$. Hence, if  $\widetilde V_{k}(\g^0)$ is a conformal subalgebra of 
$V_{k}(\g)$, equality \eqref{ecc} holds.\par
To prove the converse, by the classification of finite order automorphisms of simple Lie algebras \cite{Kac} and Borel--de Siebenthal Theorem \cite{bds},  $\g^0$ is the fixed point subalgebra of an automorphism of $\g$ of finite order $m$, which is a prime number by maximality (indeed $m=2,3,5$). Recall  that if $\g^0_0\ne \{0\}$, then $\dim\g^0_0=1$ and $m=2$.

Recall that, as a $\g^0$-module, $\p=\oplus_{i=1}^s V(\mu_i)$ with $\mu_i=\sum_{j=0}^t\mu_{i}^j$. To prove our claim, we verify relation \eqref{numcheck}.  We have to estimate $( \mu_{i}^j , \mu_{i}^j + 2 \rho_0^j )_0$. 

 We start discussing the cases when $m$ is either $2$ or $3$.  By the additivity of the index, combining \eqref{ind} with part (2) of Proposition \ref{p} (and using the conversion formulas \eqref{cx}, \eqref{d}) we get, if $j>0$,
$$\frac{d_j h^\vee}{h_j^\vee}-1=\sum_{i=1}^{s}\frac{\dim V(\mu_i)}{\dim\g^0_j}\,\,\frac{( \mu_{i}^j , \mu_{i}^j + 2 \rho_0^j )_0}{(\theta_j,\theta_j+2\rho_0^j)_0}.$$

 If $m=2,3$, then $s=1$ or $s=2$ and $V(\mu_2)=V(\mu_1)^*$.   It follows that, in both cases, $\dim V(\mu_i)$ as well as $( \mu_{i}^j , \mu_{i}^j + 2 \rho_0^j )_0$ are independent of $i$, so $\dim V(\mu_i)=\frac{\dim\p}{s}$ and
$$\frac{d_j h^\vee}{h_j^\vee}-1=\frac{\dim \p}{\dim\g^0_j}\,\,\frac{( \mu_{i}^j , \mu _{i}^j + 2 \rho_0^j )_0}{(\theta_j,\theta_j+2\rho_0^j)_0}.$$
Since $(\theta_j,\theta_j+2\rho_0^j)_0=2h^\vee_j$, we have

\begin{equation}\label{2}
( \mu_{i}^j , \mu_{i}^j + 2 \rho_0^j )_0=\frac{2\dim\g^0_j}{\dim\p}(d_jh^\vee-h_j^\vee).
\end{equation}

Let us discuss the case $j=0$. Assume that $\g^0_0\ne\{0\}$. We need to compute $(\mu_{i}^0,\mu_{i}^0)$ i.e. $(\zeta,\zeta)$ (cf. \eqref{mu12}).
Since the roots of $\p$ are precisely the roots $\a$ such that $|\a(\varpi)|=1$, we have $\kappa(\varpi,\varpi)=\dim \p$, hence $h_\zeta=\frac{1}{\dim\p}\varpi$ is the unique element of $\h$ such that $\kappa(h_\zeta,h)=\zeta(h)$. Hence $\kappa(\zeta,\zeta)=\kappa(h_\zeta,h_\zeta)=\frac{1}{\dim\p}$. It follows form the relation $\kappa(\cdot,\cdot)=\frac{1}{2h^\vee}(\cdot,\cdot)$ that $(\zeta,\zeta)=\frac{2h^\vee}{\dim\p}$, so \eqref{2} holds also in this case.
\par
Now we proceed to evaluate \eqref{cc}.
Setting $g=\dim\g,\,g_i=\dim\g^0_i,\,i=0,\ldots,t$, we may write  
\eqref{cc} as
\begin{equation}\label{g}
\frac{gk}{k+h^\vee}-\sum_{j=0}^t\frac{d_jg_jk}{d_jk+h_j^\vee}=0.
\end{equation}
Multiplying \eqref{g} by $\frac{k+h^\vee}{k}$ we find that
\begin{align*}
0&=g-\sum_{j=0}^t\frac{d_jg_j(k+h^\vee)}{d_jk+h_j^\vee}=g-\sum_{j=0}^t g_j +\sum_{j=0}^t g_j-\sum_{j=0}^t\frac{d_jg_j(k+h^\vee)}{d_jk+h_j^\vee}\\
&=\dim\p +\sum_{j=0}^t\left(g_j-\frac{d_jg_j(k+h^\vee)}{d_jk+h_j^\vee}\right)=\dim\p-\sum_{j=0}^t\frac{ g_j(d_jh^\vee-h^\vee_j)}{  d_jk+ h _{j}
^{\vee} }.
\end{align*}
It follows that
\begin{equation}
 1=\sum_{j=0}^t\frac{ 2g_j(d_jh^\vee-h^\vee_j)}{2\dim\p  (d_jk+ h _{j}
^{\vee} )}
\end{equation}
and, using \eqref{2}, we find 
\begin{equation}\label{4}
 \sum_{j=0}^t\frac{ ( \mu_{i}^j , \mu_{i}^j + 2 \rho_0^j ) _0}{ 2 (d_jk+ h _{j}
^{\vee} )} = 1.
\end{equation}

In the remaining $m=5$ case we have that $\g$ is of type $E_8$ and $\g^0$ is of type $A_4\times A_4$. In this case the equality of central charges reads
$$
\frac{248}{k+30}=\frac{48}{k+5}.
$$
so that $k=1$.
Formula \eqref{numcheck} reduces to
$$
\frac{1}{12}\sum_{j=1}^2(\mu_{i}^j,\mu_{i}^j+2\rho_0^j)_0=1.
$$
This is readily checked since, by Proposition \ref{eigen}, we have
$
\sum_{j=1}^2\kappa(\mu_{i}^j,\mu_{i}^j+2\rho_0^j)=\frac{1}{m}
$
hence
$
\sum_{j=1}^2(\mu_{i}^j,\mu_{i}^j+2\rho_0^j)_0=\frac{2h^\vee}{m}=12
$.
\end{proof}

We now discuss the extension of Theorem \ref{main} to the embeddings of an equal rank subalgebra $\g^0$ in $\g$ which is not maximal. 
We start with the following
\begin{lemma} Let $\g^0$ be a maximal conformal subalgebra of $\g$ (i.e., maximal among conformal subalgebras of $\g$). Then $\g^0$ is a maximal reductive subalgebra of $\g$.
\end{lemma}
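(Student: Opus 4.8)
The plan is to argue by contradiction. Suppose $\g^0$ is a maximal conformal subalgebra of $\g$ but is \emph{not} a maximal reductive subalgebra; then there is a reductive subalgebra $\aa$ with $\g^0\subsetneq\aa\subsetneq\g$, and I will show that $\aa$ is itself a conformal subalgebra, contradicting the maximality of $\g^0$ among conformal subalgebras. First I record the structural facts about $\aa$ that make the machinery applicable. Since $\g^0$ already has full rank, $\aa\supseteq\g^0$ has full rank as well, so $\aa$ is an equal rank (regular) reductive subalgebra of $\g$; consequently the form $(\cdot,\cdot)$ stays nondegenerate on $\aa$, and $\omega_\aa$ is a well-defined Sugawara vector (assuming no simple factor of $\aa$ sits at its critical level). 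Let $\p$ be the orthocomplement of $\g^0$ in $\g$, as in the setup, and let $\p'$ be the orthocomplement of $\g^0$ in $\aa$, so that $\p'\subseteq\p$ as $\g^0$-modules.

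The key observation is that conformality of $\g^0$ inside the ambient algebra is inherited by the intermediate algebra $\aa$. Indeed, the conformal weight $(\omega_{\g^0})_0\,x_{(-1)}\vac$ of the current attached to $x\in\g$ is the intrinsic Casimir eigenvalue determined by the action of $\g^0$ on $x$, and is independent of which ambient vertex algebra one computes it in. Because $\widetilde V_k(\g^0)$ is conformally embedded in $V_k(\g)$, Theorem \ref{thm-intro} gives $(\omega_{\g^0})_0\,x_{(-1)}\vac=x_{(-1)}\vac$ for every $x\in\p$, hence in particular for every $x\in\p'\subseteq\p$. Applying the criterion of Theorem \ref{thm-intro} to the pair $(\aa,\g^0)$ — now with the reductive algebra $\aa$ as ambient, testing the condition on the generators $x_{(-1)}\vac$ with $x\in\p'$ — this says exactly that $\widetilde V_k(\g^0)$ is conformally embedded in $\widetilde V_k(\aa)$, i.e. $\omega_\aa=\omega_{\g^0}$.

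Now I combine the two conformalities inside $V_k(\g)$, where all three Sugawara vectors live. From $\g^0\subset\aa$ I have just obtained $\omega_\aa=\omega_{\g^0}$, while the standing hypothesis on $\g^0\subset\g$ gives $\omega_{\g^0}=\omega_\g$; therefore $\omega_\aa=\omega_\g$. Thus $\widetilde V_k(\aa)$ is conformally embedded in $V_k(\g)$, so $\aa$ is a conformal subalgebra with $\g^0\subsetneq\aa\subsetneq\g$, contradicting the maximality of $\g^0$. Hence no such $\aa$ exists and $\g^0$ is a maximal reductive subalgebra.

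The step I expect to demand the most care is the application of Theorem \ref{thm-intro} with the reductive (rather than simple) algebra $\aa$ as ambient: one must verify that both the criterion and its reduction to testing only on the generators $x_{(-1)}\vac$ with $x\in\p'$ survive when the ambient is merely reductive (the Sugawara vector splitting over the center and the simple factors of $\aa$), and that $\omega_\aa$ genuinely exists, i.e. that no simple factor of $\aa$ appears at its critical level. The remaining points — that $\aa$ has full rank, that the form restricts nondegenerately to $\aa$, and that the relevant conformal weight is an intrinsic Casimir eigenvalue — are routine.
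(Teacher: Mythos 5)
Your proof is correct and follows essentially the same route as the paper's: assume an intermediate reductive subalgebra $\aa$ with $\g^0\subsetneq\aa\subsetneq\g$, restrict the conformal-weight criterion of Theorem \ref{thm-intro} from $\p$ to the orthocomplement of $\g^0$ in $\aa$, conclude $\omega_{\g^0}=\omega_\aa$, and then get $\omega_\aa=\omega_\g$ by transitivity, contradicting maximality among conformal subalgebras. The technical caveats you flag at the end (reductive ambient algebra, existence of $\omega_\aa$ away from critical levels) are passed over silently in the paper's own proof as well, so nothing in your argument diverges from it in substance.
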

\begin{proof}
Suppose by contradiction  that  there exists a reductive subalgebra $\k$ of $\g$ with $\g^0\subsetneq \k\subsetneq \g$ . Then, since the form $(\cdot,\cdot)$ is nondegenerate when restricted to $\k$, the orthocomplement $\p$  of $\g^0$ in $\g$ can be written as $\p=\p\cap\k\oplus V$ with $V$ the orthocomplement of $\k$ in $\g$. If the embedding of $\g^0$ in $\g$ is conformal, then, by Theorem \ref{thm-intro}, $(\omega_{\g^0})_0x_{(-1)}\vac=x_{(-1)}\vac$ for all $x\in \p\cap\k$. It follows that the embedding of $\g^0$ in $\k$ is conformal, hence, clearly, also the embedding of $\k$ in $\g$ is conformal.
\end{proof}
 This observation leads to the following criterion.
\begin{prop}\label{critnonmax}
 Let $\g^0=\k_1\subset \k_2\subset\dots \subset\k_t=\g$ be a sequence of equal rank subalgebras with $\k_i$ maximal in $\k_{i+1}$. Let $\mathbf{k}_i$ be the multi-index such that the vertex subalgebra spanned in $V_k(\g)$ by $\{x_{(-1)}\vac\mid x\in\k_i\}$ is a quotient of $V^{\mathbf{k}_i}(\k_i)$. 
 Then $\g^0\subset \g$ is a conformal embedding if and only if \begin{equation}\label{non maximal}
 c_{\k_i}(\mathbf k_i)=c_{\k_{i-1}}(\mathbf k_{i-1}),\ i=1,\ldots,t.
 \end{equation}
\end{prop}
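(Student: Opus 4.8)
The plan is to reduce the statement to a repeated application of the maximal case, Theorem \ref{main}, by exploiting the transitivity of conformality along the chain $\g^0=\k_1\subset\cdots\subset\k_t=\g$. Throughout I view the Sugawara vectors $\omega_{\k_1},\ldots,\omega_{\k_t}=\omega_\g$ as elements of $V_k(\g)$, sitting inside the nested vertex subalgebras generated by $\{x_{(-1)}\vac\mid x\in\k_i\}$; by Definition \ref{1} the link $\k_{i-1}\subset\k_i$ is conformal precisely when $\omega_{\k_{i-1}}=\omega_{\k_i}$. The forward implication is then immediate: if every link is conformal, then $\omega_{\g^0}=\omega_{\k_1}=\cdots=\omega_{\k_t}=\omega_\g$, so $\g^0\subset\g$ is conformal.

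For the converse I would run the argument in the proof of the Lemma above iteratively. Since the normalized form is nondegenerate on each $\k_i$, the orthocomplement $\p$ of $\g^0$ in $\g$ contains the orthocomplement $\p_i=\p\cap\k_i$ of $\g^0$ in $\k_i$. Conformality of $\g^0\subset\g$ gives, via Theorem \ref{thm-intro}, that $(\omega_{\g^0})_0x_{(-1)}\vac=x_{(-1)}\vac$ for all $x\in\p$, in particular for all $x\in\p_i$; applying Theorem \ref{thm-intro} to the pair $(\k_i,\g^0)$ shows that $\g^0\subset\k_i$ is conformal, i.e. $\omega_{\g^0}=\omega_{\k_i}$, for every $i$. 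Comparing consecutive identities yields $\omega_{\k_{i-1}}=\omega_{\k_i}$, so every link is conformal. Hence $\g^0\subset\g$ is conformal if and only if all the links $\k_{i-1}\subset\k_i$ are.

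It remains to translate conformality of each maximal link into the central-charge equality, and here I would invoke Theorem \ref{main}. Because $\k_{i-1}$ is maximal equal rank in the reductive algebra $\k_i$, the Borel--de Siebenthal description localizes the modification to a single simple ideal: writing $\k_i=\mathfrak z\oplus\mathfrak s_1\oplus\cdots\oplus\mathfrak s_r$, one has $\k_{i-1}=\mathfrak z'\oplus\mathfrak s_1\oplus\cdots\oplus\mathfrak s_j^0\oplus\cdots\oplus\mathfrak s_r$, where $\mathfrak s_j^0$ is a maximal equal rank reductive subalgebra of the simple algebra $\mathfrak s_j$ and the new center $\mathfrak z'$ is contained in $\mathfrak z\oplus\mathfrak s_j$. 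Thus the orthocomplement of $\k_{i-1}$ in $\k_i$ lies inside $\mathfrak s_j$, and by Theorem \ref{thm-intro} conformality of the link is equivalent to conformality of $\mathfrak s_j^0\subset\mathfrak s_j$ at the induced level. Theorem \ref{main}, applied with the simple algebra $\mathfrak s_j$ in place of $\g$, identifies this with the equality of central charges on the factor $\mathfrak s_j$; since the remaining summands are common to $\k_{i-1}$ and $\k_i$ their contributions to \eqref{ccc} cancel, so this is exactly $c_{\k_{i-1}}(\mathbf k_{i-1})=c_{\k_i}(\mathbf k_i)$. Combining with the previous paragraph, $\g^0\subset\g$ is conformal if and only if all the equalities in \eqref{non maximal} hold.

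I expect the main obstacle to be this structural localization step: verifying that maximality of $\k_{i-1}$ inside the merely reductive algebra $\k_i$ forces the modification into one simple ideal, so that the simple-ambient hypothesis of Theorem \ref{main} is genuinely met and the central-charge equality localizes to a single factor. The accompanying bookkeeping of the induced level on $\mathfrak s_j$ and of the center $\mathfrak z'$ produced by the deletion is where care is needed; once this is in place, the rest is the transitivity argument together with a term-by-term comparison in \eqref{ccc}.
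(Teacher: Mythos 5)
Your proposal is correct and follows essentially the same route as the paper: iterate the transitivity argument of the preceding Lemma (via Theorem \ref{thm-intro}) to show that conformality of $\g^0\subset\g$ is equivalent to conformality of every link $\k_{i-1}\subset\k_i$, then use the equal-rank structure to split each link as a common ideal plus a maximal embedding into a single simple ideal, where Theorem \ref{main} converts conformality into the central-charge equality after the common contributions cancel. The structural localization step you flag as the main obstacle is exactly what the paper asserts (and does not elaborate beyond noting that both subalgebras contain a Cartan subalgebra), so nothing in your outline diverges from the paper's own proof.
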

\begin{proof}
If $\g^0\subset \g$ is a conformal embedding, then, by the above lemma, $\k_{i-1}\subset\k_i$ is a conformal embedding. Since the two subalgebras contain a Cartan subalgebra of $\g$, we have that there are ideals $\ov\k_i,\k'_{i}$ of $\k_i$ with $\ov\k_i$ simple or abelian, and an ideal $\ov \k_{i-1}$ of $\k_{i-1}$ such that $\k_i=\ov\k_{i}\oplus \k'_{i}$, $\k_{i-1}=\ov\k_{i-1}\oplus \k'_{i}$ and 
$
\ov \k_{i-1}\subset \ov\k_i$ maximal embedding. Applying Theorem \ref{main} to the embedding $
\ov \k_{i-1}\subset \ov\k_i$, we obtain $c_{\k_i}(\mathbf{k}_i)=c_{\k_{i-1}}(\mathbf{k}_{i-1})$. 

If condition \eqref{non maximal} holds, then, by Theorem \ref{main}, the embedding $
\ov \k_{i-1}\subset \ov\k_i$ is conformal, hence the embedding $\k_{i-1}\subset\k_i$ is conformal for $i=1,\ldots,t$, so the embedding $\g^0\subset \g$ is conformal.
\end{proof}

\begin{defi}
We say that a  level $k$ of $V_{k}(\g)$  for which equality 	\eqref{ecc} holds is a  {\it conformal level} of the pair $(\g,\g^0)$.
\end{defi}
In the following tables we list the conformal levels for all reductive maximal equal rank subalgebras of simple Lie algebras, thus classifying all possible maximal conformal equal rank embeddings. The cases where $\g^0$ is not semisimple are denoted by $X\times Z$ where $X$ is the type of $[\g^0,\g^0]$ and  $Z$ denotes the one-dimensional center of $\g^0$.

\begin{center}

\centerline{Type $A_n,\,n\geq 1$}\vskip5pt
\begin{tabular}{c|c}
$\g^0$& conformal level\\
\noalign{\smallskip}\hline\noalign{\smallskip}
$A_h\times A_{n-h-1}\times Z$, $h\ge 1$, $n-h\ge 2$&$1; -1; -\frac{n+1}{2} \text{ (if $h\ne (n-1)/2)$}$\\
\noalign{\smallskip}\hline\noalign{\smallskip}
$A_{n-1} \times Z$, $n\ge1$&$1; -\frac{n+1}{2} \text{ (if $n>1$)}$\\
\end{tabular}
\end{center}

\begin{center}
\centerline{Type $D_n$, $n\ge 4$}\vskip5pt

\begin{tabular}{c|c}
$\g^0$&conformal levels\\
\noalign{\smallskip}\hline\noalign{\smallskip}
$D_h\times D_{n-h}$, $h\ge 2$, $n-h\ge 2$&$1; 2-n \text{ (if $h\ne n/2$)}$\\
\noalign{\smallskip}\hline\noalign{\smallskip}
$D_{n-1}\times Z$&$1; 2-n$\\
\noalign{\smallskip}\hline\noalign{\smallskip}
$A_{n-1}\times Z$&$1; -2$\\
\end{tabular}
\end{center}
\vskip10pt

\begin{center}

\centerline{Type $C_n$, $n\ge 2$}\vskip5pt
\begin{tabular}{c|c}
$\g^0$&conformal levels\\
\hline\noalign{\smallskip}
$C_h\times C_{n-h}$, $h\ge 1$, $n-h\ge 1$&$-\frac{1}{2}; -1-\frac{n}{2} \text{ (if $h\ne n/2$)}$\\
\noalign{\smallskip}\hline\noalign{\smallskip}
$A_{n-1} \times Z$&$1; -\frac{1}{2}$
\end{tabular}
\end{center}

\begin{center}
\centerline{Type $B_n$, $n\ge 3$}\vskip5pt
\begin{tabular}{c|c}
$\g^0$&conformal levels\\
\noalign{\smallskip}\hline\noalign{\smallskip}
$D_h\times B_{n-h}$, $h\ge 1$, $n-h\ge 1$&$1; \frac{3}{2}-n$ \\
\noalign{\smallskip}\hline\noalign{\smallskip}
$D_n$ &$\frac{3}{2}-n$ \\\noalign{\smallskip}\hline\noalign{\smallskip}
$B_{n-1}\times Z$, $n\ge 4$&$1; \frac{3}{2}-n$
\end{tabular}
\end{center}

\begin{rem}\label{41} Assume $\g^0$ semisimple and let $\g^0=\g_1^0\oplus\ldots\oplus\g^0_t$ be its decomposition into simple ideals. 
Recall that a set of simple roots for $\g^0$ can be obtained from $\Pi\cup\{-\theta\}$ by removing
a simple root of $\Pi$. Call this root $\a_p$, and set 
$$\xi=\begin{cases}-\frac{1}{2}&\quad\text{if $\a_p$ is short,}\\ 1 &\quad\text{otherwise.}\end{cases}$$
Then the previous tables show that equality of central charges occurs at the following levels:
\begin{itemize}
\item $k=-\frac{h^\vee}{2}+1$ if $t=1$;
\item $k\in\{\xi,-\frac{h^\vee}{2}+\xi\}$ if $t>1$ and there are at least two non-isomorphic simple ideals or all simple ideals are isomorphic but two of them have different Dynkin index;
\item $k=\xi$ otherwise, i.e., all simple ideals are isomorphic and  have the same Dynkin index.
\end{itemize}

\end{rem}
\begin{center}
\centerline{Type $E_6$}\vskip5pt
\begin{tabular}{c|c}
$\g^0$&conformal levels\\
\hline
$A_{1}\times A_5$&$1; -3$\\
\hline
$A_{2}\times A_2 \times A_2$ &$1$\\
\hline
$D_{5}\times Z$&$1; -3$
\end{tabular}
\end{center}

\begin{center}
\centerline{Type $E_7$}\vskip5pt
\begin{tabular}{c|c}
$\g^0$&conformal levels\\
\hline
$A_{1}\times D_6$&$1; -4$\\
\hline
$A_{2}\times A_5$&$1; -4$\\
\hline
$A_7$&$1$\\
\hline
$E_6\times Z$&$1; -4$
\end{tabular}
\end{center}
\vskip10pt

\begin{center}
\centerline{Type $E_8$}\vskip5pt
\begin{tabular}{c|c}
$\g^0$&conformal levels\\
\hline
$A_{1}\times E_7$&$1; -6$\\
\hline
$A_{2}\times E_6$&$1; -6$\\\hline
$A_{4}\times A_4$&$1$\\\hline
$D_8$&$1$\\
\hline
$A_8$&$1$\\
\end{tabular}
\end{center}
\par
\begin{center}
\centerline{Type $F_4$}\vskip5pt
\begin{tabular}{c|c}
$\g^0$&conformal levels\\
\hline
$A_{1}\times C_3$&$1; -\frac{5}{2}$ \\
\hline
$A_{2}\times A_2$&$1; -\frac{5}{2}$ \\\hline
$B_4$&$-\frac{5}{2}$\\
\end{tabular}
\end{center}
\begin{center}
\centerline{Type $G_2$}\vskip5pt
\begin{tabular}{c|c}
$\g^0$&conformal levels\\
\hline
$A_{1}\times A_1$&$1; -\frac{5}{3}$\\
\hline
$A_{2}$&$-\frac{5}{3}$
\end{tabular}
\end{center}

\begin{rem}
Recall the Deligne exceptional series \cite{D}, \cite{Kawa}
$$A_1\subset A_2\subset  G_2 \subset D_4\subset  F_4 \subset E_6 \subset E_7 \subset E_8.$$
Set $\aa=Span_\C\{X_\beta\mid (\beta,\theta)=0\} + h_\theta^\perp$. We remark that if $\g\not\cong A_1, D_4$ is in the Deligne  series, then $\aa$ is simple or abelian. Moreover, there is always a maximal equal rank conformal  subalgebra of the form $sl(2)\times \aa$ where $sl(2)=\C X_{\theta}\oplus\C h_{\theta} \oplus \C X_{-\theta}$. For this maximal subalgebra,  the conformal levels are $1$ and $-h^\vee/6-1$.
This numerological coincidence can be explained as follows. 
Let $W_{min}(k)$ be the minimal simple W-algebra of level $k$ for $\g$.
By \cite[Theorem 7.2]{AM}, $W_{min}(-h^\vee/6-1)$ is 1-dimensional. On the other hand,  by \cite[Theorem 5.1(d)]{KW}, $W_{min}(k)$    has a subalgebra of currents $\widehat\aa$
of level $k+(h^\vee-h^\vee_\aa)/2$, where $h^\vee$ is Coxeter number of $\g$ and $h^\vee_\aa$ that of $\aa$. Hence this level should be $0$ when $k=-h^\vee/6-1$, therefore
\begin{equation}\label{12345}-\frac{h^\vee}{6}-1=-\frac{h^\vee-h^\vee_\aa}{2}.\end{equation}
Now recall that, for $\g$ in the Deligne series, we have $\dim\g=\frac{2(h^\vee+1)(5h^\vee-6)}{h^\vee+6}$ \cite{D}. Since the number of roots not orthogonal to $\theta$ is $4h^\vee-6$ \cite{S}, one easily verifies that the degree 3 equation $c_k(\g)=c_k(\aa)+c_k(sl(2))$ in the variable $k$ has exactly $1$ and $-h^\vee/6-1$ as nonzero roots.
\end{rem}

\section{Finite decomposition for maximal equal rank semisimple embeddings}\label{FD}
In this section we determine precisely, for all pairs $(\g,\g^0)$ with $\g^0$ semisimple,   the  conformal levels  such that  the decomposition is finite. The main result is the following theorem, which will be proved along the section.  
\begin{theorem}\label{mainss} If  $\g^0$ is semisimple, the conformal levels different from $1$  for which  $V_k(\g)$ decomposes  as a finite sum of $\widehat{\g^0}$-irreducibles
are the following:
$$
\begin{tabular}{ c | l | l  }
conformal level& $\g$ & $\g^0$ \\ \hline
$-1/2$&$C_n$&$C_h\times C_{n-h}$\\
  $-n+3/2$ & $B_n$ & $D_n$ \\
  $-5/3$ & $G_2$ & $A_2$ \\
  $-5/2$ & $F_4$ & $B_4$ \\
\end{tabular}
$$
In type $C_n$,  $h$ ranges from $1$ to $n-1$;  by  $C_1$ we mean $A_1$. In type $B_3$ we set $D_3=A_3$.
\end{theorem}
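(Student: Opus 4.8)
The plan is to run through the conformal levels $k\neq 1$ supplied by Theorem \ref{main} and read off from the classification tables, sorting the semisimple pairs $(\g,\g^0)$ into those for which $V_k(\g)$ is a finite sum of $\widehat{\g^0}$-modules and those for which it is not. The guiding dichotomy is structural: the four finite families are exactly the pairs with $\g^0$ \emph{simple} ($D_n\subset B_n$, $A_2\subset G_2$, $B_4\subset F_4$) together with the single product pair $C_h\times C_{n-h}\subset C_n$ at the distinguished level $-1/2$, whereas every other conformal level $k\neq 1$ occurs for a \emph{product} $\g^0=\g^0_1\oplus\cdots\oplus\g^0_t$ (including $C_h\times C_{n-h}\subset C_n$ at $-1-n/2$) and will be shown to give an infinite decomposition. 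In each of the four finite families $\g^0$ is the fixed-point set of an automorphism of prime order $m\in\{2,3\}$, so by Proposition \ref{eigen} the module $\p$ is irreducible ($m=2$, giving the candidate $V_k(\g)=L_{\g^0}(0)\oplus L_{\g^0}(\mu_1)$) or splits as $V(\mu_1)\oplus V(\mu_2)$ with $\mu_2=\mu_1^*$ ($m=3$, candidate $V_k(\g)=L_{\g^0}(0)\oplus L_{\g^0}(\mu_1)\oplus L_{\g^0}(\mu_2)$).

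To establish finiteness in these four cases I would apply Theorem \ref{general}. It is enough to verify hypothesis \eqref{uvj-2}, and for this I would use the sufficient criterion \eqref{finiote} of Remark \ref{conditionsfinite}: for every constituent $V(\nu_r)$ occurring in a product $V(\mu_i)\otimes V(\mu_j)$ but distinct from the expected summand $V(\mu_l)$, one checks that the conformal weight $\sum_{u}(\nu_r^u,\nu_r^u+2\rho_0^u)_0/(2(k_u+h^\vee_u))$ is not a positive integer. Thus I would first compute the relevant tensor products — the tensor square of the vector module for $D_n\subset B_n$, of the spin module for $B_4\subset F_4$, the products $3\otimes 3$ and $3\otimes\bar 3$ for $A_2\subset G_2$, and the tensor square of $V(\omega_1)\boxtimes V(\omega_1)$ for $C_h\times C_{n-h}\subset C_n$ — and then evaluate these conformal weights at the relevant level. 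For the three simple families this is a bounded computation; for the type $C$ family it is an infinite family in $n$ and $h$, and here the real work is to obtain the conformal weights in closed form at $k=-1/2$ and to see that none of them is a positive integer (the expected picture is that each lies strictly between consecutive integers because of a nonzero fractional contribution $1/(2h+1)+1/(2(n-h)+1)$ of the two $C$-factors).

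For the remaining semisimple pairs I would prove infiniteness by producing infinitely many inequivalent $\widehat{\g^0}$-constituents, following Proposition \ref{43} and Corollary \ref{46} and the criterion of Proposition \ref{cw}. Concretely, at each of these levels hypothesis \eqref{uvj-2} fails because some constituent $V(\nu)$ of a product $V(\mu_i)\otimes V(\mu_j)$ acquires conformal weight exactly $2$; by Proposition \ref{cw} this produces a genuine $\widehat{\g^0}$-singular vector $w$ of conformal weight $2$ generating a copy of $L_{\g^0}(\nu)$ with $\nu$ not among the $\mu_i$. One then shows, as in the proof of Theorem \ref{generalhs}, that the iterated products of $w$ are nonzero singular vectors of conformal weights $2,4,6,\dots$, hence generate infinitely many pairwise inequivalent irreducible $\widehat{\g^0}$-submodules $L_{\g^0}(q\nu)$. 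The singularity is verified by commuting the raising operators $x_{(i)}$ ($x\in\g^0$, $i\geq 0$) through the normally ordered product exactly as in \eqref{123}, and the non-vanishing at step $q$ reduces, as in Theorem \ref{generalhs}, to a scalar linear in $q$ that does not vanish for these negative, non-integral levels.

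The step I expect to be hardest is this last non-vanishing together with the explicit identification of the conformal-weight-$2$ singular vector: unlike the essentially abelian situation of Theorem \ref{generalhs}, where the vector is simply $(X_\theta)_{(-1)}^q\vac$, here one must pick out the correct dominant weight $\nu$ inside the semisimple tensor product $V(\mu_i)\otimes V(\mu_j)$, realize the corresponding primitive vector explicitly, and control the $\widehat{\g^0}$-lowering computations that certify both singularity and non-vanishing level by level. A secondary difficulty, already noted, is carrying out the finiteness estimate for the whole type $C$ family uniformly in $n$ and $h$.
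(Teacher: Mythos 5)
Your proposal breaks into a finiteness half and an infiniteness half, and both halves have genuine gaps; moreover they contradict each other on a specific case. For finiteness, your plan is to verify hypothesis \eqref{uvj-2} of Theorem \ref{general} through the numerical criterion \eqref{finiote}, but that criterion actually \emph{fails} in two of the four families you need. For $B_4\subset F_4$ at $k=-5/2$ the decomposition $V(\omega_4)\otimes V(\omega_4)=\bigoplus_{j=0}^{3}V(\omega_j)\oplus V(2\omega_4)$ contains $V(\omega_3)$, whose Sugawara conformal weight is
$$
\frac{(\omega_3,\omega_3+2\rho_0)_0}{2(k+h^\vee_{B_4})}=\frac{18}{9}=2\in\ganz_+,
$$
so \eqref{finiote} is violated and Theorem \ref{general} cannot be invoked this way; this is precisely why the paper does not redo these cases but delegates the three simple families to the earlier work of Adamovi\'c and Per\v{s}e (see \cite[3.5]{KMP}, \cite{Perse}), where the absence of a primitive vector at that weight is established by finer arguments. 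Similarly, in type $C$ at $k=-1/2$ your claim that each conformal weight has a nonzero fractional part $\frac{1}{2h+1}+\frac{1}{2(n-h)+1}$ is wrong for the \emph{mixed} constituents: $V(2\omega_1)\boxtimes V(\omega_2)$ has conformal weight $2+\frac{1}{2h+1}-\frac{1}{2(n-h)+1}$, which equals exactly $2$ whenever $h=n-h$, so the uniform-in-$(n,h)$ verification you describe as the main work cannot succeed at $h=n/2$ by this route.

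The infiniteness half is logically circular and rests on an invalid inference. Failure of the \emph{sufficient} criterion \eqref{finiote} does not imply failure of \eqref{uvj-2}: a constituent $V(\nu)$ of Sugawara weight $2$ need not correspond to any actual primitive vector in $V_k(\g)$. Indeed your own test refutes itself on $B_4\subset F_4$: there $V(\omega_3)$ has conformal weight exactly $2$, so your argument would declare this case infinite, contradicting the theorem (and your finiteness claim for it). Nor can Proposition \ref{cw} ``produce'' the weight-$2$ singular vector: in the paper that proposition is a \emph{corollary} of Theorem \ref{mainss}, proved using the singular vectors constructed along the way, so invoking it here is circular. Finally, ``iterated products of $w$'' for a composite primitive vector $w$ are neither automatically singular nor automatically nonzero; in Theorem \ref{generalhs} and Proposition \ref{43} the relevant vectors are powers of \emph{root} vectors, and singularity hinges on bracket identities available because the attached weight is long. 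What the paper actually does is different and is the missing content of your proposal: its organizing dichotomy is the length of the deleted simple root $\a_p$ (not simple versus product $\g^0$). When $\a_p$ is long, the vectors $v_m=(X_\eta)_{(-1)}^m\vac$, with $\eta$ the long highest weight of $\p$, are shown to be $\widehat{\g^0}$-singular for \emph{all} $m$; if the decomposition were finite they would eventually vanish, and Lemma \ref{actionxeta} then forces $k=M\in\ganz_{\geq 0}$, which \cite{KMP} rules out for $k\neq 1$. This settles every infinite case uniformly except $C_h\times C_{n-h}\subset C_n$ at $k=-1-\frac{n}{2}$, which requires the bespoke singular vectors of Lemma \ref{action2e1} and Corollary \ref{46}. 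Your proposal contains no counterpart of either construction.
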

\vskip10pt

We start the proof of Theorem \ref{mainss} with a simple computation.

 \begin{lemma}\label{actionxeta}Let $\eta$ be a highest weight of $\p$. Set $v_m=(X_\eta)^m_{(-1)}\vac$. Then
$$
(X_{-\eta})_{(1)}v_{m+1}=((m+1)k-\Vert\eta\Vert^2\binom{m+1}{2})v_{m}.
$$
\end{lemma}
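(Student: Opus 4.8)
The plan is to treat this as a computation inside the copy of affine $\mathfrak{sl}(2)$ generated by the modes of $X_{\pm\eta}$ and $h_\eta$, acting on the vacuum module $V_k(\g)$. First I would fix the normalization of the root vectors so that $(X_\eta,X_{-\eta})=1$; with this choice $h_\eta:=[X_\eta,X_{-\eta}]$ is the element of $\h$ dual to $\eta$ under the form, so that $\eta(h_\eta)=(\eta,\eta)=\Vert\eta\Vert^2$. This is exactly the normalization already used in the proof of Theorem \ref{generalhs}. Recording the affine bracket $[x_{(r)},y_{(s)}]=[x,y]_{(r+s)}+r\,\delta_{r+s,0}(x,y)k$ on $V_k(\g)$, together with $x_{(r)}\vac=0$ for $r\ge 0$, yields the two vanishing facts ${X_{-\eta}}_{(1)}\vac=0$ and ${h_\eta}_{(0)}\vac=0$ that drive the whole argument.

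Next I would isolate the three relations needed, all immediate from the affine bracket: ${[{X_{-\eta}}_{(1)},{X_\eta}_{(-1)}]}=k-{h_\eta}_{(0)}$ (here the central term contributes $(X_{-\eta},X_\eta)k=k$), and ${[{h_\eta}_{(0)},{X_\eta}_{(-1)}]}=\eta(h_\eta){X_\eta}_{(-1)}=\Vert\eta\Vert^2{X_\eta}_{(-1)}$. Commuting ${h_\eta}_{(0)}$ through the $m$ factors of ${X_\eta}_{(-1)}$ in $v_m$ and using ${h_\eta}_{(0)}\vac=0$ then gives the weight identity ${h_\eta}_{(0)}v_m=m\Vert\eta\Vert^2 v_m$, recording that $v_m$ has $h_\eta$-weight $m\Vert\eta\Vert^2$.

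Finally I would prove the formula by induction on $m$. Writing $v_{m+1}={X_\eta}_{(-1)}v_m$ and pushing ${X_{-\eta}}_{(1)}$ past one factor of ${X_\eta}_{(-1)}$ gives
\[
{X_{-\eta}}_{(1)}v_{m+1}={X_\eta}_{(-1)}\big({X_{-\eta}}_{(1)}v_m\big)+(k-{h_\eta}_{(0)})v_m.
\]
The inductive hypothesis evaluates the first term, while the weight identity evaluates $(k-{h_\eta}_{(0)})v_m=(k-m\Vert\eta\Vert^2)v_m$; collecting coefficients and using $\binom{m}{2}+m=\binom{m+1}{2}$ produces the claimed scalar, the base case $m=0$ being simply ${X_{-\eta}}_{(1)}{X_\eta}_{(-1)}\vac=k\vac$. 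Alternatively, one can expand the full commutator $[{X_{-\eta}}_{(1)},({X_\eta}_{(-1)})^{m+1}]=\sum_{j=0}^m({X_\eta}_{(-1)})^j(k-{h_\eta}_{(0)})({X_\eta}_{(-1)})^{m-j}$ and sum the arithmetic progression $\sum_{j=0}^m(m-j)=\binom{m+1}{2}$ directly. There is no genuine obstacle here: the computation is routine, and the only points demanding care are keeping the normalization $(X_\eta,X_{-\eta})=1$ consistent (so the central term is exactly $k$ and the quadratic coefficient is exactly $\Vert\eta\Vert^2$), and correctly accumulating the $h_\eta$-eigenvalues into the triangular number $\binom{m+1}{2}$.
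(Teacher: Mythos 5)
Your proof is correct and is essentially the same as the paper's: both fix the normalization $(X_\eta,X_{-\eta})=1$ and induct on $m$, pushing $(X_{-\eta})_{(1)}$ past one factor of $(X_\eta)_{(-1)}$ via the commutator $k-(h_\eta)_{(0)}$ and using the weight identity $(h_\eta)_{(0)}v_m=m\Vert\eta\Vert^2 v_m$ to accumulate the triangular number $\binom{m+1}{2}$. The closed-form commutator expansion you mention as an alternative is a harmless variant of the same computation.
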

\begin{proof}
We prove the formula by induction on $m$. Assume that $(X_\eta,X_{-\eta})=1$. If $m=0$,
$$
(X_{-\eta})_{(1)}(X_\eta)_{(-1)}\vac=-(h_\eta)_{(0)}\vac+k\vac=kv_0.
$$

If $m>0$,
$$
(X_{-\eta})_{(1)}v_{m+1}=-(h_\eta)_{(0)}v_{m}+kv_{m}+(X_\eta)_{(-1)}(X_{-\eta})_{(1)}v_{m}.
$$
hence
\begin{align*}
(X_{-\eta})_{(1)}v_{m+1}&=-m\Vert\eta\Vert^2v_{m}+kv_{m}+(mk-\Vert\eta\Vert^2\binom{m}{2})v_{m}\\
&=((m+1)k-\Vert\eta\Vert^2\binom{m+1}{2})v_{m}.
\end{align*}
\end{proof}

 Let $\g^0$ be semisimple. Let $\a_p$ be as in Remark \ref{41}.

\begin{prop}\label{43}
If $\a_p$ is long then the decomposition is finite if and only if $k=1$.
\end{prop}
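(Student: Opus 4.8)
The plan is to split the equivalence into its two implications. For $k=1\Rightarrow$ finite I would simply appeal to the classical theory of level-one maximal conformal embeddings: there $V_1(\g)=L_\g(\Lambda_0)$ is integrable and restricts to a finite sum of integrable $\widehat{\g^0}$-modules (see \cite{KWW}, \cite{KS}, \cite{CKMP}). The real content is the converse, so I must show that at every conformal level $k\neq 1$ the decomposition is infinite. By the classification of conformal levels recorded in the tables following Proposition \ref{critnonmax} (equivalently by Theorem \ref{main}), for $\g^0$ semisimple with $\alpha_p$ long the conformal levels are $k=1$ together with, whenever a second level exists, a negative number; when $1$ is the only conformal level the statement is immediate. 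Hence it suffices to exhibit infinitely many $\widehat{\g^0}$-singular vectors whenever $k<0$.

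To do this I would take $\eta=\theta$, the highest root of $\g$, and set $v_m=(X_\theta)^m_{(-1)}\vac$. Since $\alpha_p$ occurs with coefficient $\geq 2$ in $\theta$ we have $X_\theta\in\p$, and as $\theta$ dominates every root, $X_\theta$ is a highest weight vector of $\p$ for the induced positive system of $\g^0$; thus $\eta=\theta$ is admissible in Lemma \ref{actionxeta}, with $\Vert\theta\Vert^2=(\theta,\theta)=2$. I would then verify, by the same computation used in the proof of Theorem \ref{generalhs}, that each $v_m$ is $\widehat{\g^0}$-singular: for a positive simple root $\alpha_i$ of $\g^0$ one has $(X_{\alpha_i})_{(0)}v_m=0$ because $\theta+\alpha_i\notin\Delta$, while for the highest root $\beta$ of a simple ideal of $\g^0$ the vanishing $(X_{-\beta})_{(1)}v_m=0$ follows from the fact that $\theta-\beta$, when it is a root, is positive and satisfies $\theta+(\theta-\beta)\notin\Delta$.

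It remains to prove that all the $v_m$ are nonzero, and here Lemma \ref{actionxeta} with $\eta=\theta$ gives
\[
(X_{-\theta})_{(1)}v_{m+1}=(m+1)(k-m)\,v_m.
\]
Starting from $v_0=\vac\neq 0$, an induction shows that if $v_m\neq 0$ and $k\neq m$ then the right-hand side is nonzero, forcing $v_{m+1}\neq 0$. As $k<0\leq m$ for all $m\geq 0$, the condition $k\neq m$ holds throughout, so $v_m\neq 0$ for every $m$. The $v_m$ are therefore nonzero $\widehat{\g^0}$-singular vectors of pairwise distinct weights, hence linearly independent; since a finite sum of irreducible $\widehat{\g^0}$-modules carries only finitely many independent singular vectors, $V_k(\g)$ cannot decompose finitely.

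The step I expect to be decisive is the non-vanishing argument, because it is exactly there that the hypothesis is used: the recursion breaks only when $k$ is a nonnegative integer, and for $\alpha_p$ long the sole conformal level of this kind is $k=1$ --- where indeed $(X_\theta)^2_{(-1)}\vac=0$ by integrability, in agreement with the finite decomposition. Establishing that every conformal level other than $1$ is strictly negative (rather than a positive integer) is what makes an appeal to the explicit list of conformal levels unavoidable; granting that, the construction above is uniform over all pairs with $\alpha_p$ long.
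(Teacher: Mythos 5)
Your overall strategy --- reduce to negative conformal levels via the classification tables and then exhibit the infinite family $(X_\eta)^m_{(-1)}\vac$ of $\widehat{\g^0}$-singular vectors, with non-vanishing coming from the recursion of Lemma \ref{actionxeta} --- is essentially the paper's, and the logical skeleton (contrapositive plus tables instead of the paper's direct ``finiteness forces $k=M\in\ganz_{\ge 0}$, then quote \cite{KMP}'') would be acceptable. But the construction collapses at its first step: the choice $\eta=\theta$ is not admissible, because $X_\theta$ does \emph{not} lie in $\p$; it lies in $\g^0$. Since $\g^0$ is semisimple, the deleted node $\a_p$ has label $a_p=m\ge 2$ ($m$ prime), and $\g^0$ is the fixed-point algebra of the inner automorphism $\sigma$ acting on the root space $\g_\a$ by $\xi^{c_p(\a)}$, where $c_p(\a)$ is the coefficient of $\a_p$ in $\a$. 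As $c_p(\theta)=a_p\equiv 0 \pmod m$, the space $\g_\theta$ is fixed by $\sigma$; equivalently, in the Borel--de Siebenthal picture $-\theta$ is one of the simple roots of $\g^0$. Your inference is exactly backwards: coefficient divisible by $m$ is what puts $X_\theta$ \emph{inside} $\g^0$. It is only in the non-semisimple situation of Theorem \ref{generalhs}, where $a_p=1$, that $\theta$ is a highest weight of $\p$ --- you appear to have imported that fact into the wrong setting.

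As a consequence your vectors $v_m=(X_\theta)^m_{(-1)}\vac$ lie in $\widetilde V_k(\g^0)$ and, worse, are not singular: $\theta$ is the highest root $\theta_j$ of one of the simple ideals of $\g^0$, so the affine bracket produces the central term your verification overlooks, namely $(X_{-\theta})_{(1)}(X_\theta)_{(-1)}\vac=k(X_{-\theta},X_\theta)\vac\ne 0$ for $k\ne 0$; your check of $(X_{-\beta})_{(1)}v_m=0$ via root strings silently assumes $\beta\ne\theta$. The repair is the paper's choice of $\eta$: take $\eta=\mu_i$, the highest weight of the component $V(\mu_i)$ of $\p$ whose weights include $\a_p$. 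Then $\eta$ is again a long root (this is where ``$\a_p$ long'' enters, giving $\Vert\eta\Vert^2=2$ so your recursion $(X_{-\eta})_{(1)}v_{m+1}=(m+1)(k-m)v_m$ is unchanged), and since $\eta$ is a weight of $\p$ it is distinct from every $\theta_j$, so no central term arises; singularity then follows from the root-string argument ($\eta$ long forces $|(\theta_j,\eta^\vee)|\le 1$, hence $-\theta_j+2\eta\notin\D$ and $[[X_{-\theta_j},X_\eta],X_\eta]=0$), which is the genuine content you need and which your sketch does not supply. With that substitution your argument goes through; note also that the paper's own route needs neither the tables nor negativity of the second conformal level, since the vanishing pattern directly yields $k=M\ge 0$ and \cite{KMP} excludes positive integral levels other than $1$.
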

\begin{proof}
By \cite{Kac}, the decomposition is finite if $k=1$ and $\a_p$ is long. 
Assume now that the decomposition is finite. Since $\a_p$ is a weight of $\p$, there is $\mu_i$ in the decomposition $\p=\oplus_jV(\mu_j)$ such that 
$\a_p$ occurs as a weight of $V(\mu_i)$. Set $\eta=\mu_i$. Since $\a_p$ is a long root, the same holds for $\eta$. Let $\theta_j$ be the highest root of  $\g^0_j$. Since $\eta$ is the highest weight of $\p$, $\eta+\theta_j\not\in\D$. Since $\eta$ is long $|(\theta_j,\eta^\vee)|\le 1$. This implies that $-\theta_j+2\eta\not\in \D$. Since $-\theta_j-\eta$ is not a root, the $\eta$-root string through  $-\theta_j$ is $-\theta_j,\ldots, -\theta_j+q\eta$. Since $-q=-(\theta_j,\eta^\vee)$, we see that $q\le 1$. 

We now prove by induction on $m$, that the vectors $v_m$ of Lemma \ref{actionxeta} are $\widehat{\g^0}$-singular. If $m=0$ this is obvious.
Let $\D_0$ be the $(\g^0,\h)$-root system and let $\Dp_0=\Dp\cap\D_0$ be a set of positive roots.
It is clear that if $\a\in \Dp_0$ then $\eta+\a$ is not a root (recall that $\eta$ is a highest weight in $\p$), so $[(X_\a)_{(0)},(X_\eta)_{(-1)}]=0$. This implies that, if $m>0$,   $(X_\a)_{(0)}v_m=0$, so we need only to check that $(X_{-\theta_j})_{(1)}v_m=0$ for all $j$.
Clearly
\begin{align*}(X_{-\theta_j})_{(1)}v_m&=[X_{-\theta_j},X_\eta]_{(0)}v_{m-1}+(X_\eta)_{(-1)}(X_{-\theta_j})_{(1)}v_{m-1}\\&=[X_{-\theta_j},X_\eta]_{(0)}v_{m-1}.
\end{align*}
The last equality follows from the induction hypothesis.
Since $-\theta_j+2\eta$ is not a root we see that
$[[X_{-\theta_j},X_\eta],X_\eta]=0$. This implies that 
$$
[X_{-\theta_j},X_\eta]_{(0)}v_{m-1}=(X_\eta)^{m-1}_{(1)}[X_{-\theta_j},X_\eta]_{(0)}\vac=0
$$
as desired. Since the decomposition is finite, the vectors $v_m$ must span a finite dimensional space.  Since they have different weights, they are independent if nonzero, so they are almost all zero. Let  $M$ be such that $v_M\ne0$ and $v_m=0$ for $m>M$. By Lemma \ref{actionxeta}, since $\Vert\eta\Vert^2=2$,
$$
0=(X_{-\eta})_{(1)}v_{M+1}=(M+1)(k-M)v_{M}.
$$
so $k=M$. By \cite{KMP}, finite decomposition at positive integral levels can happen only if $k=1$.
\end{proof}

It remains to deal with the cases with $\a_p$ short. Looking at tables in the previous section these are $C_{h}\times C_{n-h}\hookrightarrow C_n$, $D_n\hookrightarrow B_n$, $A_2\hookrightarrow G_2$, and $B_4\hookrightarrow F_4$. 
The latter three cases are dealt with in the work of Adamovi\'c and Per\v{s}e (see \cite[3.5]{KMP} �for a thorough discussion and precise attributions). The first case is analyzed in the next subsection.
\subsection{$C_{h}\times C_{n-h}$} Consider the level $-\tfrac{1}{2}$ case. Then Theorem \ref{general} applies and the decomposition is finite. \par
Let us deal with level $-1-\tfrac{n}{2}$. Realize as usual the root system of type $C_n$ in terms of the standard basis in $\mathbb R^n$ as
$\pm\{\e_i\pm\e_j,\mid i\ne j\}\cup\pm\{2\e_i\}$.
We can choose  root vectors in such a way that the following relations hold:
\begin{align*}
&[X_{-2\e_1},X_{\e_1+\e_{h+1}}]=-X_{-\e_1+\e_{h+1}}, &&[X_{-\e_1+\e_{h+1}},X_{\e_1+\e_{h+1}}]=2X_{2\e_{h+1}},\\
&[X_{-2\e_{h+1}},X_{\e_1+\e_{h+1}}]=-X_{\e_1-\e_{h+1}}, && [X_{\e_1-\e_{h+1}},X_{\e_1+\e_{h+1}}]=2X_{2\e_1},\\
& [X_{-\e_1+\e_{h+1}},X_{2\e_1}]=X_{\e_1+\e_{h+1}}, &&[X_{\e_1-\e_{h+1}},X_{2\e_{h+1}}]=X_{\e_1+\e_{h+1}}.
\end{align*}
Moreover
$$
(X_{2\e_i},X_{-2\e_i})=1,\ i=1,\ldots,n.
$$
With this choice of root vectors set $$v_{i,j}=(X_{\e_1+\e_{h+1}})^i_{(-1)}(X_{2\e_1})^j_{(-1)}(X_{2\e_{h+1}})^j_{(-1)}\vac.$$
\begin{lemma}\label{action2e1}
\begin{equation}\label{e1}(X_{-2\e_1})_{(1)}v_{i,j}=(k-i-j+1)j(X_{2\e_{h+1}})_{(-1)}v_{i,j-1}-i(i-1)(X_{2\e_{h+1}})_{(-1)}v_{i-2,j}.
\end{equation}
\begin{equation}(X_{-2\e_{h+1}})_{(1)}v_{i,j}=(k-i-j+1)j(X_{2\e_1})_{(-1)}v_{i,j-1}-i(i-1)(X_{2\e_1})_{(-1)}v_{i-2,j}.
\end{equation}
\begin{equation}\label{-e-e}(X_{-\e_1-\e_{h+1}})_{(1)}v_{i,j}=i(2k-(i-1)-4j)v_{i-1,j}-j^2v_{i+1,j-1}.
\end{equation}
\end{lemma}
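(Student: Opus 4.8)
The plan is to derive all three identities by pushing the lowering operator straight through the product $A^iB^jC^j\vac$, where I abbreviate $A=(X_{\e_1+\e_{h+1}})_{(-1)}$, $B=(X_{2\e_1})_{(-1)}$ and $C=(X_{2\e_{h+1}})_{(-1)}$. The only tool is the affine bracket $[x_{(m)},y_{(n)}]=[x,y]_{(m+n)}+m\delta_{m+n,0}(x,y)K$, with $K$ acting as $k$ on $V_k(\g)$. First I record two structural facts. Since $(\e_1+\e_{h+1})+2\e_1$, $(\e_1+\e_{h+1})+2\e_{h+1}$ and $2\e_1+2\e_{h+1}$ are not roots, the operators $A,B,C$ pairwise commute, so $v_{i,j}=A^iB^jC^j\vac$ is unambiguous. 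Moreover, once the negative root vectors are normalized so that $(X_\gamma,X_{-\gamma})=2/(\gamma,\gamma)$ (consistent with the stated value $(X_{2\e_i},X_{-2\e_i})=1$, since $(2\e_i,2\e_i)=2$), one has $(X_{\e_1+\e_{h+1}},X_{-\e_1-\e_{h+1}})=2$; and a one-line computation with the invariance of the form together with the listed relations gives the auxiliary constants, e.g. $[X_{-\e_1-\e_{h+1}},X_{2\e_1}]=-X_{\e_1-\e_{h+1}}$.

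Next I tabulate the first-order brackets needed. For \eqref{e1} the relevant ones are $[(X_{-2\e_1})_{(1)},A]=-(X_{-\e_1+\e_{h+1}})_{(0)}$, $[(X_{-2\e_1})_{(1)},B]=(h_{2\e_1})_{(0)}+k$ and $[(X_{-2\e_1})_{(1)},C]=0$, together with the brackets of the secondary operator $F:=(X_{-\e_1+\e_{h+1}})_{(0)}$, namely $[F,A]=2C$, $[F,B]=A$, $[F,C]=0$. For \eqref{-e-e} one further needs $[(X_{-\e_1-\e_{h+1}})_{(1)},A]=(h_{\e_1+\e_{h+1}})_{(0)}+2k$ and $[(X_{-\e_1-\e_{h+1}})_{(1)},B]=-(X_{\e_1-\e_{h+1}})_{(0)}$, along with the brackets of $(X_{\e_1-\e_{h+1}})_{(0)}$ with $A,B,C$. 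All the Cartan modes act diagonally: on $v_{i,j}$, which has weight $(i+2j)(\e_1+\e_{h+1})$, the eigenvalue of $(h_\gamma)_{(0)}$ is read off directly from this weight.

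Each formula is then obtained from the Leibniz rule $[x_{(m)},uw]=[x_{(m)},u]w+u[x_{(m)},w]$ and $(X_{-\delta})_{(1)}\vac=0$, with no induction required. For \eqref{e1} I split $(X_{-2\e_1})_{(1)}A^iB^jC^j\vac=[(X_{-2\e_1})_{(1)},A^i]B^jC^j\vac+A^i(X_{-2\e_1})_{(1)}B^jC^j\vac$. In the second summand the operator passes to the $B$'s, producing the factor $j(k-j+1)$ and leaving a surplus $C$, i.e. a term $j(k-j+1)(X_{2\e_{h+1}})_{(-1)}v_{i,j-1}$. In the first summand each of the $i$ factors $A$ turns $(X_{-2\e_1})_{(1)}$ into $-F$; carrying $F$ onward, $[F,A]=2C$ and $[F,B]=A$ produce exactly $-ij(X_{2\e_{h+1}})_{(-1)}v_{i,j-1}$ and $-i(i-1)(X_{2\e_{h+1}})_{(-1)}v_{i-2,j}$. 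Adding the two contributions to $v_{i,j-1}$ gives coefficient $j(k-i-j+1)$, which is \eqref{e1}. The identity for $(X_{-2\e_{h+1}})_{(1)}$ is the image of \eqref{e1} under the involution $\e_1\leftrightarrow\e_{h+1}$, which exchanges $B\leftrightarrow C$ and preserves both $v_{i,j}$ and the whole list of relations, so it needs no separate argument.

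I expect \eqref{-e-e} to be the main obstacle, since $(X_{-\e_1-\e_{h+1}})_{(1)}$ feeds two channels at once. Splitting off the $A$'s first gives, via $(h_{\e_1+\e_{h+1}})_{(0)}+2k$ and the non-commutativity $[(h_{\e_1+\e_{h+1}})_{(0)},A]=-2A$, the diagonal term $i(2k-(i-1)-4j)v_{i-1,j}$; here $-(i-1)$ records the $A$'s the Cartan mode must cross and $-4j$ is its eigenvalue on $B^jC^j\vac$. The other channel, obtained by hitting a $B$ to create $(X_{\e_1-\e_{h+1}})_{(0)}$ and then hitting a $C$ to convert it into an $A$ via $[X_{\e_1-\e_{h+1}},X_{2\e_{h+1}}]=X_{\e_1+\e_{h+1}}$, yields $-j^2v_{i+1,j-1}$, the sign coming from $[X_{-\e_1-\e_{h+1}},X_{2\e_1}]=-X_{\e_1-\e_{h+1}}$ and the factor $j\cdot j$ from the independent choices of a $B$ and a $C$. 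The delicate points are exactly these coefficients: the shift $-(i-1)-4j$ requires tracking the commutator of the Cartan mode with $A$, while obtaining $-j^2$ rather than $-j(j-1)$ relies on the fact that $(X_{\e_1-\e_{h+1}})_{(0)}$ commutes with $B$, so that all $j$ of the $C$'s contribute.
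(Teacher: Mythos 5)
Your proposal is correct and takes essentially the same approach as the paper: the paper's own proof reads only ``Direct computation, by induction on $i+j$,'' and your Leibniz-rule expansion is exactly that computation carried out explicitly (your bracket table and all three final coefficients, including $j(k-i-j+1)$, $-i(i-1)$, $i(2k-(i-1)-4j)$ and $-j^2$, check out against the affine commutation relations). You also correctly supply the one datum the paper leaves implicit, namely the normalization $(X_{\e_1+\e_{h+1}},X_{-\e_1-\e_{h+1}})=2$, equivalently $[X_{-\e_1-\e_{h+1}},X_{2\e_1}]=-X_{\e_1-\e_{h+1}}$ by invariance of the form, without which the central contribution $2k$ in the third identity would not come out as stated.
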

\begin{proof}
Direct computation, by  induction  on $i+j$.
\end{proof}
\begin{lemma}Consider $v_{i,j}$ as an element of $V^k(\g)$ and let $S_m$  be the linear span of $\{v_{i,j}\mid i+2j=m\}$. 
Set
$$
v_m=\begin{cases}\sum\limits_{s=0}^t \frac{\binom{k-t+1}{s}\binom{t}{s}}{\binom{2s}{s}}v_{2s,t-s},\quad &\text{if $m=2t$,}\\
\sum\limits_{s=0}^t \frac{\binom{k-t}{s}\binom{t}{s}}{(s+1)\binom{2s+1}{s}}v_{2s+1,t-s}\quad &\text{if $m=2t+1$.}
\end{cases}
$$
Then
\begin{enumerate}
\item $\C v_m$ is the space of   $\widehat{\g^0}$-singular vector in $S_m$.
\item  
$
(X_{-\e_1-\e_{h+1}})_{(1)}v_m=c_mv_{m-1}
$
with
$$c_m=\begin{cases}
\frac{m}{2}(m^2+(-4-3k)m+2k^2+5k+3)&\text{if $m$ is even,}\\
2k-2m+2&\text{if $m$ is odd.}
\end{cases}
$$
\end{enumerate}
\end{lemma}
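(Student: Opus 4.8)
The plan is to treat the two assertions separately and to carry out every identification inside the universal vertex algebra $V^k(\g)$, where the ordered monomials $w_{a,b,c}=(X_{\e_1+\e_{h+1}})^a_{(-1)}(X_{2\e_1})^b_{(-1)}(X_{2\e_{h+1}})^c_{(-1)}\vac$ are linearly independent. Note that $v_{i,j}=w_{i,j,j}$ and that every $v_{i,j}$ with $i+2j=m$ carries the same $\h$-weight $m(\e_1+\e_{h+1})$, so $S_m$ is a single weight space. Here $\g^0=C_h\times C_{n-h}$, the two factors being supported on $\{\e_1,\dots,\e_h\}$ and $\{\e_{h+1},\dots,\e_n\}$; the highest weight of $\p$ is $\eta=\e_1+\e_{h+1}$ and the highest roots of the two factors are $\theta_1=2\e_1$, $\theta_2=2\e_{h+1}$, so the relevant affine raising operators are $(X_{-2\e_1})_{(1)}$ and $(X_{-2\e_{h+1}})_{(1)}$.

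First I would dispose of the finite raising operators once and for all: for every positive root $\a$ of $\g^0$ none of the weights $\a+\eta$, $\a+2\e_1$, $\a+2\e_{h+1}$ is a root (because $\eta$ is the top weight of $\p$ while $2\e_1,2\e_{h+1}$ are the top roots of the factors). Hence $(X_\a)_{(0)}$ commutes with all three generators, annihilates $\vac$, and so kills every element of $S_m$. Consequently a vector of $S_m$ is $\widehat{\g^0}$-singular if and only if it is annihilated by $(X_{-2\e_1})_{(1)}$ and $(X_{-2\e_{h+1}})_{(1)}$.

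For (1) I would expand $(X_{-2\e_1})_{(1)}v$ with $v=\sum_s c_s v_{2s,t-s}$ (even case $m=2t$) using \eqref{e1}. The two output families $(X_{2\e_{h+1}})_{(-1)}v_{2s,t-s-1}=w_{2s,t-s-1,t-s}$ and $(X_{2\e_{h+1}})_{(-1)}v_{2s-2,t-s}=w_{2s-2,t-s,t-s+1}$ telescope, the first at index $s$ coinciding with the second at index $s+1$; the boundary terms vanish automatically ($t-s=0$ at $s=t$, and $i(i-1)=0$ at $s=0$). Vanishing of the coefficient of each $w_{a,b,c}$ is thus exactly the two-term recursion $c_{s+1}(2s+2)(2s+1)=c_s(k-t-s+1)(t-s)$. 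Since $(2s+2)(2s+1)\neq0$, this determines all $c_s$ from $c_0$, so the kernel of $(X_{-2\e_1})_{(1)}$ in $S_m$ is already one-dimensional; the displayed coefficients (normalized by $c_0=1$) solve the recursion, and the $\e_1\leftrightarrow\e_{h+1}$ symmetry gives $(X_{-2\e_{h+1}})_{(1)}v_m=0$ for free. As $c_0=1$ and the $w_{a,b,c}$ are independent, $v_m\neq0$, whence the singular space equals $\C v_m$. The odd case $m=2t+1$ is handled identically with $v=\sum_s c_s v_{2s+1,t-s}$, the recursion becoming $c_{s+1}(2s+3)(2s+2)=c_s(k-t-s)(t-s)$.

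For (2) I would first note that $[X_{-2\e_1},X_{-\eta}]=0=[X_{-2\e_{h+1}},X_{-\eta}]$, since $-3\e_1-\e_{h+1}$ and $-\e_1-3\e_{h+1}$ are not roots; by the affine commutation relations $(X_{-\eta})_{(1)}$ therefore commutes with both $(X_{-2\e_1})_{(1)}$ and $(X_{-2\e_{h+1}})_{(1)}$. Combined with (1), this shows $(X_{-\eta})_{(1)}v_m$ is annihilated by both affine raising operators, and by \eqref{-e-e} it is a combination of the $v_{i-1,j}$ and $v_{i+1,j-1}$, hence lies in $S_{m-1}$ and is automatically killed by the finite raising operators as well. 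Thus $(X_{-\eta})_{(1)}v_m$ is singular in $S_{m-1}$, and (1) forces $(X_{-\eta})_{(1)}v_m=c_mv_{m-1}$. To evaluate $c_m$ I would compare the coefficient of one conveniently isolated basis vector: in the even case the coefficient of $v_{1,t-1}$ receives contributions only from the $s=0$ and $s=1$ terms of $v_m$, giving $c_m=-t^2+t(k-t+1)(2k+3-4t)$, which expands to the stated even-case value; in the odd case the coefficient of $v_{0,t}$ comes solely from the $s=0$ term, giving $c_m=2k-4t=2k-2m+2$. The only delicate points are arranging the telescoping in (1) so that the boundary terms disappear and checking the two binomial-ratio identities behind the recursions; the rest is bookkeeping with Lemma~\ref{action2e1}.
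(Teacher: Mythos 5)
Your proof is correct, and for part (1) it is essentially the paper's argument: reduce singularity on the weight space $S_m$ to annihilation by $(X_{-2\e_1})_{(1)}$ and $(X_{-2\e_{h+1}})_{(1)}$, expand via Lemma \ref{action2e1} so that the two output families telescope, deduce one-dimensionality of the kernel from the resulting two-term recursion, and check that the displayed binomial coefficients satisfy it (your recursion $c_{s+1}(2s+2)(2s+1)=c_s(k-t-s+1)(t-s)$ is exactly the paper's identity $\tfrac{(k-s-t+1)(t-s)}{2(s+1)(2s+1)}a_{s,t}=a_{s+1,t}$); the coefficient extraction for $c_m$ from $v_{1,t-1}$, resp.\ $v_{0,t}$, is also identical, and your intermediate value $-t^2+t(k-t+1)(2k+3-4t)$ agrees with the paper's $-(m/2)^2+(2k-2m+3)(k-m/2+1)m/2$.

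Where you genuinely diverge is the proof that $u=(X_{-\e_1-\e_{h+1}})_{(1)}v_m$ is $\widehat{\g^0}$-singular. The paper reduces this to the two zero modes $(X_{\e_1-\e_2})_{(0)}$ and $(X_{\e_{h+1}-\e_{h+2}})_{(0)}$ (the only raising generators whose bracket with $X_{-\e_1-\e_{h+1}}$ survives) and then kills these by a separate direct computation, namely showing $(X_{-\e_2-\e_{h+1}})_{(1)}v_{i,j}=0$ by pushing the operator through the monomial. You instead observe that by \eqref{-e-e} the vector $u$ already lies in $S_{m-1}$, that every element of every $S_m$ is annihilated by all finite raising operators $(X_\a)_{(0)}$, $\a\in\Dp_0$ (your blanket root-string observation, which the paper uses only implicitly in part (1)), and that the two affine conditions follow from $[X_{-2\e_1},X_{-\e_1-\e_{h+1}}]=[X_{-2\e_{h+1}},X_{-\e_1-\e_{h+1}}]=0$, there being no central term in $[x_{(1)},y_{(1)}]$. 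This is a cleaner route: it makes part (2) a purely structural consequence of part (1) plus two root-string checks and eliminates the paper's extra inductive computation, at no loss of rigor since both proofs need $u\in S_{m-1}$ anyway to invoke one-dimensionality and conclude $u=c_mv_{m-1}$.
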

\begin{proof}Assume first $m=2t$ even.
In order to prove that $v_m$ is $\widehat{\g^0}$-singular we need only to check that
$$
(X_{-2\e_1})_{(1)}v_{m}=(X_{-2\e_{h+1}})_{(1)}v_{m}=0.
$$
Set  $a_{s,t}=\frac{\binom{k-t+1}{s}\binom{t}{s}}{\binom{2s}{s}}$. Then, by Lemma \ref{action2e1}, 
\begin{align}\notag
&(X_{-2\e_1})_{(1)}v_{m}
=\sum_{s=0}^{t-1} a_{s,t}(k-s-t+1)(t-s)(X_{2\e_{h+1}})_{(-1)}v_{2s,t-s-1}\\\notag
&-2\sum_{s=1}^t a_{s,t}s(2s-1)(X_{2\e_{h+1}})_{(-1)}v_{2s-2,t-s}=\\\label{ezero}
&(X_{2\e_{h+1}})_{(-1)}\sum_{s=0}^{t-1} (a_{s,t}(k-s-t+1)(t-s)-a_{s+1,t}2(s+1)(2s+1))v_{2s,t-s-1}.
\end{align}
Since
$\frac{(k-s-t+1)(t-s)}{2(s+1)(2s+1)}a_{s,t}=a_{s+1,t}, 
$
\eqref{ezero} Êvanishes, as required.
The same computation shows that $(X_{-2\e_{h+1}})_{(1)}v_{m}=0$.

Let $v=\sum_{s=0}^t c_{s,t}v_{2s,t-s}$ be $\widehat{\g^0}$-singular. Then the computation above and the fact that the $v_{i,j}$ are linearly independent show that
$$
(k-s-t+1)(t-s)c_{s,t}=2(s+1)(2s+1)c_{s+1,t},
$$
so $v$ is determined by the choice of $c_{0,t}$, thus  the space  of $\widehat{\g^0}$-singular vectors in $S_m$ is one-dimensional. This proves (1).
\par

Next observe that 
$(X_{-\e_1-\e_{h+1}})_{(1)}v_m$ is $\widehat{\g^0}$-singular. It suffices to prove  that
$$(X_{\e_1-\e_2})_{(0)}(X_{-\e_1-\e_{h+1}})_{(1)}v_m=(X_{\e_{h+1}-\e_{h+2}})_{(0)}(X_{-\e_1-\e_{h+1}})_{(1)}v_m=0.$$ Let us check only that  $(X_{\e_1-\e_2})_{(0)}(X_{-\e_1-\e_{h+1}})_{(1)}v_m=0$, since the other equality is obtained similarly. The latter follows at once from the fact that 
$$
(X_{\e_1-\e_2})_{(0)}(X_{-\e_1-\e_{h+1}})_{(1)}v_m=(X_{-\e_2-\e_{h+1}})_{(1)}v_m
$$
and that 
\begin{align*}
(X_{-\e_2-\e_{h+1}})&_{(1)}v_{i,j}=i(X_{\e_1+\e_{h+1}})^{i-1}_{(-1)}(X_{2\e_1})^j_{(-1)}(X_{2\e_{h+1}})^j_{(-1)}(X_{\e_1-\e_2})_{(0)}\vac\\&+j(X_{\e_1+\e_{h+1}})^{i}_{(-1)}(X_{2\e_1})^j_{(-1)}(X_{2\e_{h+1}})^{j-1}_{(-1)}(X_{-\e_2+\e_{h+1}})_{(0)}\vac=0.
\end{align*}
It follows from \eqref{-e-e} that $(X_{-\e_1-\e_{h+1}})_{(1)}v_m\in S_{m-1}$, so, since the space  of $\widehat{\g^0}$-singular vectors in $S_{m-1}$ is one-dimensional, $(X_{-\e_1-\e_{h+1}})_{(1)}v_m=c_m v_{m-1}$. To compute the coefficient $c_m$ we need only to compute the coefficient of $v_{1,m/2-1}$ in $(X_{-\e_1-\e_{h+1}})_{(1)}v_m$ if $m$ is even and the coefficient of $v_{0,(m-1)/2}$ in $(X_{-\e_1-\e_{h+1}})_{(1)}v_m$ if $m$ is odd. By \eqref{-e-e} this coefficient is $-(m/2)^2+(2k-2m+3)(k-m/2+1)m/2$ if $m$ is even and it is equal to $(2k-2(m-1))$ if $m$ is odd.
\end{proof}

\begin{cor}\label{46} In type $C_n$ with $n\ge 2$, if $k=-1-\tfrac{n}{2}$, then, for each $m\ge0$, $v_m$ projects to a nonzero $\widehat{\g^0}$-singular vector in $V_k(\g)$. In particular, the decomposition of $V_k(\g)$ as $\widehat{\g^0}$-module cannot be finite.
\end{cor}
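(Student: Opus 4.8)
The plan is to leverage the two conclusions of the preceding lemma: that $\C v_m$ is the full space of $\widehat{\g^0}$-singular vectors in $S_m$, and that the lowering operator satisfies $(X_{-\e_1-\e_{h+1}})_{(1)}v_m=c_mv_{m-1}$ with the explicit coefficient $c_m$. Since these statements hold in the universal algebra $V^k(\g)$, and $\widehat{\g^0}$-singularity is preserved under the canonical surjection $\pi\colon V^k(\g)\to V_k(\g)$, the image $\pi(v_m)$ is automatically $\widehat{\g^0}$-singular. The entire content of the corollary is therefore the \emph{nonvanishing} of $\pi(v_m)$ in the simple quotient, and this is where I expect the main difficulty to lie.

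To prove nonvanishing I would argue by minimal counterexample. Since $\pi(v_0)=\vac\neq 0$, suppose $\pi(v_M)=0$ for some least $M\ge 1$, so that $\pi(v_{M-1})\neq 0$. Applying $\pi$ to the relation in part (2) of the lemma and using that $\pi$ intertwines the $\ga$-action gives
$$
0=(X_{-\e_1-\e_{h+1}})_{(1)}\pi(v_M)=c_M\,\pi(v_{M-1}),
$$
whence $c_M=0$. Thus it suffices to check that $c_m\neq 0$ for every $m\ge 1$ at the level $k=-1-\tfrac n2$.

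The key computation is then the verification that $c_m\neq 0$, which reduces to elementary arithmetic once the quadratic in the even case is factored. For $m$ odd one has $c_m=2(k-m+1)=-n-2m<0$, so $c_m\neq 0$. For $m$ even, I would factor the quadratic appearing in $c_m$ as
$$
c_m=\tfrac m2\,(m-(2k+3))(m-(k+1)),
$$
noting that $2k^2+5k+3=(2k+3)(k+1)$ and that the linear coefficient $4+3k$ is the sum of the two roots. At $k=-1-\tfrac n2$ the roots become $m=1-n$ and $m=-\tfrac n2$, both strictly negative since $n\ge 2$; hence $c_m\neq 0$ for all even $m\ge 2$. This establishes $\pi(v_m)\neq 0$ for all $m$.

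Finally I would conclude that the decomposition is infinite. The $v_m$ are $\widehat{\g^0}$-singular vectors of pairwise distinct weight: for $i+2j=m$ the vector $v_{i,j}$ has $\h$-weight $m(\e_1+\e_{h+1})$ and conformal weight $m$, so $\pi(v_m)$ sits in a weight space determined by $m$. Were $V_k(\g)$ a finite direct sum of irreducible highest weight $\widehat{\g^0}$-modules, it would contain only finitely many lines of singular vectors; since $\{\pi(v_m)\mid m\ge 0\}$ supplies infinitely many, of distinct weight, the decomposition cannot be finite. The only genuine obstacle in this chain is the sign analysis of $c_m$, which the factorization above renders transparent.
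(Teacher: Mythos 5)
Your proposal is correct and follows essentially the same route as the paper: an induction (your minimal-counterexample formulation is equivalent) on the relation $(X_{-\e_1-\e_{h+1}})_{(1)}v_m=c_mv_{m-1}$ reducing everything to $c_m\neq 0$, the same sign/root analysis of $c_m$ at $k=-1-\tfrac n2$ (your factorization $2k^2+5k+3=(2k+3)(k+1)$ just packages the paper's computation of the roots $1-n$ and $-\tfrac n2$), and the same linear-independence-by-distinct-weights conclusion for infinitude of the decomposition.
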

\begin{proof}For the first statement, we need only to check that $c_m\ne 0$ if $m\ge1$. In fact, since $
(X_{-\e_1-\e_{h+1}})_{(1)}v_m=c_mv_{m-1}$, the result follows by an obvious induction.

If $k=-1-\tfrac{n}{2}$ and $m$ is odd, then $c_m=-n-2m<0$. If $m$ is even, then we need to check that
\begin{equation}\label{equaz}
m^2+(-1+\tfrac{3}{2}n)m+\tfrac{n(n-1)}{2}\ne 0.
\end{equation}
Solving for $m$ the equation $
m^2+(-1+\tfrac{3}{2}n)m+\tfrac{n(n-1)}{2}= 0
$, we find $m=1-n$ or $m=-\tfrac{n}{2}$, hence \eqref{equaz} holds.

Since the vectors $v_m$ have different weights, they are linearly independent, thus the second statement follows.
\end{proof}
\begin{prop}\label{cw} If $\g^0$ is a maximal semisimple equal rank subalgebra of $\g$ and  $k$ is a conformal level, then there are infinitely many $\widehat{\g^0}$-singular vectors in $V_k(\g)$ if and only if there is a $\widehat{\g^0}$-singular vector  in $V_k(\g)$ having  conformal weight  $2$.
\end{prop}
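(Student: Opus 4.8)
The plan is to replace the statement by the equivalent dichotomy \emph{$V_k(\g)$ has infinitely many $\widehat{\g^0}$-singular vectors} if and only if \emph{$V_k(\g)$ decomposes infinitely}, and then to read off the conclusion from the complete description of finite and infinite conformal levels obtained in this section. Since $V_k(\g)$ is a completely reducible $\widehat{\g^0}$-module in our setting and each irreducible summand $L_{\g^0}(\nu)$ contains a unique (up to scalar) singular vector, namely its highest weight vector, the number of singular vectors equals the number of summands. Moreover, since the embedding is conformal we have $\omega_\g=\omega_{\g^0}$, so the conformal weight of the highest weight vector of a summand $L_{\g^0}(\nu)$ equals its $\widehat{\g^0}$-Sugawara value $\D_\nu$; hence a singular vector of conformal weight $2$ is the same thing as a summand $L_{\g^0}(\nu)$ with $\D_\nu=2$. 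Thus it suffices to prove that at an infinite conformal level there is a singular vector of conformal weight $2$, and that at a finite conformal level there is none.

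First I would dispose of the infinite levels. By Proposition \ref{43}, Corollary \ref{46} and Theorem \ref{mainss}, together with the tables of Section \ref{tre}, the conformal levels at which the decomposition is infinite are exactly the levels $k\ne1$ with $\a_p$ long, and the level $k=-1-\tfrac n2$ of the embedding $C_h\times C_{n-h}\subset C_n$. In each of these cases a family $\{v_m\}_{m\ge0}$ of $\widehat{\g^0}$-singular vectors has already been exhibited, with $v_m$ lying in the conformal weight $m$ subspace: these are the vectors $v_m=(X_\eta)^m_{(-1)}\vac$ of Lemma \ref{actionxeta} and Proposition \ref{43} in the long case, and the vectors $v_m$ of Corollary \ref{46} in the symplectic case. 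In both situations $v_m\ne0$ for all $m$, which is precisely what is proved there (note that the relevant levels are negative, so the vanishing conditions are never met); in particular $v_2$ is a nonzero singular vector of conformal weight $2$, as wanted.

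Next I would treat the finite levels, showing that no singular vector of conformal weight $2$ occurs. At $k=1$ the decomposition is $V_1(\g)=\bigoplus_{i=0}^{m-1}L_{\g^0}(\mu_i)$, and each $\mu_i$ is either $0$ or a weight of $\p$, so by conformality its summand has conformal weight $0$ or $1$. For $C_h\times C_{n-h}\subset C_n$ at level $-\tfrac12$, Theorem \ref{general} gives $V_{-1/2}(C_n)=L_{\g^0}(0)\oplus L_{\g^0}(\mu_1)$, again with conformal weights $0$ and $1$. There remain the three embeddings $D_n\subset B_n$, $A_2\subset G_2$ and $B_4\subset F_4$ of Theorem \ref{mainss}; for these I would invoke the explicit finite decompositions recorded in Section \ref{6} and check that every summand is one of the eigenspace modules $L_{\g^0}(\mu_i)$, $i=0,\dots,m-1$, whence its conformal weight is $0$ or $1$.

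The one step that is not purely formal is this last verification, which I expect to be the main obstacle: a priori a finite decomposition could contain, besides the eigenspace modules, an extra summand $L_{\g^0}(\nu)$ with $\nu$ occurring in some tensor product $V(\mu_i)\otimes V(\mu_j)$, and such a $\nu$ could have $\D_\nu=2$. To close this gap I would argue exactly as in the finiteness criterion of Remark \ref{conditionsfinite}: for each of the three exceptional pairs the conformal weight $2$ subspace of the even (resp.\ suitably graded) part contains no $\widehat{\g^0}$-primitive vector of such a weight $\nu$, so that condition \eqref{uvj-2} holds for the finitely many $\nu_r$ of conformal weight $2$ and no new summand can appear there. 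Equivalently, one reads the absence of a weight $2$ summand directly from the explicit decompositions of Section \ref{6}. Combining the two halves yields the asserted equivalence.
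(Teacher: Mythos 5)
Your proposal is correct and follows essentially the same route as the paper's proof: both reduce the statement to the finite/infinite decomposition dichotomy, observe that in the finite cases (via the level~$1$ theory and Theorem~\ref{general}) the singular vectors are exactly $\vac$ and the vectors $x_{(-1)}\vac$, of conformal weight $0$ or $1$, and in the infinite cases note that the explicitly constructed families of singular vectors from Proposition~\ref{43} and Corollary~\ref{46} contain a vector of conformal weight $2$ (namely $v_2$). You merely spell out the details the paper compresses into ``it is easy to check,'' including the verification for the exceptional short-root pairs via the explicit decompositions of Section~\ref{6}.
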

\begin{proof} Recall that 
$\g^0$ is the fixed-points subalgebra of an automorphism $\sigma$ of $\g$ of finite order. 
If the decomposition is finite then the singular vectors are $\vac$ and the vectors $x_{(-1)}\vac$ with $x$ an highest weight vector for the components $V(\mu_i)$ of $\p$. In fact,
if  $k=1$, then we know from \cite{Kac} that there are  $a_p$ summands ($a_p$ being the label of $\a_p$) in the $\widehat{\g^0}$-decomposition of $V_k(\g)$. If $x$ is an highest weight vector for $V(\mu_i)$, then $x_{(-1)}\vac$ is a singular vector for $\widehat{\g^0}$. Since $a_p$ coincides with the order of $\s$, these singular vectors give the whole decomposition. If $k\ne 1$, we apply Theorem \ref{general}, to obtain the same result.

If the decomposition is not finite, then it is easy to check that at least one of the infinitely many singular vectors that we constructed has conformal weight $2$.
\end{proof}

\begin{rem}
In Appendix \ref{6} we give the explicit decompositions for all the cases where finite decomposition occurs.
\end{rem}
\section{Finite decomposition for maximal equal rank reductive embeddings}\label{FDHS}
Assume that $\g^0$ is a maximal equal rank subalgebra of $\g$ such that the center $\g^0_0$ of $\g^0$ is nonzero. Let $\varpi$ and $\zeta$ be as in Section \ref{AeP}.
As shown in \cite{CKMP}, at level $k=1$ the decomposition is not finite, but the eigenspaces of the action of $\varpi_{(0)}$ admit finite decomposition. In this section we  discuss finite decomposition of $\varpi_{(0)}$-eigenspaces for conformal levels different from $1$. In the following we write weights of the simple ideals $\g^0_j$ of $\g^0$ as linear combinations of the fundamental weights $\omega_i$ of $\g^0_j$ (to avoid cumbersome notation, we will not make explicit the depencence on $j$ unless it is necessary).\par
In Theorem \ref{51} we list the cases where condition \eqref{finioths} holds, hence Theorem \ref{generalhs} applies. Case (5) already appears in \cite{AP2}; case (4) can be derived  at once from \cite{AP2} using results from  \cite{FF}; the methods of \cite{AP2} actually cover also case (4) with $n=3$,  where \eqref{finioths} does not hold: see Theorem \ref{nequal5} below, where 
more instances of finite decomposition will be given. 
\begin{theorem}\label{51} Assume we are in one of the following cases.
\begin{enumerate}
\item Type $A_{h-1}\times A_{n-h}\times \C\varpi$ in $A_{n}$ with $n>5$, $h>2$ and $n-h>1$, conformal level $k=-1$.
\item Type $A_{n-1}\times \C\varpi$ in $A_{n}$ with $n>3$, conformal level $k=-\frac{n+1}{2}$.
\item Type $A_{n-1}\times \C\varpi$ in $D_{n}$ with $n>4$, conformal level $k=-2$.
\item Type $A_{n-1}\times \C\varpi$ in $C_{n}$ with $n> 3$, conformal level $k=-\half$.
\item Type $D_{5}\times \C\varpi$ in $E_{6}$, conformal level $k=-3$.
\item Type $E_{6}\times \C\varpi$ in $E_{7}$, conformal level $k=-4$.
\end{enumerate}
Then $\widetilde{V}_k(\g^0)\cong V_{\mathbf{k}}(\g^0)$, the $\varpi_{(0)}$-eigenspaces in $V_k(\g)$ are irreducible $\widehat{\g^0}$-modules and the decomposition of $V_k(\g)$ as a $\widehat{\g^0}$-module  is given by formula \eqref{efd}.
\end{theorem}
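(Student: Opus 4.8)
The plan is to reduce the entire statement to the numerical criterion of Remark \ref{conditionsfinite} and then invoke Theorem \ref{generalhs}. Since $k\neq 0$ in each of the six cases, and since $V(-\a_p)=V(\theta)^*$ forces the trivial module to occur in $V(\theta)\otimes V(-\a_p)$, hypothesis \eqref{uvj-hs} holds automatically, and the only thing left is the absence of $\widehat{\g^0}$-primitive vectors of weight $\nu_r$ in $V_k(\g)^{(0)}$, i.e.\ condition \eqref{uvj-2hs}. By Remark \ref{conditionsfinite} this follows once we show that the conformal weight
\[
\sum_{u=0}^t\frac{(\nu_r^u,\nu_r^u+2\rho_0^u)_0}{2(k_u+h_u^\vee)}
\]
of each nontrivial constituent $V(\nu_r)$ of $V(\theta)\otimes V(\theta)^*$ is not a nonnegative integer. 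Once this is established, Theorem \ref{generalhs} simultaneously yields $\widetilde V_k(\g^0)\cong V_{\mathbf k}(\g^0)$, the irreducibility of the $\varpi_{(0)}$-eigenspaces, and the decomposition \eqref{efd}.

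Two observations reduce the bookkeeping. First, by \eqref{mu12} the factors $V(\theta)$ and $V(-\a_p)$ carry opposite central characters $\zeta$ and $-\zeta$, so every weight of $V(\theta)\otimes V(-\a_p)$ is trivial on the center $\g^0_0$; hence $\nu_r^0=0$, the $u=0$ summand drops out, and what remains is the conformal weight of $V(\nu_r)$ as a module over the semisimple part $[\g^0,\g^0]$. Second, the level induced on the simple ideal $\g^0_j$ is $k_j=d_jk$, as read off from \eqref{g} in the proof of Theorem \ref{main}; here $d_j=1$ in all cases except case (4), where $\a_p$ is long, $[\g^0,\g^0]=sl(n)$ is built from short roots, and $d_1=2$, so that the relevant level is $2k=-1$.

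With this setup each case becomes an explicit branching computation. First I would identify $V(\theta)$ as the natural geometric module: the bifundamental $V(\omega_1)\boxtimes V(\omega_{n-h})$ in (1), the (anti)fundamental in (2), $\Lambda^2$ of the standard module in (3), $S^2$ of the standard module in (4), a $16$-dimensional half-spin $D_5$-module in (5), and the minuscule $27$ of $E_6$ in (6). Decomposing $V(\theta)\otimes V(\theta)^*$ then produces the constituents $\nu_r$, whose conformal weights are elementary rational expressions. For the exceptional cases this is a finite check: in (6) the nontrivial constituents are the adjoint and $V(\omega_1+\omega_6)$, with conformal weights $\tfrac{12}{8}=\tfrac32$ and $\tfrac{36}{16}=\tfrac94$ at level $-4$; in (5) they are $V(\omega_1)$, $V(\omega_3)$ and $V(2\omega_5)$, with conformal weights $\tfrac{9}{10}$, $\tfrac{21}{10}$ and $\tfrac{25}{10}=\tfrac52$ at level $-3$. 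In the infinite families one checks uniform formulas; for instance in (1), with $k_1=k_2=-1$, the three nontrivial constituents yield the separate summands $\tfrac{h}{h-1}=1+\tfrac1{h-1}$ and $\tfrac{n+1-h}{n-h}=1+\tfrac1{n-h}$, and the ``double adjoint'' constituent of weight
\[
\frac{h}{h-1}+\frac{n+1-h}{n-h}=2+\frac{1}{h-1}+\frac{1}{n-h}.
\]

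The main obstacle is precisely this integrality bookkeeping, since the parameter restrictions in the statement are exactly the thresholds beyond which every constituent becomes non-integral. In (1) the three values above are non-integers exactly when $h>2$, $n-h>1$, and (because $\tfrac1{h-1}+\tfrac1{n-h}=1$ forces $h=3,\,n=5$) $n>5$. In (2) the only nontrivial constituent is the adjoint, of weight $\tfrac{2n}{n-1}=2+\tfrac2{n-1}$, an integer precisely for $n\in\{2,3\}$, whence $n>3$. In (3) the adjoint has weight $\tfrac{n}{n-2}=1+\tfrac{2}{n-2}$, an integer for $n\in\{3,4\}$, forcing $n>4$. In (4) the adjoint weight $\tfrac{n}{n-1}$ is harmless, but the top constituent $V(2\omega_1+2\omega_{n-1})$ has conformal weight $4$ at $n=3$, forcing $n>3$. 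The delicate point, and the only real work, is to confirm that for the remaining parameter values the larger constituents (such as $V(\omega_2+\omega_{n-2})$ in (3) and $V(2\omega_1+2\omega_{n-1})$ in (4)) never slip back into $\ganz_+$; once this is verified, Remark \ref{conditionsfinite} and Theorem \ref{generalhs} close the argument.
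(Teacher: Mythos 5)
Your strategy is exactly the paper's: reduce everything to condition \eqref{finioths} via Remark \ref{conditionsfinite}, then let Theorem \ref{generalhs} deliver the isomorphism $\widetilde V_k(\g^0)\cong V_{\mathbf k}(\g^0)$, the irreducibility of the eigenspaces, and \eqref{efd}. Your structural observations (the central characters of $V(\theta)$ and $V(-\a_p)$ cancel, so $\nu_r^0=0$; the induced levels are $k_j=d_jk$, with $d_1=2$ only in case (4)) are correct, and your constituents and conformal weights in cases (1), (2), (3), (6) agree with the paper's. However, the proposal has genuine gaps.

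First, you explicitly defer the decisive step --- ``confirm that for the remaining parameter values the larger constituents never slip back into $\ganz_+$'' --- but that verification \emph{is} the proof; the paper carries it out case by case, and nothing else is left once the setup is in place. Second, in case (5) you decomposed the wrong tensor product: since $V(-\a_1)\cong V(\theta)^*$ and the two half-spin modules of $D_5$ are dual to each other, the relevant product is $V(\omega_4)\otimes V(\omega_5)=\C\oplus V(\omega_2)\oplus V(\omega_4+\omega_5)$, with conformal weights $\tfrac85$ and $\tfrac{12}{5}$ at level $-3$; your list $V(\omega_1)\oplus V(\omega_3)\oplus V(2\omega_5)$ is the decomposition of $V(\omega_4)\otimes V(\omega_4)$ and does not even contain the trivial module, contradicting your own opening remark. (This happens to be harmless, as all values are non-integral either way, but it is an error.) Third, and most seriously, the deferred check actually \emph{fails} in case (4): at level $k_1=2k=-1$ the constituent $\nu=2\omega_1+2\omega_{n-1}=2\theta$ has $(\nu,\nu+2\rho_0)_0=4\cdot 2+4(n-1)=4n+4$, hence conformal weight $\frac{4n+4}{2(n-1)}=2+\frac{4}{n-1}$ --- consistent with the value $4$ you found at $n=3$ --- and this equals $3\in\ganz_+$ at $n=5$. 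So \eqref{finioths} does not hold for case (4) with $n=5$, and the route you propose cannot close that case as stated; one needs a different argument there, e.g.\ the realization of $V_{-1/2}(C_n)$ as the even part of the Weyl vertex algebra used in the paper's Theorem \ref{nequal5}(2), which proves \eqref{eigenweightt} for every $n$. (Be aware that the paper's own write-up prints $2+\frac{2}{n-1}$ for this constituent, which is incompatible with the value $4$ at $n=3$; your arithmetic is the correct one, which is precisely why the deferral matters.)
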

\begin{proof}As observed in Remark \ref{conditionsfinite}, it is enough to check in each case that condition \eqref{finioths} holds:
\begin{enumerate}
\item In this case $V(\theta)=V_{A_{h-1}}(\omega_1)\otimes V_{A_{n-h}}(\omega_{n-h})\otimes V_{\mathbb C \varpi}(\zeta)$ and $V(-\a_h)=V_{A_{h-1}}(\omega_{h-1})\otimes V_{A_{n-h}}(\omega_{1})\otimes V_{\mathbb C \varpi}(-\zeta)$, thus $V(\theta)\otimes V(-\a_h)$ decomposes as
\begin{align*}
&V_{A_{h-1}}(0)\otimes V_{A_{n-h}}(0)\otimes V_{\mathbb C \varpi}(0)\\&\oplus V_{A_{h-1}}(\omega_1+\omega_{h-1})\otimes V_{A_{n-h}}(0)\otimes V_{\mathbb C \varpi}(0)\\&\oplus V_{A_{h-1}}(0)\otimes V_{A_{n-h}}(\omega_1+\omega_{n-h})\otimes V_{\mathbb C \varpi}(0)\\
&\oplus  V_{A_{h-1}}(\omega_1+\omega_{h-1})\otimes V_{A_{n-h}}(\omega_1+\omega_{n-h})\otimes V_{\mathbb C \varpi}(0),
\end{align*}
so the corresponding formula in \eqref{finioths} yields for the nontrivial summands above
$1+\frac{1}{h-1}$, $1+\frac{1}{n-h}$, $2+\frac{1}{h-1}+\frac{1}{n-h}$ respectively. 
\item  We have $V(\theta)=V_{A_{n-1}}(\omega_{n-1})\otimes V_{\mathbb C \varpi}(\zeta)$ and $V(-\a_1)=V_{A_{n-1}}(\omega_{1})\otimes V_{\mathbb C \varpi}(-\zeta)$, thus $V(\theta)\otimes V(-\a_1)$ decomposes as
\begin{align*}
V_{A_{n-1}}(0)\oplus V(0)\otimes V_{A_{n-1}}(\omega_1+\omega_{n-1})\otimes V_{\mathbb C \varpi}(0),
\end{align*}
so the corresponding formula in \eqref{finioths} yields for the nontrivial summand above
$2+\frac{2}{n-1}$. 
\item  We have $V(\theta)=V_{A_{n-1}}(\omega_{2})\otimes V_{\mathbb C \varpi}(\zeta)$ and $V(-\a_n)=V_{A_{n-1}}(\omega_{n-2})\otimes V_{\mathbb C \varpi}(-\zeta)$, thus $V(\theta)\otimes V(-\a_n)$ decomposes as
\begin{align*}
&V_{A_{n-1}}(0)\otimes V_{\mathbb C \varpi}(0)\\
&\oplus V_{A_{n-1}}(\omega_1+\omega_{n-1})\otimes V_{\mathbb C \varpi}(0)\\
&\oplus V_{A_{n-1}}(\omega_2+\omega_{n-2})\otimes V_{\mathbb C \varpi}(0),
\end{align*}
so the corresponding formula in \eqref{finioths} yields for the nontrivial summands above
$1+\frac{2}{n-2}$, $2+\frac{2}{n-2}$, respectively.
\item  We have $V(\theta)= V_{A_{n-1}}(2\omega_{1})\otimes V_{\mathbb C \varpi}(\zeta)$ and $V(-\a_n)= V_{A_{n-1}}(2\omega_{n-1})$, thus $V(\theta)\otimes V(-\a_n)\otimes V_{\mathbb C \varpi}(-\zeta)$ decomposes as
\begin{align*}
&V_{A_{n-1}}(0)\otimes V_{\mathbb C \varpi}(0)\\
&V_{A_{n-1}}(\omega_1+\omega_{n-1})\otimes V_{\mathbb C \varpi}(0)\\
&V_{A_{n-1}}(2\omega_1+2\omega_{n-1})\otimes V_{\mathbb C \varpi}(0),
\end{align*}
so the corresponding formula in \eqref{finioths} yields for the nontrivial summands above
$1+\frac{1}{n-1}$, $2+\frac{2}{n-1}$, respectively.
\item  We have  $V(\theta)=V_{D_{5}}(\omega_{4})\otimes V_{\mathbb C \varpi}(\zeta)$, $V(-\a_1)=V_{D_{5}}(\omega_{5})\otimes V_{\mathbb C \varpi}(-\zeta)$, thus $V(\theta)\otimes V(-\a_1)$ decomposes as
\begin{align*}
&V_{D_{5}}(0)\otimes V_{\mathbb C \varpi}(0)\\
&V_{D_{5}}(\omega_2)\otimes V_{\mathbb C \varpi}(0)\\
&V_{D_{5}}(\omega_4+\omega_{5})\otimes V_{\mathbb C \varpi}(0),
\end{align*}
so the corresponding formula in \eqref{finioths} yields for the nontrivial summands above
$\frac{8}{5}$, $\frac{12}{5}$, respectively.
\item  We have  $V(\theta)=V_{E_{6}}(\omega_{1})\otimes V_{\mathbb C \varpi}(\zeta)$, $V(-\a_7)=V_{E_{6}}(\omega_{6})\otimes V_{\mathbb C \varpi}(-\zeta)$, thus $V(\theta)\otimes V(-\a_7)$ decomposes as
\begin{align*}
&V_{E_{6}}(0)\otimes V_{\mathbb C \varpi}(0)\\
&\oplus V_{E_{6}}(\omega_2)\otimes V_{\mathbb C \varpi}(0)\\
&\oplus V_{E_{6}}(\omega_1+\omega_{6})\otimes V_{\mathbb C \varpi}(0),
\end{align*}
so the corresponding formula in \eqref{finioths} yields for the nontrivial summands above
$\frac{3}{2}$, $\frac{9}{4}$, respectively.
\end{enumerate}
\end{proof}

The next result discusses two special cases where  we still have finite decomposition of the eigenspaces of $\varpi_{(0)}$, even though the criterion in \eqref{finioths} does not apply.
\begin{theorem}\label{nequal5}
Consider the embeddings
\begin{enumerate}
\item  $A_{2}\times A_{2}\times \C\varpi\subset A_5$ at conformal level  $k=-1$;
\item $A_{2}\times  \C\varpi$ in $C_{3}$ with conformal level $k=-1/2$.
\end{enumerate}
Then $\widetilde V_k(\g^0)\cong V_\mathbf{k}(\g^0)$ and as a $V_\mathbf{k}(\g^0)$-module,
\begin{equation}
\label{eigenweightt}
V_k(\g)^{(q)}=\begin{cases}L_{\g^0}(q\theta)&\text{if $q\ge 0$,}\\
L_{\g^0}(q\a_p)&\text{if $q\le 0$.}
\end{cases}
\end{equation}
\end{theorem}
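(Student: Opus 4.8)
The plan is to subsume both embeddings under Theorem~\ref{generalhs}. In each case $\g^0$ is a maximal equal rank subalgebra with one–dimensional center $\C\varpi$, obtained by deleting the node $\a_p$ (the middle node $\a_3$ of $A_5$ in case (1), the long end node $\a_3$ of $C_3$ in case (2)), so that $\p=V(\theta)\oplus V(-\a_p)$ and the entire conclusion \eqref{eigenweightt} follows once the two hypotheses \eqref{uvj-hs} and \eqref{uvj-2hs} of Theorem~\ref{generalhs} are checked. The decomposition \eqref{uvj-hs} is computed exactly as in cases (1) and (4) of Theorem~\ref{51}, specialized to $(n,h)=(5,3)$ and to $n=3$: for (1) one obtains the four summands $\C$, $\mathrm{adj}\otimes V(0)$, $V(0)\otimes\mathrm{adj}$, $\mathrm{adj}\otimes\mathrm{adj}$ of the two $A_2$ factors, and for (2) the three summands $\C$, $V_{A_2}(\omega_1+\omega_2)$, $V_{A_2}(2\omega_1+2\omega_2)$.

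The point where these cases depart from Theorem~\ref{51} is that the top summand $\nu$ (namely $\mathrm{adj}\otimes\mathrm{adj}$ for (1), and $V_{A_2}(2\omega_1+2\omega_2)$ for (2), each $A_2$ factor sitting at level $-1$) is \emph{resonant}: its $\widehat{\g^0}$–Sugawara conformal weight $\D_\nu$ is a positive integer ($\D_\nu=3$ in case (1), and again a positive integer in case (2)), so the sufficient condition \eqref{finioths} of Remark~\ref{conditionsfinite} fails for $\nu$ and cannot be invoked. For every other nontrivial summand \eqref{finioths} does hold (the conformal weights are $\tfrac32$), so \eqref{uvj-2hs} is automatic there. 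Thus the whole theorem reduces to one statement: \emph{$V_k(\g)^{(0)}$ contains no $\widehat{\g^0}$–primitive vector of weight $\nu$.} Granting this, Theorem~\ref{generalhs} delivers $\widetilde V_k(\g^0)\cong V_{\mathbf k}(\g^0)$ together with \eqref{eigenweightt} verbatim, the nonvanishing of the singular vectors $({X_\theta}_{(-1)})^q\vac$ and $({X_{-\a_p}}_{(-1)})^q\vac$ being obtained, as in the proof of Theorem~\ref{generalhs}, from $k\ne 0$.

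To dispose of the resonant summand I would exploit the explicit free field realizations available at precisely these levels, which render the $\widehat{\g^0}$–module structure transparent. For (2) this is the realization of $V_{-1/2}(sp(6))$ inside the rank–$3$ Weyl (symplectic boson) vertex algebra $M$ used in \cite{AP2}: there the $\widehat{\g^0}=\widehat{gl(3)}$ currents are the normal–ordered bilinears $:a^i a^{\ast}_j:$, the charge $\varpi_{(0)}$ is the $gl(1)$–current $\sum_i :a^i a^{\ast}_i:$, and the $\varpi_{(0)}$–eigenspaces of $M$ are manifestly irreducible $\widehat{gl(3)}$–modules; isolating the charge–$0$ part gives $V_k(\g)^{(0)}=\widetilde V_k(\g^0)$ directly, in particular excludes any primitive vector of weight $\nu$, and \eqref{eigenweightt} comes out on the nose (this is the route of \cite{AP2}, which combined with \cite{FF} also settles case (4) of Theorem~\ref{51} for all $n$). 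For (1) I would run the parallel argument through a free field (Kac–Wakimoto type \cite{Kw}) realization of $V_{-1}(sl(6))$, tracking the induced $\widehat{\g^0}$–structure and again extracting the charge–$0$ eigenspace. As an alternative avoiding realizations, since $\D_\nu$ is small one may argue by a finite computation: list the $\g^0$–highest weight vectors of weight $\nu$ at conformal weight $\D_\nu$ in $V_k(\g)^{(0)}$ and show, by applying the lowering operators $(X_{-\theta})_{(1)}$, $(X_{\a_p})_{(1)}$ together with the raising operators of $\widehat{\g^0}$ in the style of Lemmas~\ref{actionxeta} and~\ref{action2e1}, that every such vector is a $\widehat{\g^0}$–descendant of $\vac$, hence not primitive.

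The hard part will be exactly this resonant summand. Because $\D_\nu\in\ganz_+$, a $\widehat{\g^0}$–singular vector of weight $\nu$ is permitted on pure conformal–weight grounds, so its absence is a genuinely level–specific fact rather than a numerical coincidence; this is precisely why (1) and (2) had to be separated off from Theorem~\ref{51}. Whichever route one takes — the explicit $\beta\gamma$/Weyl realization or the direct conformal–weight–$\D_\nu$ computation — it is here that the content of the theorem lies, the remainder being a direct application of Theorem~\ref{generalhs}.
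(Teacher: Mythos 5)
Your proposal is correct and rests on essentially the same key input as the paper's own proof: both cases are settled by the explicit free field realizations at these special levels --- the Kac--Wakimoto realization of $V_{(-1,-1)}(gl(6))$, split into two rank-three Weyl factors, for case (1), and the realization of $V_{-1/2}(C_3)$ as the even part of the rank-three Weyl vertex algebra via \cite[Theorem 3.2]{AP2} and \cite[Table XII]{FF} for case (2). The only organizational difference is that the paper never routes through Theorem \ref{generalhs}: it reads off the irreducibility and highest weights of \emph{every} $\varpi_{(0)}$-eigenspace directly from the realization, whereas you use the realization only to rule out the resonant primitive vector (conformal weight $3$) in the charge-zero part and then let Theorem \ref{generalhs} supply the rest --- a valid, if slightly redundant, repackaging of the same argument.
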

\begin{proof} Case (1).
Recall that $V_{(-1,-1)}(gl(3))=V_{-1}(A_2)\otimes V_{-1}(\C I_3)$ and let $\zeta_{I_3}\in(\C I_3)^*$ be defined by $\zeta_{I_3}(I_3)=1$ ($I_n$ being the identity matrix in $gl(n)$). If $x\in gl(3)$ we set $\dot x=(x,0)\in gl(3)\times gl(3)$ and $\ddot x=(0,x)\in gl(3)\times gl(3)$. Likewise we write $\l\in (gl(3)\times gl(3))^*$ as $\l=\dot\l+\ddot\l$ with $\dot\l(\ddot x)=\ddot\l(\dot x)=0$ for all $x\in gl(3)$. Set also $\mathfrak c=span(\dot I_3,\ddot I_3)$ be the center of $gl(3)\times gl(3)$.

We use the Kac-Wakimoto free field realization of $V_{(-1,-1)}(gl(n+1))$ (see \cite{Kw}). As shown in \cite{AP2},  $V_{(-1,-1)}(gl(n+1))$ can be realized for $n>1$ as the subspace $F^0$ of zero total charge of the universal vertex algebra $F$ generated by fields $a^\pm_i$ ($i=1,\ldots,n+1$) with $\l$-products
$$
[(a_i^+)_\l (a_j^-)]=\d_{ij},\quad[(a_i^-)_\l (a_j^+)]=-\d_{ij},\quad[(a_i^+)_\l (a_j^+)]=[(a_i^-)_\l (a_j^-)]=0.
$$

Restrict now to the $n=5$ case.
Let $F_1$ be the vertex subalgebra generated by $a^\pm_i$ with $i\le 3$ and $F_{2}$ the subalgebra generated by $a^\pm_i$ with $i> 3$. Then, by the explicit realization of $F$ given in \cite{AP2}, we have that, as a $V_{(-1,-1)}(gl(3))\otimes V_{(-1,-1)}(gl(3))$-module, 
$$
F=F_1\otimes F_{2},
$$
hence
$$F^0=\sum_{q\in\ganz} F^q_1\otimes F^{-q}_{2},
$$
where we define $F_j^q$ to be the subspace of total charge $q$. 
\par
We can now apply Theorem 3.2 of \cite{AP2} to $F_j^q$ and find that
$$
F_j^q=\begin{cases}L_{A_2}(q\omega_1)\otimes L_{\mathbb C I_3}(q\zeta_{I_3})&\text{if $q\ge 0$,}\\L_{A_2}(-q\omega_{2})\otimes L_{\mathbb C I_3}(q\zeta_{I_3})&\text{if $q< 0$.}
\end{cases}
$$
as a $V_{(-1,-1)}(gl(3))$-module.

Thus,
as a $V_{(-1,-1)}(A_{2}\times A_{2})\otimes V_{(-1,-1)}(\mathfrak c)$-module,
$$
F^q_1\otimes F^{-q}_{2}=\begin{cases}L_{A_2\times A_2}(q(\dot \omega_1+\ddot\omega_{2}))\otimes L_{\mathfrak  c}(q(\dot\zeta_{\dot I_3}-\ddot\zeta_{\ddot I_{3}}))&\text{if $q\ge 0$,}\\L_{A_2\times A_2}(-q(\dot\omega_2+\ddot\omega_{1}))\otimes L_{\mathfrak  c}(q(\dot\zeta_{\dot I_3}-\ddot\zeta_{\ddot I_{3}}))&\text{if $q< 0$.}
\end{cases}
$$

To recover the action of $V_{-1}(A_{2})\otimes V_{-1}(A_{2})\otimes V_{-1}(\C\varpi)$ on $F^q_1\otimes F^{-q}_{2}$  we observe that both $\{\dot I_3,\ddot I_{3}\}$ and $\{I_{6},\varpi\}$ are orthogonal bases of the center of $gl(3)\times gl(3)$.  Since $I_{6}=\dot I_3+\ddot I_{3}$ while $\varpi=\tfrac{1}{2}\dot I_3-\tfrac{1}{2}\ddot I_{3}$, it follows that $\dot\zeta_{\dot I_3}=\tfrac{1}{2}\zeta+\zeta_{I_{6}}$ while $\ddot\zeta_{\ddot I_{3}}=-\tfrac{1}{2}\zeta+\zeta_{I_{6}}$. Thus, as a $V_{(-1,-1)}(\mathfrak c)=V_{-1}(\C \varpi)\otimes V_{-1}(\C I_{6})$-module,
$$
L_{\mathfrak  c}(q(\dot\zeta_{\dot I_3}-\ddot\zeta_{\ddot I_{3}}))=L_{\C \varpi}(q\zeta)\otimes L_{\C I_6}(0).
$$

Let $(F^0)^+=\{v\in F^0\mid (I_{6})_{(r)}v=0\text{ for all } r>0\}$. By Theorem 3.2 of \cite{AP2}, $(F^0)^+=V_{-1}(A_5)$, so we obtain that, if $q\ge 0$, then
\begin{align*}
V_{-1}(A_5)^{(q)}&=\left(L_{A_2\times A_2}(q(\dot \omega_1+\ddot\omega_{2}))\otimes L(q\zeta)\otimes L(0)\right)^+\\
&=L_{A_2\times A_2}(q(\dot \omega_1+\ddot\omega_{2}))\otimes L(q\zeta),
\end{align*}
and, if $q\le 0$,
\begin{align*}
V_{-1}(A_5)^{(q)}&=\left(L_{A_2\times A_2}(-q(\dot \omega_2+\ddot\omega_{1}))\otimes L(q\zeta)\otimes L(0)\right)^+\\
&=L_{A_2\times A_2}(-q(\dot \omega_2+\ddot\omega_{1}))\otimes L(q\zeta).
\end{align*}
This in particular shows that $V_{-1}(A_5)^{(0)}$ is  simple as a $\widehat{\g^0}$-module and, since it contains $\widetilde V_{-1}(\g^0)$ as a submodule, we have that $\widetilde V_{-1}(\g^0)$ is simple. 

To finish the proof it is enough to observe that
$$
\dot \omega_1+\ddot\omega_{2}+\zeta=\theta,\ -\dot \omega_2-\ddot\omega_{1}+\zeta=\a_p.
$$

Case (2). Again by  \cite[Theorem 3.2 ]{AP2}  we have 
\begin{equation}\label{eqc}
F^q=\begin{cases}L_{A_2}(q\omega_1)\otimes L_{\mathbb C I_3}(q\zeta_{I_3})&\text{if $q\ge 0$}\\L_{A_2}(-q\omega_{2})\otimes L_{\mathbb C I_3}(q\zeta_{I_3})&\text{if $q< 0$}
\end{cases}
\end{equation}
as a $V_{(-1,-1)}(gl(3))$-module.   
 But by \cite[Table XII]{FF}, the  even part of $F$ is the  simple vertex algebra $V_{-1/2}(C_n)$. This immediately gives the decomposition  
\begin{equation}\label{eqcc}V_{-1/2}(C_n) = \bigoplus_{q \in \mathbb Z} F^{2q}.\end{equation}
Since $2\zeta_{I_3}+2\omega_1=\zeta+2\omega_1=\theta$ and $2\zeta_{I_3}-2\omega_2=\zeta-2\omega_2 =\a_p$, combining \eqref{eqc} and \eqref{eqcc}, formula \eqref{eigenweightt} follows in this case too. \end{proof}

In the next result we collect a few cases where finite decomposition does not occur.

\begin{theorem} \label{infinidec} In the following cases the decomposition of the $q$-eigenspace for $\varpi_{(0)}$ in $V_k(\g)$ as $\widehat{\g^0}$-module is not finite for any $q\in\ganz$:
\begin{enumerate}
\item Type $A_{h-1}\times A_{n-h}\times \C\varpi$ in $A_{n}$ with $h\ge 2$ and $n-h\ge1$, conformal level  $k=-\tfrac{n+1}{2}$.
\item Type $A_{h-1}\times A_{n-h}\times \C\varpi$ in $A_{n}$ with either $h= 2$ or  $n-h=1$, conformal level  $k=-1$.
\item Type $A_{1}\times \C\varpi$ in $A_{2}$, conformal level  $k=-\frac{3}{2}$.
\item Type $A_1 \times {\mathbb C}  \varpi  $ in $B_2 =C_2$, conformal level $k =-1/2$.
\end{enumerate}
\end{theorem}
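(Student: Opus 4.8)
The plan is to prove infinite decomposition in each of the four cases by exhibiting an infinite family of $\widehat{\g^0}$-singular vectors in $V_k(\g)$, following the strategy already used in Proposition~\ref{43} and Corollary~\ref{46}: construct explicit vectors of increasing conformal weight, show they are $\widehat{\g^0}$-singular, and verify they are nonzero. The criterion in Proposition~\ref{cw} is only stated for semisimple $\g^0$, so in the reductive setting I would argue directly. The governing principle is that $V_k(\g)^{(q)}$ decomposes finitely as a $\widehat{\g^0}$-module if and only if, inside each fixed $\varpi_{(0)}$-eigenspace, only finitely many singular weights occur; producing singular vectors in infinitely many distinct $\widehat{\g^0}$-dominant weights within a \emph{single} $q$-eigenspace (equivalently, singular vectors whose $\widehat{\g^0}$-highest weight is not of the form $q\theta$ or $q\a_p$) forces infinitude.

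\textbf{Case-by-case strategy.} For cases (1) and (2), which live in type $A_n$, the natural tool is again the free field realization of $V_{-1}(sl(n+1))$ via the fields $a^\pm_i$ used in Theorem~\ref{nequal5}, together with the Kac--Wakimoto realization \cite{Kw} for the level $-\tfrac{n+1}{2}$ case. The idea is to split the oscillators according to the two simple factors $A_{h-1}$ and $A_{n-h}$ (as in the $F=F_1\otimes F_2$ decomposition of Theorem~\ref{nequal5}) and observe that when one of the factors is too small (either $h=2$ or $n-h=1$ in case (2), or at the level $-\tfrac{n+1}{2}$ in case (1)), the fusion argument of Theorem~\ref{generalhs} breaks down: condition \eqref{uvj-2hs} fails because some $\nu_r$ \emph{does} appear as the weight of a $\widehat{\g^0}$-primitive vector in $V_k(\g)^{(0)}$. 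Concretely I would identify one such primitive vector of conformal weight $2$ and then generate an infinite family from it by repeatedly applying a creation operator of charge $0$ (e.g. a product $a^+_i a^-_j$ lying in $\p$), checking singularity via the $\l$-bracket relations and nonvanishing via the free field normal-ordering. For cases (3) and (4), which are genuinely small rank, the cleanest route is the explicit realization: case (4), $A_1\subset B_2=C_2$ at $k=-\tfrac12$, is covered by the $C_n$ oscillator picture of Theorem~\ref{nequal5}(2) specialized to $n=2$, where one checks that \eqref{finioths} fails and then directly builds the singular tower; case (3), $A_1\subset A_2$ at $k=-\tfrac32$, should be attacked through Adamovi\'c's realization of $V_{-3/2}(sl(3))$ in the tensor product of the $N=4$ superconformal algebra with a lattice vertex algebra \cite{A-2014}, as flagged in the introduction, extracting infinitely many singular vectors from the lattice part.

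\textbf{The main obstacle} will be case (3): unlike the others it has no direct oscillator description, and the $N=4$/lattice realization requires translating the $\widehat{\g^0}$-singular vector condition into the superconformal--lattice language and then producing a clean infinite family, which is the least routine computation. A secondary subtlety, common to all four cases, is to confirm that the singular vectors I construct survive the passage from the universal vertex algebra $V^k(\g)$ to the simple quotient $V_k(\g)$, i.e. that they are not killed in the maximal submodule; for this I would rely on the explicit realizations, which \emph{a priori} land in the simple algebra, thereby sidestepping the delicate singular-vector-in-$V^k$ versus $V_k$ issue. Once an infinite, weight-distinct singular family is exhibited in a fixed $q$-eigenspace for each case, linear independence (distinct weights) yields non-finiteness of the decomposition, completing the proof.
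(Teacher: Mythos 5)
Your proposals for cases (3) and (4) essentially coincide with the paper's proof: case (3) is done via Adamovi\'c's $N=4$/lattice realization of $V_{-3/2}(sl(3))$, producing the singular vectors $Q^je^{t\d}\otimes e^{(t-2j)\phi}$ from the screening operator $Q$ (with nonvanishing from \cite{AM1}), and case (4) via the rank-two Weyl (oscillator) realization of $V_{-1/2}(B_2)$. For case (2) your tool is right (the level $-1$ free field realization), but your mechanism is not how the paper argues and is shaky as stated: failure of the sufficient condition \eqref{uvj-2hs} proves nothing by itself, and repeatedly applying a charge-zero ``creation operator'' $a^+_ia^-_j$ to a primitive vector does not obviously preserve $\widehat{\g^0}$-singularity. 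The paper instead quotes \cite[Remark 3.3]{Kw}: when $h=2$ the charge-$q$ subspace $F_1^q$ of the rank-two factor is already an \emph{infinite} direct sum $\sum_{j\ge0}L_{A_1}((2j+|q|)\omega_1)\otimes L_{\C I_2}(q\zeta_{I_2})$ of $V_{-1}(gl(2))$-modules (the Appell function phenomenon), and infinitude of each $\varpi_{(0)}$-eigenspace follows immediately by tensoring with $F_2^{-q}$.

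The genuine gap is case (1). You propose to handle level $k=-\tfrac{n+1}{2}$ ``together with the Kac--Wakimoto realization \cite{Kw}'', but no such realization exists at that level: \cite{Kw} gives the Weyl-algebra realization of $gl(n+1)$ only at level $-1$ (and of $C_n$ at level $-1/2$), so the explicit-realization strategy --- including your plan to sidestep the $V^k(\g)$-versus-$V_k(\g)$ issue by working inside a realization of the simple algebra --- is unavailable precisely here. The paper proves case (1) by a self-contained computation in the universal algebra, in the spirit of Lemma \ref{action2e1} and Corollary \ref{46}: it introduces the monomials $v_{i,j,r,s}=(X_{-\e_{h}+\e_{h+1}})^i_{(-1)}(X_{\e_{1}-\e_{n+1}})_{(-1)}^j(X_{\e_1-\e_h})^r_{(-1)}(X_{\e_{h+1}-\e_{n+1}})_{(-1)}^s\vac$, computes the action of the relevant lowering operators (Lemma \ref{actioneij}), shows that the space of $\widehat{\g^0}$-singular vectors in each span $S_{m,q}$ is one-dimensional, spanned by an explicit combination $v_{m,q}$ (Lemma \ref{55}), and establishes the recursions $(X_{\e_h-\e_{h+1}})_{(1)}v_{m,q}=c_{m,q}v_{m-1,q+1}$, $(X_{-\e_1+\e_{n+1}})_{(1)}v_{m,q}=d_{m,q}v_{m-1,q-1}$. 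The nonvanishing of $c_{m,q}$, $d_{m,q}$ at $k=-\tfrac{n+1}{2}$ then shows, by induction, that every $v_{m,q}$ has nonzero image in the simple quotient $V_k(\g)$ --- this is exactly the quotient-survival argument your proposal lacks a substitute for --- and the $v_{m,q}$, having distinct weights, give infinitely many independent singular vectors in each eigenspace. Without either an explicit realization at level $-\tfrac{n+1}{2}$ or this direct construction, your proof of case (1) does not go through.
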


The proof of the above result employs  very different techniques in each of the four cases. We will discuss these cases   in the following  subsections. 
\begin{rem}\label{missing}
The remaining open cases are the following (we set $D_3=A_3$).
\begin{enumerate}
\item Type $A_{2}\times \C\varpi$ in $A_{3}$, conformal level  $k=-2$.
\item Type $D_{n-1}\times \C\varpi$ in $D_{n}$, conformal level  $k=2-n$, $n\geq 4$.
\item Type $B_{n-1}\times \C\varpi$ in $B_{n}$, conformal level  $k=\frac{3}{2}-n$, $n\geq 3$.
\end{enumerate}
\end{rem}

\subsection{Proof of Theorem \ref{infinidec} (1)}
We can choose the root vectors in such a way that the following relations hold:
$$
[X_{\e_i-\e_j},X_{\e_r-\e_s}]=\delta_{j,r}X_{\e_i-\e_s}-\delta_{s,i}X_{\e_r-\e_j}.
$$
Moreover
$(X_{\a},X_{-\a})=1$ for all roots $\a$. With this choice of roots vectors set 
$$v_{i,j,r,s}=(X_{-\e_{h}+\e_{h+1}})^i_{(-1)}(X_{\e_{1}-\e_{n+1}})_{(-1)}^j(X_{\e_1-\e_h})^r_{(-1)}(X_{\e_{h+1}-\e_{n+1}})_{(-1)}^s\vac.
$$
Note that $\varpi_{(0)} v_{i,j,r,s}=(j-i)v_{i,j,r,s}$ and that if $\a$ is a positive root for $\g^0$ then $(X_\a)_{(0)} v_{i,j,r,s}=0$.
\begin{lemma}\label{actioneij}
\begin{equation}\label{ea1}(X_{-\e_1+\e_h})_{(1)}v_{i,j,r,s}=r(k-i-j-r+1)v_{i,j,r-1,s}-i j\, v_{i-1,j-1,r,s+1}.
\end{equation}
\begin{equation}(X_{-\e_{h+1}+\e_{n+1}})_{(1)}v_{i,j,r,s}=s(k-i-j-s+1)v_{i,j,r,s-1}-i j\, v_{i-1,j-1,r+1,s}.
\end{equation}
\begin{equation}\label{-e-ea}(X_{\e_h-\e_{h+1}})_{(1)}v_{i,j,r,s}=i(k-i-r-s+1)v_{i-1,j,r,s}-r s\, v_{i,j+1,r-1,s-1}.
\end{equation}
\begin{equation}\label{-ewhatever}(X_{-\e_1+\e_{n+1}})_{(1)}v_{i,j,r,s}=j(k-j-r-s+1)v_{i,j-1,r,s}-r s\, v_{i+1,j,r-1,s-1}.
\end{equation}
\begin{equation}\label{-ecenter}(\varpi)_{(t)}v_{i,j,r,s}=0,\ t>0.\end{equation}
\end{lemma}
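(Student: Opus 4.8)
The five identities are pure computations in the affine algebra $\ga$ acting on $V^k(\g)$ (so $K$ acts by $k$), and I would prove them using only the bracket relations $[X_{\e_i-\e_j},X_{\e_r-\e_s}]=\delta_{jr}X_{\e_i-\e_s}-\delta_{si}X_{\e_r-\e_j}$ and $(X_\a,X_{-\a})=1$ displayed before the lemma, together with the commutation rule $[x_{(m)},y_{(n)}]=[x,y]_{(m+n)}+m\delta_{m+n,0}(x,y)k$ and the fact that $x_{(m)}\vac=0$ for $m\ge 0$. The organizing observation is that each raising operator on the left-hand sides has root equal to the \emph{negative} of the root of exactly one building block: $\e_h-\e_1=-(\e_1-\e_h)$ is matched to $C=X_{\e_1-\e_h}$ in \eqref{ea1}, $\e_{n+1}-\e_{h+1}$ to $D=X_{\e_{h+1}-\e_{n+1}}$ in the identity for $(X_{-\e_{h+1}+\e_{n+1}})_{(1)}$, $\e_h-\e_{h+1}$ to $A=X_{-\e_h+\e_{h+1}}$ in \eqref{-e-ea}, and $\e_{n+1}-\e_1$ to $B=X_{\e_1-\e_{n+1}}$ in \eqref{-ewhatever}. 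One checks from the bracket relations that the matched block has zero Lie bracket in $\g$ with the other three, hence all their modes commute and, e.g., $v_{i,j,r,s}=(X_{\e_1-\e_h})_{(-1)}v_{i,j,r-1,s}$; this rewriting is what makes an induction possible.

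Focusing on \eqref{ea1}, I would induct on $r$, writing $(X_{\e_h-\e_1})_{(1)}v_{i,j,r,s}=[(X_{\e_h-\e_1})_{(1)},(X_{\e_1-\e_h})_{(-1)}]v_{i,j,r-1,s}+(X_{\e_1-\e_h})_{(-1)}(X_{\e_h-\e_1})_{(1)}v_{i,j,r-1,s}$. The commutator equals $(h_{\e_h-\e_1})_{(0)}+k$ (Cartan part plus the central term from $(X_{\e_h-\e_1},X_{\e_1-\e_h})=1$), and its Cartan part is read off by pairing $\e_h-\e_1$ against the weight of $v_{i,j,r-1,s}$, giving eigenvalue $-i-j-2(r-1)$; thus the first summand is $(k-i-j-2r+2)v_{i,j,r-1,s}$. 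The second summand is handled by the inductive hypothesis and reassembled using that $(X_{\e_1-\e_h})_{(-1)}$ commutes with the other blocks, producing $(r-1)(k-i-j-r+2)v_{i,j,r-1,s}-ij\,v_{i-1,j-1,r,s+1}$. A one-line simplification of $(k-i-j-2r+2)+(r-1)(k-i-j-r+2)$ collapses to $r(k-i-j-r+1)$, which is \eqref{ea1}; the off-diagonal term is simply carried along. The identities \eqref{-e-ea} and \eqref{-ewhatever}, and the one for $(X_{-\e_{h+1}+\e_{n+1}})_{(1)}$, are proved identically, inducting on $i$, $j$, and $s$ respectively.

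The genuine content, and the main obstacle, lies in the base case of each induction, where the off-diagonal term is produced. For \eqref{ea1} this is the claim $(X_{\e_h-\e_1})_{(1)}(X_{-\e_h+\e_{h+1}})^i_{(-1)}(X_{\e_1-\e_{n+1}})^j_{(-1)}(X_{\e_{h+1}-\e_{n+1}})^s_{(-1)}\vac=-ij\,v_{i-1,j-1,0,s+1}$. I would prove it by pushing the operator to the right: $[(X_{\e_h-\e_1})_{(1)},(X_{-\e_h+\e_{h+1}})_{(-1)}]=-(X_{-\e_1+\e_{h+1}})_{(0)}$ selects one of the $i$ factors, and then $[(X_{-\e_1+\e_{h+1}})_{(0)},(X_{\e_1-\e_{n+1}})_{(-1)}]=(X_{\e_{h+1}-\e_{n+1}})_{(-1)}$ selects one of the $j$ factors and creates a new $D$-block, while every other commutator along the way vanishes and all operators of nonnegative mode annihilate $\vac$. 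This is where the bracket relations must be applied with care: verifying that precisely these two commutators survive, with the correct signs, and that the intermediate $(X_{-\e_1+\e_{h+1}})_{(0)}$ commutes past the remaining blocks, uses the hypotheses $h\ge 2$ and $n-h\ge 1$ to kill the offending Kronecker deltas. The conservation law $(\e_h-\e_1)+(\e_{h+1}-\e_h)+(\e_1-\e_{n+1})=\e_{h+1}-\e_{n+1}$ is a useful check: it forces the only possible two-step output to be a $D$-block, and likewise pins down the outputs in the other three base cases.

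Finally, \eqref{-ecenter} is easiest: since $\varpi\in\h$ one has $[\varpi_{(t)},(X_\be)_{(-1)}]=\be(\varpi)(X_\be)_{(t-1)}$ for $t>0$ (the central term vanishes because $(\varpi,X_\be)=0$), and for our four blocks $\be(\varpi)\in\{-1,0,1\}$. Pushing $\varpi_{(t)}$ to the right, each resulting operator $(X_\be)_{(t-1)}$ has nonnegative mode and, again by the bracket relations, commutes with all the remaining blocks, hence annihilates $\vac$; therefore $\varpi_{(t)}v_{i,j,r,s}=0$.
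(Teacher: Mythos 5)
Your proposal is correct, and while its computational engine (pushing raising operators to the right with the affine commutation rule and the displayed $\g$-brackets) is the same as the paper's, the induction is organized genuinely differently. The paper proves \eqref{ea1} by induction on $i+j+r+s$, peeling off the leftmost factor of $v_{i,j,r,s}=(X_{-\e_h+\e_{h+1}})^i_{(-1)}(X_{\e_1-\e_{n+1}})^j_{(-1)}(X_{\e_1-\e_h})^r_{(-1)}(X_{\e_{h+1}-\e_{n+1}})^s_{(-1)}\vac$ in that fixed order; since the raising operator then meets the $A$- and $B$-factors first, zero-mode operators appear, and the paper must establish two auxiliary identities, $(X_{\e_h-\e_{n+1}})_{(0)}v_{0,j,r,s}=-rv_{0,j+1,r-1,s}$ and $(X_{-\e_1+\e_{h+1}})_{(0)}v_{i,j,r,s}=rv_{i+1,j,r-1,s}+jv_{i,j-1,r,s+1}$, each by its own induction. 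You instead state explicitly what the paper uses only implicitly, namely that all modes of the four building blocks commute in $\ga$ (the $\g$-brackets vanish and no two block roots are opposite, so no central terms arise); this reordering freedom lets you induct on the single exponent $r$ of the matched block, with the base case $r=0$ proved uniformly in $i,j,s$ by one annihilation chain. That is exactly why you never need the paper's second sublemma: its extra term $rv_{i+1,j,r-1,s}$ comes from the bracket $[X_{-\e_1+\e_{h+1}},X_{\e_1-\e_h}]=X_{-\e_h+\e_{h+1}}$, which cannot fire when $r=0$, whereas in your scheme the $C$-interaction is confined to the Cartan-plus-central commutator of the inductive step. I checked the key numbers (the weight pairing $-i-j-2(r-1)$, the collapse of $(k-i-j-2r+2)+(r-1)(k-i-j-r+2)$ to $r(k-i-j-r+1)$, and the base-case chain producing $-ij\,v_{i-1,j-1,r,s+1}$), so your version is a shorter and more modular proof of the same facts. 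One phrase is imprecise: ``every other commutator along the way vanishes'' is not literally true, e.g.\ $[(X_{\e_h-\e_1})_{(1)},(X_{\e_1-\e_{n+1}})_{(-1)}]=(X_{\e_h-\e_{n+1}})_{(0)}\ne 0$; what is true, as your next clause indicates, is that such a zero-mode operator commutes past all remaining factors and annihilates $\vac$, so its \emph{contribution} vanishes even though the commutator does not.
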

\begin{proof}
The induction is on $i+j+r+s$ with the base case $i=j=r=s=0$  being clear. We give the details only for \eqref{ea1}. If $i=j=r=0$ it is clear that $(X_{-\e_1+\e_h})_{(1)}v_{0,0,0,s}=0$; if $i=j=0, r>0,$ then
\begin{align*}
&(X_{-\e_1+\e_h})_{(1)}v_{0,0,r,s}\\&=(-(h_{-\e_1+\e_h})_{(0)}+k)v_{0,0,r-1,s}+(X_{\e_1-\e_h})_{(-1)}(X_{-\e_1+\e_h})_{(1)}v_{0,0,r-1,s}\\
&=(-2(r-1)+k)v_{0,0,r-1,s}+(r-1)(k-r+2)v_{0,0,r-1,s}\\
&=r(k-r+1)v_{0,0,r-1,s}.\end{align*}
If $i=0, j>0$
\begin{align}
&(X_{-\e_1+\e_h})_{(1)}v_{0,j,r,s}=\notag\\&(X_{\e_{h}-\e_{n+1}})_{(0)}v_{0,j-1,r,s}+(X_{\e_{1}-\e_{n+1}})_{(-1)}(X_{-\e_1+\e_h})_{(1)}v_{0,j-1,r,s}=\notag\\
&(X_{\e_{h}-\e_{n+1}})_{(0)}v_{0,j-1,r,s}+r(k-j-r+2)v_{0,j,r-1,s}.\label{ehn1}\end{align}

We claim that 
$$
(X_{\e_{h}-\e_{n+1}})_{(0)}v_{0,j,r,s}=-rv_{0,j+1,r-1,s}.
$$
Since $[(X_{\e_{h}-\e_{n+1}})_{(0)},(X_{\e_{1}-\e_{n+1}})_{(-1)}]=0$, we need only to prove that 
$$
(X_{\e_{h}-\e_{n+1}})_{(0)}v_{0,0,r,s}=-rv_{0,1,r-1,s}.
$$
We use induction on $r$, the base $r=0$ being clear.
If $r>0$, then
\begin{align*}
&(X_{\e_{h}-\e_{n+1}})_{(0)}v_{0,0,r,s}\\&=-(X_{\e_{1}-\e_{n+1}})_{(-1)}v_{0,0,r-1,s}+(X_{\e_{1}-\e_{h}})_{(-1)}(X_{\e_{h}-\e_{n+1}})_{(0)}v_{0,0,r-1,s}\\
&=-v_{0,1,r-1,s}-(r-1)v_{0,1,r-1,s}=-rv_{0,1,r-1,s}
\end{align*}
Substituting in \eqref{ehn1} we find
\begin{align*}
(X_{-\e_1+\e_h})_{(1)}v_{0,j,r,s}&=-rv_{0,j,r-1,s}+r(k-j-r+2)v_{0,j,r-1,s}\\
&=r(k-j-r+1)v_{0,j,r-1,s}.
\end{align*}
Finally, if $i>0$,
\begin{align}
&(X_{-\e_1+\e_h})_{(1)}v_{i,j,r,s}\notag\\&=-(X_{-\e_{1}+\e_{h+1}})_{(0)}v_{i-1,j,r,s}+(X_{-\e_{h}+\e_{h+1}})_{(-1)}(X_{-\e_{1}+\e_{h}})_{(1)}v_{i-1,j,r,s}\notag\\
&=-(X_{-\e_{1}+\e_{h+1}})_{(0)}v_{i-1,j,r,s}\notag\\
&+r(k-i-j-r+2)v_{i,j,r-1,s}-(i-1)jv_{i-1,j-1,r,s+1}\label{e1h}.
\end{align}
We claim that
$$
(X_{-\e_{1}+\e_{h+1}})_{(0)}v_{i,j,r,s}=rv_{i+1,j,r-1,s}+jv_{i,j-1,r,s+1}.
$$
Since $[(X_{-\e_{1}+\e_{h+1}})_{(0)},(X_{-\e_{h}+\e_{h+1}})_{(-1)}]=0$, we need only to prove that 
$
(X_{-\e_{1}+\e_{h+1}})_{(0)}v_{0,j,r,s}=rv_{1,j,r-1,s}+jv_{0,j-1,r,s+1}
$.
The induction is on $j+r$. The base $j=r=0$ is clear. If $j=0$ and $r>0$,
\begin{align*}
&(X_{-\e_{1}+\e_{h+1}})_{(0)}v_{0,0,r,s}\\&=(X_{-\e_{h}+\e_{h+1}})_{(-1)}v_{0,0,r-1,s}+(X_{\e_{1}-\e_{h}})_{(-1)}(X_{-\e_{1}+\e_{h+1}})_{(0)}v_{0,0,r-1,s}\\
&=v_{1,0,r-1,s}+(r-1)v_{1,0,r-1,s}=rv_{1,0,r-1,s}.
\end{align*}
If $j>0$,
\begin{align*}
&(X_{-\e_{1}+\e_{h+1}})_{(0)}v_{0,j,r,s}\\&=(X_{\e_{h+1}-\e_{n+1}})_{(-1)}v_{0,j-1,r,s}+(X_{\e_{1}-\e_{n+1}})_{(-1)}(X_{-\e_{1}+\e_{h+1}})_{(0)}v_{0,j-1,r,s}\\
&=v_{0,j-1,r,s+1}+rv_{1,j,r-1,s}+(j-1)v_{0,j-1,r,s+1}=rv_{1,j,r-1,s}+jv_{0,j-1,r,s+1}.
\end{align*}
Substituting in \eqref{e1h}, we find
\begin{align*}
(X_{-\e_1+\e_h})_{(1)}v_{i,j,r,s}&=-rv_{i,j,r-1,s}-jv_{i-1,j-1,r,s+1}\\
&+r(k-i-j-r+2)v_{i,j,r-1,s}-(i-1)jv_{i-1,j-1,r,s+1}\\
&=r(k-i-j-r+1)v_{i,j,r-1,s}-ijv_{i-1,j-1,r,s+1},
\end{align*}
which is \eqref{ea1}.
\end{proof}

\begin{lemma}\label{55} Consider $v_{i,j,r,s}$ as an element of $V^k(\g)$. Fix $m\ge 0$.  Set, for $i=0,\ldots,m$,
$$w_{i,q}=\begin{cases}v_{i,i+q,m-i,m-i}&\text{if $q\ge 0$,}\\ 
v_{i-q,i,m-i,m-i}&\text{if $q\le 0$.}
\end{cases}
$$
Set 
$$S_{m,q}=span(w_{i,q}\mid i=0,\ldots,m)$$
and
$$
v_{m,q}=\sum_{i=0}^m\frac{ \binom{k-m-|q|+1}{i}\binom{m}{i}}{\binom{i+|q|}{|q|}}w_{i,q}.
$$
Then
\begin{enumerate}
\item $\C v_{m,q}$ is the space of   $\widehat{\g^0}$-singular vectors in $S_{m,q}$.
\item  If $q\ge0$ and $m>0$ then
$
(X_{\e_h-\e_{h+1}})_{(1)}v_{m,q}=c_{m,q}v_{m-1,q+1}
$
with
\begin{equation}\label{cmq}
c_{m,q}=\frac{m}{q+1}(2 + k - m) (1 + k - 2 m - q).
\end{equation}
\item  If $q\le0$ and $m>0$ then
$
(X_{-\e_1+\e_{n+1}})_{(1)}v_{m,q}=d_{m,q}v_{m-1,q-1}
$
with
\begin{equation}\label{cmq2}
d_{m,q}=\frac{m}{-q+1}(2 + k - m) (1 + k - 2 m + q).
\end{equation}
\end{enumerate}
\end{lemma}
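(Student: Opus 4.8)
The plan is to reduce everything to two facts recorded just before the lemma. By the remark preceding the lemma, every $v_{i,j,r,s}$, and hence every element of each $S_{m,q}$, is annihilated by $(X_\a)_{(0)}$ for each positive root $\a$ of $\g^0$ and, by \eqref{-ecenter}, by $\varpi_{(t)}$ for $t>0$. A direct check also shows that all the $w_{i,q}$ ($i=0,\dots,m$) share one and the same $\h$-weight (for $q\ge0$ it equals $(m+q)\e_1-m\e_h+m\e_{h+1}-(m+q)\e_{n+1}$), so $v_{m,q}$ is a genuine weight vector. Consequently, within $S_{m,q}$ the only remaining conditions for $\widehat{\g^0}$-singularity are that $v_{m,q}$ be killed by the two affine lowering operators attached to the highest roots $\theta_1=\e_1-\e_h$ and $\theta_2=\e_{h+1}-\e_{n+1}$ of the simple ideals, i.e. by $(X_{-\e_1+\e_h})_{(1)}$ and $(X_{-\e_{h+1}+\e_{n+1}})_{(1)}$.

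For part (1) I treat $q\ge0$, the case $q\le0$ being symmetric. Applying \eqref{ea1} to $w_{i,q}=v_{i,i+q,m-i,m-i}$ produces the two families $v_{i,i+q,m-i-1,m-i}$ and $v_{i-1,i+q-1,m-i,m-i+1}$; after the shift $i\mapsto i+1$ in the second family these two families coincide, so the vanishing of $(X_{-\e_1+\e_h})_{(1)}v_{m,q}$ is equivalent to the single recursion
$$a_i\,(m-i)(k-i-q-m+1)=a_{i+1}\,(i+1)(i+1+q),\qquad a_i=\frac{\binom{k-m-q+1}{i}\binom{m}{i}}{\binom{i+q}{q}}.$$
A routine manipulation of binomial coefficients gives $\tfrac{a_{i+1}}{a_i}=\tfrac{(m-i)(k-m-q+1-i)}{(i+1)(i+q+1)}$, which is exactly the required ratio; the same computation carried through the second identity of Lemma \ref{actioneij} yields the condition coming from $(X_{-\e_{h+1}+\e_{n+1}})_{(1)}v_{m,q}$. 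Hence $v_{m,q}$ is singular, and it is nonzero because its $w_{0,q}$-coefficient is $1$ and the monomials $w_{i,q}$ are linearly independent in $V^k(\g)$ by PBW. Conversely, since the denominator $(i+1)(i+q+1)$ never vanishes, this recursion determines every coefficient of a singular vector from its $w_{0,q}$-coefficient, so the singular space in $S_{m,q}$ is exactly $\C v_{m,q}$, proving (1).

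For part (2) I apply \eqref{-e-ea} to $w_{i,q}$; reading off the four indices shows that both resulting vectors lie in $S_{m-1,q+1}$, namely $(X_{\e_h-\e_{h+1}})_{(1)}w_{i,q}=i(k+i-2m+1)w_{i-1,q+1}-(m-i)^2 w_{i,q+1}$, so $(X_{\e_h-\e_{h+1}})_{(1)}v_{m,q}\in S_{m-1,q+1}$. To see that this image is $\widehat{\g^0}$-singular I only need the two affine conditions, which follow from a commutator argument: since $[(X_{-\theta_j})_{(1)},(X_{\e_h-\e_{h+1}})_{(1)}]=([X_{-\theta_j},X_{\e_h-\e_{h+1}}])_{(2)}$ and the weights $-\theta_j+(\e_h-\e_{h+1})$ are not roots, these commutators vanish, whence $(X_{-\theta_j})_{(1)}(X_{\e_h-\e_{h+1}})_{(1)}v_{m,q}=(X_{\e_h-\e_{h+1}})_{(1)}(X_{-\theta_j})_{(1)}v_{m,q}=0$ by part (1). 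Thus the image is a scalar multiple of $v_{m-1,q+1}$. To evaluate the scalar I compare the coefficients of the extremal vector $w_{0,q+1}$, whose coefficient in $v_{m-1,q+1}$ equals $1$; this gives $c_{m,q}=a_1(k-2m+2)-m^2$ with $a_0=1$ and $a_1=m(k-m-q+1)/(q+1)$, and a short expansion checks that this equals the right-hand side of \eqref{cmq}. Part (3) is proved in the same way, using \eqref{-ewhatever} in place of \eqref{-e-ea} together with $(X_{-\theta_j})_{(1)}v_{m,q}=0$ for $q\le0$; the computation is obtained from part (2) by the substitution $q\mapsto -q$ and yields \eqref{cmq2}.

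The main obstacle is the bookkeeping: correctly matching the two families of monomials after the index shift in part (1) and confirming the binomial ratio, together with verifying that each relevant root sum $-\theta_j+(\e_h-\e_{h+1})$ fails to be a root so that the commutator argument in parts (2) and (3) goes through. Once these are in place, the remainder is the routine polynomial expansion identifying $c_{m,q}$ and $d_{m,q}$ with the stated formulas \eqref{cmq} and \eqref{cmq2}.
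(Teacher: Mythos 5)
Your proof is correct, and its skeleton is the same as the paper's: for part (1) you reduce $\widehat{\g^0}$-singularity inside $S_{m,q}$ to the two affine conditions $(X_{-\e_1+\e_h})_{(1)}v=(X_{-\e_{h+1}+\e_{n+1}})_{(1)}v=0$ (via the remark before Lemma \ref{actioneij} and \eqref{-ecenter}), derive from \eqref{ea1} exactly the recursion $a_i(m-i)(k-i-m-q+1)=a_{i+1}(i+1)(i+q+1)$, verify it by the binomial ratio, and get one-dimensionality because the denominators $(i+1)(i+q+1)$ never vanish; your coefficient computation $c_{m,q}=a_1(k-2m+2)-m^2$ with $a_1=m(k-m-q+1)/(q+1)$ is also literally the paper's. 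The one point where you genuinely depart from the paper is the verification, in parts (2) and (3), that $u=(X_{\e_h-\e_{h+1}})_{(1)}v_{m,q}$ (resp. $(X_{-\e_1+\e_{n+1}})_{(1)}v_{m,q}$) is $\widehat{\g^0}$-singular. The paper proves this by checking annihilation by the zero-modes of the two simple root vectors of $\g^0$ adjacent to $\a_p$, which forces it to establish the auxiliary claim $(X_{\e_{h-1}-\e_{h+1}})_{(1)}v_{i,j,r,s}=0$ for all $i,j,r,s$ by a separate double induction. You instead note first that $u\in S_{m-1,q\pm1}$ (immediate from \eqref{-e-ea}, resp. \eqref{-ewhatever}), so that all zero-mode conditions and all $\varpi_{(t)}$, $t>0$, conditions hold automatically for $u$, and then kill the two remaining affine conditions by the commutator identity $[(X_{-\theta_j})_{(1)},(X_{\a_p})_{(1)}]=([X_{-\theta_j},X_{\a_p}])_{(2)}=0$ (no central term since the mode indices sum to $2$, and $-\theta_j+\a_p$, resp.\ $-\theta_j-\theta$, is never a root of $\g$), combined with $(X_{-\theta_j})_{(1)}v_{m,q}=0$ from part (1). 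This is sound, and it buys a real economy: it eliminates the paper's inductive lemma entirely, at the modest price of the root-sum checks, which are trivial in the type $A$ root system. Conversely, the paper's route establishes singularity of $u$ without first invoking its membership in $S_{m-1,q\pm1}$, but given that this membership is needed anyway to apply part (1), your ordering of the steps is the more efficient one.
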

\begin{proof}
By \eqref{-ecenter}, in order to prove that $v_{m,q}$ is $\widehat{\g^0}$-singular, we need only to check that
$$
(X_{-\e_1+\e_h})_{(1)}v_{m,q}=(X_{-\e_{h+1}+\e_{n+1}})_{(1)}v_{m,q}=0.
$$
Set $a_i=\frac{ \binom{k-m-|q|+1}{i}\binom{m}{i}}{\binom{i+|q|}{|q|}}$ and $$w'_{i,q}=\begin{cases}v_{i,i+q,m-i-1,m-i}&\text{if $q\ge 0$,}\\v_{i-q,i,m-i-1,m-i}&\text{if $q\le 0$.}
\end{cases}$$
 By Lemma \ref{actioneij}, 
\begin{align*}
&(X_{-\e_1+\e_h})_{(1)}v_{m,q}
=\sum_{i=0}^{m-1} a_{i}(k-i-m-|q|+1)(m-i)w'_{i,q}-\\
&\sum_{i=1}^{m} a_{i}i(i+|q|)w'_{i-1,q})=\\
&\sum_{i=0}^{m-1} (a_{i}(k-i-m-|q|+1)(m-i)-a_{i+1}(i+1)(i+|q|+1))w'_{i,q}=0.
\end{align*}
The last equality follows from the following relation
$$\frac{(k-i-m-|q|+1)(m-i)}{(i+1)(i+|q|+1)}a_{i}=a_{i+1}.
$$
The same computation shows that $(X_{-\e_{h+1}+\e_{n+1}})_{(1)}v_{m,q}=0$.

Let $v=\sum_{s=0}^m c_{i}w_{i}$ be $\widehat{\g^0}$-singular. Then the computation above and the fact that the $v_{i,j,r,s}$ are linearly independent show that
$$
(k-i-m-|q|+1)(m-i)c_{i}=(i+1)(i+|q|+1)c_{i+1},
$$
so $v$ is determined by the choice of $c_{0}$, thus the set of $\widehat{\g^0}$-singular vectors in $S_{m,q}$ is one-dimensional.

We now prove (2). Set $u=(X_{\e_h-\e_{h+1}})_{(1)}v_{m,q}$. We first observe that 
$u$ is $\widehat{\g^0}$-singular. Indeed, this check reduces to showing that 
$$
(X_{\e_{h-1}-\e_h})_{(0)}u=(X_{\e_{h+1}-\e_{h+2}})_{(0)}u=0.$$
Let us check only that  $(X_{\e_{h-1}-\e_h})_{(0)}u=0$;  the other equality is obtained similarly. 
Now
\begin{align*}
(X_{\e_{h-1}-\e_h})_{(0)}(X_{\e_h-\e_{h+1}})_{(1)}v_{m,q}&=(X_{\e_{h-1}-\e_{h+1}})_{(1)}v_{m,q}.
\end{align*}
We claim that $(X_{\e_{h-1}-\e_{h+1}})_{(1)}v_{i,j,r,s}=0$ for all $i,j,r,s$. Indeed, since 
$$
[(X_{\e_{h-1}-\e_{h+1}})_{(1)},(X_{\e_1-\e_{n+1}})_{(-1)}]=[(X_{\e_{h-1}-\e_{h+1}})_{(1)},(X_{\e_1-\e_{h}})_{(-1)}]=0
$$
we need only to check that 
$$
(X_{\e_{h-1}-\e_{h+1}})_{(1)}v_{i,0,0,s}=0.
$$
We prove this by induction on $i+s$. If $i=0$ then 
\begin{align*}
(X_{\e_{h-1}-\e_{h+1}})_{(1)}v_{0,0,0,s}&=(X_{\e_{h-1}-\e_{n+1}})_{(0)}v_{0,0,0,s-1}=0
\end{align*}
If $i>0$, then
 \begin{align*}
(X_{\e_{h-1}-\e_{h+1}})_{(1)}v_{i,0,0,s}&=(X_{\e_{h-1}-\e_{h}})_{(0)}v_{i-1,0,0,s}=0.
\end{align*}
It follows from \eqref{-e-ea} that $(X_{\e_h-\e_{h+1}})_{(1)}v_{m,q}\in S_{m-1,q+1}$, so, since the space of $\widehat{\g^0}$-singular vectors in $S_{m-1,q+1}$ is one-dimensional, $(X_{\e_h-\e_{h+1}})_{(1)}v_{m,q}=c_{m,q} v_{m-1,q+1}$. To compute the coefficient $c_{m,q}$ we need only to compute the coefficient of $v_{0,q+1,m-1,m-1}$ in $(X_{\e_h-\e_{h+1}})_{(1)}v_{m,q}$. By \eqref{-e-ea} this coefficient is 
$$
c_{m,q}=-m^2 +\tfrac{m}{q+1}(k-2m+2) (k-m-q+1))=\tfrac{m}{q+1}(2 + k - m) (1 + k - 2 m - q).
$$

We now prove (3). Set $u=(X_{-\e_1+\e_{n+1}})_{(1)}v_{m,q}$. By the same argument used in the proof of (2) one can check that   
$u$ is $\widehat{\g^0}$-singular. It follows from \eqref{-ewhatever} that $(X_{-\e_1+\e_{n+1}})_{(1)}v_{m,q}\in S_{m-1,q-1}$, so, since the space of $\widehat{\g^0}$-singular vectors in $S_{m-1,q-1}$ is one-dimensional, $(X_{-\e_1+\e_{n+1}})_{(1)}v_{m,q}=d_{m,q} v_{m-1,q-1}$. To compute the coefficient $d_{m,q}$ we need only to compute the coefficient of $v_{-q+1,0,m-1,m-1}$ in $(X_{-\e_1+\e_{n+1}})_{(1)}v_{m,q}$. By \eqref{-ewhatever} this coefficient is 
$$
d_{m,q}=\tfrac{m}{|q|+1}(2 + k - m) (1 + k - 2 m - |q|).
$$
\end{proof}

\begin{cor}Consider the embedding $A_{h-1}\times A_{n-h}\times \C\varpi\subset A_n$ with $h\ge 2$ and $n-h\ge1$. If $k=-\tfrac{n+1}{2}$, then, for each $m\ge0$ and $q\in \ganz$, $v_{m,q}$ projects to a nonzero $\widehat{\g^0}$-singular vector in $V_k(\g)$. In particular, the decomposition of the $q$-eigenspace for $\varpi_{(0)}$ in $V_k(\g)$ as $\widehat{\g^0}$-module cannot be finite.
\end{cor}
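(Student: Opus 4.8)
The plan is to pass from the universal vertex algebra $V^k(\g)$, where the $v_{m,q}$ live and where all the identities of Lemma \ref{55} hold, to its simple quotient $V_k(\g)$ through the canonical surjection $\pi\colon V^k(\g)\to V_k(\g)$. Since $\pi$ is a homomorphism of $\widehat{\g^0}$-modules and $v_{m,q}$ is $\widehat{\g^0}$-singular by Lemma \ref{55}(1), the image $\pi(v_{m,q})$ is again $\widehat{\g^0}$-singular as soon as it is nonzero. Thus the whole corollary reduces to the nonvanishing statement $\pi(v_{m,q})\ne 0$ for every $m\ge 0$ and $q\in\ganz$, which I would establish by induction on $m$, carried out for all $q$ simultaneously.

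The first step is to check that the level $k=-\tfrac{n+1}{2}$ never kills the coefficients of Lemma \ref{55}(2),(3) for $m\ge 1$. Both $c_{m,q}$ in \eqref{cmq} and $d_{m,q}$ in \eqref{cmq2} have the common shape $\tfrac{m}{|q|+1}(2+k-m)(1+k-2m-|q|)$; substituting $k=-\tfrac{n+1}{2}$ gives $2+k-m=\tfrac{3-n-2m}{2}$ and $1+k-2m-|q|=\tfrac{1-n-4m-2|q|}{2}$. The hypotheses $h\ge 2$ and $n-h\ge 1$ force $n\ge 3$, so for $m\ge 1$ both linear factors are strictly negative and hence $c_{m,q}\ne 0$, $d_{m,q}\ne 0$ for all such $m$ and all $q$.

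For the base case $m=0$ I would observe that $v_{0,q}$ is a pure power of a single root vector: recognizing $X_{\e_1-\e_{n+1}}=X_\theta$ and $X_{-\e_h+\e_{h+1}}=X_{-\a_p}$ (with $\a_p=\e_h-\e_{h+1}$ the removed simple root), one has $v_{0,q}=(X_\theta)^q_{(-1)}\vac$ for $q\ge 0$ and $v_{0,q}=(X_{-\a_p})^{-q}_{(-1)}\vac$ for $q\le 0$. These are exactly the vectors proved nonzero in the proof of Theorem \ref{generalhs}; concretely, Lemma \ref{actionxeta} applied with $\eta=\theta$ or $\eta=-\a_p$ (both highest weights of $\p$, with $\Vert\eta\Vert^2=2$) gives $(X_{-\eta})_{(1)}(X_\eta)^N_{(-1)}\vac=N(k-N+1)(X_\eta)^{N-1}_{(-1)}\vac$, and the factor $N(k-N+1)$ cannot vanish for $N\ge 1$ because $k<0$ while $N-1\ge 0$; induction on $N$ then yields nonvanishing in $V_k(\g)$. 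For the inductive step $m>0$ I would assume $\pi(v_{m-1,q'})\ne 0$ for every $q'$ and push the relations of Lemma \ref{55} through $\pi$: when $q\ge 0$, $(X_{\e_h-\e_{h+1}})_{(1)}\pi(v_{m,q})=c_{m,q}\,\pi(v_{m-1,q+1})$, and when $q\le 0$, $(X_{-\e_1+\e_{n+1}})_{(1)}\pi(v_{m,q})=d_{m,q}\,\pi(v_{m-1,q-1})$. By the previous paragraph the scalar is nonzero and by the inductive hypothesis the target vector is nonzero, so the right-hand side is nonzero and therefore $\pi(v_{m,q})\ne 0$.

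Finally, since $\varpi_{(0)}v_{m,q}=q\,v_{m,q}$, for fixed $q$ the images $\pi(v_{m,q})$, $m\ge 0$, are nonzero $\widehat{\g^0}$-singular vectors lying in $V_k(\g)^{(q)}$ of pairwise distinct conformal weight $2m+|q|$, hence linearly independent; a finite direct sum of irreducible highest weight $\widehat{\g^0}$-modules would contain only finitely many independent singular vectors, so $V_k(\g)^{(q)}$ cannot decompose finitely. The one genuinely delicate point is the bookkeeping of the induction: the two lowering operators decrease $m$ but shift $q$ in opposite directions, so the nonvanishing must be proved for all $q$ at once, with the two families $(X_\theta)^N_{(-1)}\vac$ and $(X_{-\a_p})^N_{(-1)}\vac$ serving as the base of both descents; tying these base vectors back to the ones already handled in Theorem \ref{generalhs} is what lets the induction close.
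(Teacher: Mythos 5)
Your proposal is correct and follows essentially the same route as the paper: reduce everything to the nonvanishing of $\pi(v_{m,q})$, use the nonvanishing of $c_{m,q}$ and $d_{m,q}$ at $k=-\tfrac{n+1}{2}$ to descend from $v_{m,q}$ to the $m=0$ case via Lemma \ref{55}(2),(3), verify $v_{0,q}\ne 0$ by applying $(X_{-\theta})_{(1)}$ resp.\ $(X_{-\alpha_p})_{(1)}$, and conclude infiniteness from the distinct (conformal) weights. The only cosmetic difference is that you handle the base case by identifying $v_{0,q}$ with $(X_\theta)^q_{(-1)}\vac$, $(X_{-\a_p})^{-q}_{(-1)}\vac$ and invoking Lemma \ref{actionxeta}, whereas the paper specializes formulas \eqref{-ewhatever} and \eqref{-e-ea} of Lemma \ref{actioneij} --- these are the identical computation.
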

\begin{proof} Since $k=-\frac{n+1}{2}$ and $n\geq 3$, it is clear from \eqref{cmq} and \eqref{cmq2} that $c_{m,q}\ne 0$  for all $m\ge1$. An obvious induction using Lemma \ref{55} (2)  shows that, if $q\ge 0$ and $v_{0,q+m}\ne 0$, then $v_{m,q}\ne 0$. Likewise, if $q\le 0$ and $v_{0,q-m}\ne 0$ then $v_{m,q}\ne 0$. We need only to prove that $v_{0,q}\ne 0$ for all $q$. 

If $q\ge0$, from \eqref{-ewhatever} we deduce that, 
\begin{align*}
(X_{-\e_1+\e_{n+1}})_{(1)}v_{0,q}&=(X_{-\e_1+\e_{n+1}})_{(1)}v_{0,q,0,0}\\
&=q(k-q+1)v_{0,q-1,0,0}=q(k-q+1)v_{0,q-1}.
\end{align*}
 Since $q(k-q+1)\ne 0$ for all $q\ge 1$ an obvious induction shows that $v_{0,q}\ne0$.

Similarly, if $q\le 0$, from \eqref{-e-ea} we deduce that, 
\begin{align*}
(X_{\e_h-\e_{h+1}})_{(1)}v_{0,q}&=(X_{\e_h-\e_{h+1}})_{(1)}v_{-q,0,0,0}\\
&=-q(k+q+1)v_{-q-1,0,0,0}=-q(k+q+1)v_{0,q+1}.
\end{align*}
 Since $-q(k+q+1)\ne 0$ for all $q\le -1$ an obvious induction shows that $v_{0,q}\ne0$. 

The set  $\{v_{m,q}\mid m\ge1\}$ is linearly independent since the vectors $v_{m,q}$ have different weights, thus the second statement follows.
\end{proof}

\subsection{Proof of Theorem \ref{infinidec} (2)}
We now discuss the embedding of type $A_{h-1}\times A_{n-h}\times \C\varpi$ in $A_{n}$ with either $h= 2$ or  $n-h=1$, conformal level $k=-1$. Without loss of generality we may assume $h=2$.

We use the Kac-Wakimoto free field realization of $V_{-1}(gl(n+1))$ described in the proof of Theorem \ref{nequal5}.
With the notation used there, we have, as 
$V_{-1}(gl(2))\otimes V_{-1}(gl(n-1))$-modules, 
$$V_{(-1,-1)}(gl(n+1))=\sum_{q\in\ganz} F^q_1\otimes F^{-q}_{2}.
$$
We may deduce from \cite[Remark 3.3]{Kw} that 
\begin{equation}\label{decompA1}
F^q_1=\sum_{j=0}^\infty L_{A_1}((2j+|q|)\omega_1)\otimes L_{\C I_2}(q\zeta_{I_2})
\end{equation}
as $V_{-1}(A_1)\otimes V_{-1}(\C I_2)$-module.

Arguing as in the proof of Theorem \ref{nequal5} we see that, if $n>3$,
$$
V_{-1}(A_n)^{(q)}=\sum_{j\ge0}L_{A_1}((2j+q)\omega_1)\otimes L_{A_{n-2}}(q\omega_{n-2})\otimes L_{\C\varpi}(q\zeta)
$$
when $q\ge0$, while, for $q\le 0$,
$$
V_{-1}(A_n)^{(q)}=\sum_{j\ge0}L_{A_1}((2j-q)\omega_1)\otimes L_{A_{n-2}}(q\omega_{1})\otimes L_{\C\varpi}(q\zeta).
$$
If $n=3$,
$$
V_{-1}(A_3)^{(q)}=\sum_{j,j'\ge0}L_{A_1}((2j+|q|)\omega_1)\otimes L_{A_1}((2j'+|q|)\omega_1)\otimes L_{\C\varpi}(q\zeta).
$$
In both cases the $q$-eigenspace of $\varpi_{(0)}$ decomposes with infinitely many factors.

\subsection{Proof of Theorem \ref{infinidec} (3)}\label{ex53} We now discuss the embedding of type $A_{1}\times \C\varpi$ in $A_{2}$, conformal level  $k=-\frac{3}{2}$.

We consider the lattice vertex algebra $V_L$
associated to the lattice $L =\ganz\a+\ganz\be+\ganz\d$ such that 
$$
\langle\a,\a\rangle= -\langle\be,\be\rangle =\langle\d,\d\rangle= 1
$$
(other products of basis vectors are zero).
Let $F_{-1}$ be the lattice vertex algebra associated to the lattice $\ganz \phi$ with  $\langle\phi, \phi\rangle = -1$. The operator $\phi_{(0)}$ acts semisimply on $F_{-1}$ and it defines a $\ganz$- gradation
$$
F_{-1}=\oplus_{\ell\in\ganz}F_{-1}^\ell,\quad {\phi_{(0)}}_{|F_{-1}^\ell} =-\ell Id.
$$

In \cite{A-2014} the following facts are shown:
\begin{enumerate}
\item The simple $N = 4$ superconformal vertex algebra $V = L^{N=4}_c$ is realized as a subalgebra of $V_L$.
\item The operator $\d_{(0)}$ acts semisimply on $V$ and  defines a $\ganz$-gradation
$$
V=\oplus_{\ell\in\ganz}V^\ell,\quad {\d_{(0)}}_{|V^\ell} =\ell Id.
$$
\item $V_{-3/2}(sl(2))$ embeds in $V^0$. This turns $V_L$ into a $\widehat{sl(2)}$-module and $e^{t\d}$ is a singular vector for this action for all $t\in\ganz$.
\item
Let $Q=e^{\a+\be-2\d}_{(0)}$. Then $Q$ commutes with the action of $\widehat{sl(2)}$.
\item $V$ is  the maximal $sl(2)$-integrable
part of the Clifford-Weyl vertex algebra $M\otimes F \subset V_L$.
\item 
The subalgebra
$\oplus_{\ell\in\ganz}V^\ell\otimes F_{-1}^\ell$ of $V\otimes F_{-1}$ is isomorphic to the simple affine vertex algebra $V_{-3/2}(sl(3))$.
\end{enumerate}

As a consequence of (1)--(6) we have
\begin{theorem}
For each $q\in\ganz$ there are infinitely many $\widehat{sl(2)}$-singular vectors in $V_{-3/2}(sl(3))^{(q)}$.
In particular $V_{-3/2}(sl(3))^{(q)}$ does not decompose finitely as a $\widehat{sl(2)}$-module.
\end{theorem}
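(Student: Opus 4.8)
The plan is to read the statement off the explicit realization recorded in (1)--(6). There $V_{-3/2}(sl(3))$ is the subalgebra $\bigoplus_{\ell\in\ganz}V^\ell\otimes F_{-1}^\ell$ of $V\otimes F_{-1}$, and the entire $\widehat{sl(2)}$-action comes from $V_{-3/2}(sl(2))\subset V^0\subset V$. First I would match the gradings: by (2) the operator $\d_{(0)}$ acts on $V^\ell$ by $\ell$ while $\phi_{(0)}$ acts on $F_{-1}^\ell$ by $-\ell$, so each summand $V^\ell\otimes F_{-1}^\ell$ is a single $\varpi_{(0)}$-eigenspace and I may identify $V_{-3/2}(sl(3))^{(q)}=V^q\otimes F_{-1}^q$. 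Because the currents generating $\widehat{sl(2)}$ lie in $V^0\otimes\vac$, in the tensor vertex algebra they act on $V\otimes F_{-1}$ through the first factor only; hence for every $\widehat{sl(2)}$-singular $s\in V^q$ and every $f\in F_{-1}^q$ the vector $s\otimes f$ is $\widehat{sl(2)}$-singular and lies in $V_{-3/2}(sl(3))^{(q)}$. The problem thus reduces to producing $\widehat{sl(2)}$-singular vectors inside the single graded piece $V^q$.

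These are furnished by (3)--(5). By (3) each $e^{(q+2n)\d}$ is $\widehat{sl(2)}$-singular, and by (4) the zero mode $Q=e^{\a+\be-2\d}_{(0)}$ commutes with $\widehat{sl(2)}$; therefore $s_n:=Q^n e^{(q+2n)\d}$ is $\widehat{sl(2)}$-singular for every $n\ge 0$. A charge count on the $\d$-grading gives $\d_{(0)}s_n=\big((q+2n)-2n\big)s_n=q\,s_n$, and the realization (1)--(5) places $s_n$ in $V^q$ (indeed $Q$, commuting with $\widehat{sl(2)}$, preserves the maximal $sl(2)$-integrable part $V$ of (5)). Setting $w_n:=s_n\otimes e^{q\phi}\in V^q\otimes F_{-1}^q$, the previous paragraph shows that each $w_n$ is a $\widehat{sl(2)}$-singular vector of $V_{-3/2}(sl(3))^{(q)}$.

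That this forces an infinite decomposition can be seen in two complementary ways. The identification $V_{-3/2}(sl(3))^{(q)}=V^q\otimes F_{-1}^q$, with $\widehat{sl(2)}$ acting only on the first factor, already exhibits $F_{-1}^q=S(\phi_{(-1)},\phi_{(-2)},\dots)\otimes\C e^{q\phi}$ as an infinite-dimensional multiplicity space, so a single nonzero $\widehat{sl(2)}$-singular vector $s\in V^q$ yields infinitely many linearly independent singular vectors $s\otimes f$, with $f$ running over a basis of $F_{-1}^q$. Moreover, since $Q$ commutes with $h_{(0)}$, the $sl(2)$-weight of $w_n$ equals that of $e^{(q+2n)\d}$, which grows with $n$, so the $w_n$ even have pairwise distinct $sl(2)$-weights and generate pairwise non-isomorphic submodules. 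In either description $V_{-3/2}(sl(3))^{(q)}$ is not a finite sum of $\widehat{sl(2)}$-irreducibles.

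The one step demanding genuine computation --- and the main obstacle --- is the nonvanishing $s_n\ne 0$ (equivalently $w_n\ne 0$): $Q$ is a screening operator, the zero mode of the weight-two field $Y(e^{\a+\be-2\d},z)$, and such operators may have kernels. I would settle it by a leading-term argument: iterating the operator product of $Y(e^{\a+\be-2\d},z)$ against $e^{(q+2n)\d}$ produces a top exponential $e^{q\d+n(\a+\be)}$ whose coefficient is an explicit product of scalars fixed by the inner products $\langle\a,\a\rangle=\langle\d,\d\rangle=-\langle\be,\be\rangle=1$ and by the level $-3/2$. Verifying that this product does not vanish gives $s_n\ne 0$, and, since the top terms are supported at distinct lattice points, the linear independence of $\{w_n\}$. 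This residue computation is routine but is precisely where the special level and the chosen lattice enter.
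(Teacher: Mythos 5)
Your construction coincides with the paper's: the proof in Section 5.3 uses exactly the vectors $Q^{j}e^{t\d}\otimes e^{q\phi}$ with $t-2j=q$, invokes facts (1)--(6) in the same way, and gets linear independence from the growing $sl(2)$-weights. The genuine gap is in the one step you defer, the nonvanishing. This is not a routine residue computation: the paper does not compute it but quotes the precise criterion from \cite{AM1}, namely $Q^{j}e^{t\d}\ne 0$ if and only if $0\le j\le t$. In particular your vectors $s_n=Q^{n}e^{(q+2n)\d}$ are actually \emph{zero} whenever $q<0$ and $0\le n<-q$ (the condition $n\le q+2n$ fails there), so the uniform nonvanishing you propose to verify is false, and any argument producing "a nonzero product of scalars" as the leading coefficient for all $n$ must be wrong, since it would fail to detect this vanishing. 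The conclusion survives, because for $q<0$ the vectors with $n\ge -q$ are still nonzero and there are infinitely many of them; the paper instead writes the singular vectors for $q\le 0$ as $Q^{\,j-q}e^{(-q+2j)\d}\otimes e^{q\phi}$, $j\ge 0$, which are all nonzero by the criterion. So the fix is: shift the exponents as the paper does (or restrict to $n\ge|q|$), and replace your residue sketch by the citation of \cite{AM1} --- proving that criterion from scratch is the hard content here, not a routine check.

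A further remark on your "first complementary way" (tensoring one singular $s\in V^q$ with a basis of the infinite-dimensional space $F_{-1}^q$). It does prove the theorem as literally stated, since the statement only concerns $\widehat{sl(2)}$, and it is simpler. But it is insufficient for the purpose this theorem serves, namely Theorem \ref{infinidec}(3), which asserts infinite decomposition as a $\widehat{\g^0}$-module with $\g^0=sl(2)\oplus\C\varpi$: for general $f\in F_{-1}^q$ the vector $s\otimes f$ is not annihilated by the Heisenberg modes $\varpi_{(n)}$, $n>0$, which act through $\phi_{(n)}$ on the second factor; indeed, as a module over $\widehat{sl(2)}$ together with the Heisenberg algebra, $V^q\otimes F_{-1}^q$ is just (decomposition of $V^q$) tensored with a single Fock module, so the multiplicity space contributes nothing. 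This is why the paper insists on the second factor being the Fock highest weight vector $e^{q\phi}$: those vectors are singular for all of $\widehat{\g^0}$, and their pairwise distinct $sl(2)$-weights (your second way) are what give the infinitude of the $\widehat{\g^0}$-decomposition.
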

\begin{proof}
The embedding $\widetilde V_{-3/2}(sl(2)\oplus\C\varpi)\subset V_{-3/2}(sl(3))$ corresponds to the pair of embeddings  
$$V_{-3/2}(sl(2))\subset V^0\otimes \C\subset V^0\otimes F_{-1}^0,\quad \C\varpi\subset\C\otimes F_{-1}^0,$$ the rightmost one  mapping $\varpi$ to $\vac\otimes\phi_{(0)}\vac$.

Since $e^{t\d}$ is singular for $\widehat{sl(2)}$ and $Q$ commutes with the action of $\widehat{sl(2)}$, we have that $Q^me^{t\d}$ is singular for all $t\in \ganz$, $m\in\nat$. By \cite{AM1}, $Q^je^{t\d}\ne 0$ iff $0\le j\le t$. It is easy to check that $Q^je^{t\d}$ is $sl(2)$-integral, so $Q^je^{t\d}\in V$. Note also that $Q^je^{t\d}\in V^{t-2j}$, hence $Q^je^{t\d}\otimes e^{(t-2j)\phi}\in V^{t-2j}\otimes F_{-1}^{t-2j}\subset V_k(sl(3))$ and it is clearly a $\widehat{sl(2)\oplus\C\varpi}$-singular vector.

It follows that, if $\ell\ge 0$,   the vectors $v_{\ell,j}=Q^je^{(\ell+2j)\d}\otimes e^{\ell\phi}$ are nonzero $\widehat{\g^0}$-singular vectors for all $j\ge 0$. Since the $sl(2)\oplus \C\varpi$-weight of $v_{\ell,j}$ is $(\ell+2j)\omega_1+\ell\zeta$, we see that the set  $\{v_{\ell,j}\mid j\ge 0\}$ provides an infinite family of linearly independent singular vectors in the $\ell$-eigenspace of $\varpi_{(0)}$.

If $\ell\le 0$, then  the vectors $v_{\ell,j}=Q^{j-\ell}e^{(-\ell+2j)\d} \otimes e^{\ell\phi}$ are nonzero $\widehat{\g^0}$-singular vectors for all $j\ge 0$. Since the $sl(2)\oplus \C\varpi$-weight of $v_{\ell,j}$ is $(-\ell+2j)\omega_1+\ell\zeta$, we see that the set  $\{v_{\ell,j}\mid j\ge 0\}$ provides an infinite family of linearly independent singular vectors in the $\ell$-eigenspace of $\varpi_{(0)}$.
\end{proof}

\subsection{ Proof of Theorem \ref{infinidec} (4)}

Now we discuss embedding of $A_1 \times {\mathbb C}  \varpi  $ in $B_2 =C_2$ at  conformal level $k =-1/2$. 
 We know that $V_{-1/2} (B_2)$ is an even subalgebra of the Weyl vertex algebra $F$ generated by $a_i ^{\pm}$, $i=1,2$, and Kac-Wakimoto free field realization gives that $V_{-1} (sl_2) $ is realized as the subalgebra of $F$ generated by
\begin{equation}\label{efh} e= (a_1^ + )_{-1} a_2 ^{-} , \ f= (a_2 ^ +) _{-1} a_1 ^{-} ,  \ h=- (a_1^ +) _{-1} a_1 ^-  + ( a_2^ +) _{-1} a_2 ^-.
\end{equation}
One checks that  $\varpi =  (a_1^ +) _{-1} a_1 ^-  + ( a_2^ +) _{-1} a_2 ^-.$ By using again \cite[Remark 3.3]{Kw}  we have the following decomposition
$$V_{-1/2} (B_2)= \bigoplus_{q \in \Z}  F^ {2 q},$$
and 
\begin{equation}\label{f2q}F^{2q} = \bigoplus_{j =0}^{\infty} L_{A_1}(( 2j + 2 \vert q\vert )\omega_1) \otimes L_{\C \varpi} (2 q \varpi).\end{equation}
\begin{rem}
Denote by $v_{j,q}$ the singular vector  in $V_{-1/2} (B_2)$ such that
$$V_{-1}(gl_2)\cdot v_{j,q} =L_{A_1}(( 2j + 2  \vert q\vert )\omega_1) \otimes L_{\C \varpi} (2 q \varpi)$$
(cf. \eqref{f2q}).

 We shall  now provide a sketchy derivation of explicit  formulas for these  singular vectors.
Consider the lattice vertex algebra $V_L$ associated to the lattice
$L= {\Z} {\alpha_1} + {\Z} {\alpha_2} +{\Z}{\beta_1} + {\Z}{\beta_2}$ such that
$$ \langle \alpha_i , \alpha_j \rangle = -\langle \beta_i, \beta_j\rangle = \delta_{i,j}, \quad \langle \alpha_i, \beta_j \rangle = 0, \qquad 
i, j \in \{1, 2\}. $$
Then the Weyl vertex algebra $F$ introduced above is isomorphic to the subalgebra of $V_L$ generated by 
$$ a_1 ^+  = e^{\alpha_1 + \beta_1}, \  a_1 ^- = - \alpha_1 (-1)  e^{-\alpha_1 - \beta_1},  \ a_1 ^+ = \alpha_2 (-1)  e^{-\alpha_1 - \beta_1}, \  a_2 ^-  = e^{\alpha_2 + \beta_2}. $$
The simple affine vertex algebra $V_{-1}(sl_2)$ is realized as a subalgebra of $F$ generated by $e,f,h$ given by \eqref{efh}. Note that 
$h= - (\beta_1 + \beta_2)$ and that  $\varpi  =\beta_1 -  \beta_2.$
We have  the following screening operators $$Q^+ = e ^{\alpha_1 - \alpha_2} _0 = \mbox{Res}_z Y(e ^{\alpha_1 - \alpha_2}, z),\quad  Q^- = e ^{\alpha_2 - \alpha_1} _0 = \mbox{Res}_z Y(e ^{\alpha_2 - \alpha_1}, z). $$ 

Set $\delta = 2 \alpha_2 + \beta_1 + \beta_2$, $\varphi = \beta_2-\beta_1$. Then the singular vectors are given by the following formulas
$$ v_{j,n} =(Q^+ ) ^ j  e ^{ (j+ n )  \delta  + n \varphi }, \quad v_{j,-n} =(Q^+ ) ^ {2n + j }  e ^{ (j+ n )  \delta  - n \varphi } \qquad (n \ge 0). $$
The proof that $v_{j,n}$ are elements of  $V_{-1/2} (B_2) \subset F$ uses description of the Weyl vertex algebra as kernel of certain screening operators, and it is omitted.

\end{rem}

\section{Explicit decompositions in the semisimple case}\label{6}
As explained in the proof of Proposition \ref{cw}, we have a complete description of the singular vectors occurring in finite decompositions, thus we can compute explicitly the summands of the decomposition. The list of all decompositions is given below. For $k=1$ all
decompositions give simple current extensions  (see \cite{KMP} for definitions).
In the  non-semisimple cases with finite decomposition we deal with, the explicit decomposition is given by formula \eqref{efd}.

\vskip15pt
\subsection{Type $D$ (level $1$)}

\begin{center}
\begin{tabular}{c|c}
$\g^0$&Decomposition\\
\hline\noalign{\smallskip}
$D_h\times D_{n-h}$, $h\ge 4$, $n-h\ge 4$&$(\dot\L_0,\ddot\L_0)\oplus(\dot\L_1,\ddot\L_1)$\\
\hline\noalign{\smallskip}
$A_1\times A_{1}\times D_{n-2}$, $n\ge 6$&$(\dot\L_0,\ddot\L_0,\dddot\L_0)\oplus(\dot\L_1,\ddot\L_1,\dddot\L_1)$\\
\hline\noalign{\smallskip}
$A_1 \times A_1\times A_{3}$, $n=5$&$(\dot\L_0,\ddot\L_0,\dddot\L_0)\oplus(\dot\L_1,\ddot\L_1,\dddot\L_2)$\\
\hline\noalign{\smallskip}
$A_1 \times A_1\times A_{1}\times A_1$, $n=4$&$(\dot\L_0,\ddot\L_0,\dddot\L_0,\ddddot\L_0)\oplus(\dot\L_1,\ddot\L_1,\dddot\L_1,\ddddot\L_1)$\\
\hline\noalign{\smallskip}
$A_3\times D_{n-3}$, $n\ge 7$&$(\dot\L_0,\ddot\L_0)\oplus(\dot\L_2,\ddot\L_1)$\\
\hline\noalign{\smallskip}
$A_3\times A_{3}$, $n= 6$&$(\dot\L_0,\ddot\L_0)\oplus(\dot\L_2,\ddot\L_2)$\\
\end{tabular}
\end{center}
\vskip15pt

\subsection{Type $B$}

\begin{center}
\begin{tabular}{c|c|c}
$\g^0$&level&Decomposition\\
\noalign{\smallskip}\hline\noalign{\smallskip}
$D_h\times B_{n-h}$, $h\ge 4$, $n-h\ge 3$&1&$(\dot\L_0,\ddot\L_0)\oplus(\dot\L_1,\ddot\L_1)$\\
\noalign{\smallskip}\hline\noalign{\smallskip}
$A_3\times B_{n-3}$, $n\ge 6$&1&$(\dot\L_0,\ddot\L_0)\oplus(\dot\L_2,\ddot\L_1)$\\
\noalign{\smallskip}\hline\noalign{\smallskip}
$A_1 \times A_1\times B_{n-2}$, $n\ge 5$&1&$(\dot\L_0,\ddot\L_0,\dddot\L_0)\oplus(\dot\L_1,\ddot\L_1,\dddot\L_1)$\\
\noalign{\smallskip}\hline\noalign{\smallskip}
$D_{n-2}\times C_{2}$, $n\ge 6$&1&$(\dot\L_0,\ddot\L_0)\oplus(\dot\L_1,\ddot\L_2)$\\
\noalign{\smallskip}\hline\noalign{\smallskip}
$A_1 \times A_1\times C_{2}$, $n= 4$&1&$(\dot\L_0,\ddot\L_0,\dddot\L_0)\oplus(\dot\L_1,\ddot\L_1,\dddot\L_2)$\\
\noalign{\smallskip}\hline\noalign{\smallskip}
$A_3\times C_{2}$, $n=5$&1&$(\dot\L_0,\ddot\L_0)\oplus(\dot\L_2,\ddot\L_2)$\\
\noalign{\smallskip}\hline\noalign{\smallskip}
$D_{n-1}\times A_{1}$, $n\ge 5$&1&$(\dot\L_0,2\ddot\L_0)\oplus(\dot\L_1,2\ddot\L_1)$\\
\noalign{\smallskip}\hline\noalign{\smallskip}
$A_1 \times A_1\times A_{1}$, $n= 3$&1&$(\dot\L_0,\ddot\L_0.2\dddot\L_0)\oplus(\dot\L_1,\ddot\L_1,2\dddot\L_1)$\\
\noalign{\smallskip}\hline\noalign{\smallskip}
$A_3\times A_{1}$, $n=4$&1&$(\dot\L_0,2\ddot\L_0)\oplus(\dot\L_2,2\ddot\L_1)$\\
\noalign{\smallskip}\hline\noalign{\smallskip}
$D_{n}$, $n\ge 4$&$\frac{3}{2}-n$&$((\frac{3}{2}-n)\dot\L_0)\oplus((\frac{1}{2}-n)\dot\L_0+\dot\L_1)$\\
\noalign{\smallskip}\hline\noalign{\smallskip}
$A_3$, $n=3$&$-\frac{3}{2}$&$(-\frac{3}{2}\dot\L_0)\oplus(-\frac{5}{2}\dot\L_0+\dot\L_1)$\\
\end{tabular}
\end{center}
\vskip15pt
\subsection{Type $C$ (level $-\tfrac{1}{2}$)}

\begin{center}
\begin{tabular}{c|c}
$\g^0$&Decomposition\\
\hline\noalign{\smallskip}
$C_h\times C_{n-h}$, $h\ge 2$, $n-h\ge 2$&$(-\frac{1}{2}\dot\L_0,-\frac{1}{2}\ddot\L_0)\oplus(-\frac{3}{2}\dot\L_0+\dot\L_1,-\frac{3}{2}\ddot\L_0+\ddot\L_1)$\\
\noalign{\smallskip}\hline\noalign{\smallskip}
$A_1\times C_{n-1}$, $n\ge 3$&$(-\frac{1}{2}\dot\L_0,-\frac{1}{2}\ddot\L_0)\oplus(-\frac{3}{2}\dot\L_0+\dot\L_1,-\frac{3}{2}\ddot\L_0+\ddot\L_1)$\\
\noalign{\smallskip}\hline\noalign{\smallskip}
$A_1 \times A_1$, $n=2$&$(-\frac{1}{2}\dot\L_0,-\frac{1}{2}\ddot\L_0)\oplus(-\frac{3}{2}\dot\L_0+\dot\L_1,-\frac{3}{2}\ddot\L_0+\ddot\L_1)$
\end{tabular}
\end{center}
\vskip15pt
\subsection{Type $E_6$ (level $1$)}

\begin{center}
\begin{tabular}{c|c}
$\g^0$&Decomposition\\
\hline\noalign{\smallskip}
$A_{1}\times A_5$&$(\dot\L_0,\ddot\L_0)+(\dot\L_1,\ddot\L_3)$\\
\hline\noalign{\smallskip}
$A_{2}\times A_2 \times A_2$&$(\dot\L_0,\ddot\L_0,\dddot\L_0)+(\dot\L_1,\ddot\L_1,\dddot\L_1)+(\dot\L_2,\ddot\L_2,\dddot\L_2)$\\
\end{tabular}
\end{center}
\subsection{Type $E_7$ (level $1$)}

\begin{center}
\begin{tabular}{c|c}
$\g^0$&Decomposition\\
\hline\noalign{\smallskip}
$A_{1}\times D_6$&$(\dot\L_0,\ddot\L_0)+(\dot\L_1,\ddot\L_6)$\\
\hline\noalign{\smallskip}
$A_{2}\times A_5$&$(\dot\L_0,\ddot\L_0)+(\dot\L_1,\ddot\L_4)+(\dot\L_2,\ddot\L_2)$\\\hline
$A_7$&$\dot\L_0+\dot\L_4$\\
\end{tabular}
\end{center}
\vskip15pt
\subsection{Type $E_8$ (level $1$)}

\begin{center}
\begin{tabular}{c|c}
$\g^0$&Decomposition\\
\hline\noalign{\smallskip}
$A_{1}\times E_7$&$(\dot\L_0,\ddot\L_0)+(\dot\L_1,\ddot\L_7)$\\
\hline\noalign{\smallskip}
$A_{2}\times E_6$&$(\dot\L_0,\ddot\L_0)+(\dot\L_2,\ddot\L_1)+(\dot\L_1,\ddot\L_6)$\\\hline
$A_{4}\times A_4$&$(\dot\L_0,\ddot\L_0)+(\dot\L_4,\ddot\L_2)+(\dot\L_1,\ddot\L_3)+(\dot\L_2,\ddot\L_4)+(\dot\L_3,\ddot\L_1)$\\\hline
$D_8$&$\dot\L_0+\dot\L_7$\\\hline
$A_8$&$\dot\L_0+\dot\L_6+\dot\L_3$
\end{tabular}
\end{center}
\vskip15pt
\subsection{Type $F_4$}

\begin{center}
\begin{tabular}{c|c|c}
$\g^0$&level&Decomposition\\
\hline\noalign{\smallskip}
$A_{1}\times C_3$&1&$(\dot\L_0,\ddot\L_0)+(\dot\L_1,\ddot\L_3)$\\
\hline\noalign{\smallskip}
$A_{2}\times A_2$&1&$(\dot\L_0,2\ddot\L_0)+(\dot\L_2,2\ddot\L_1)+(\dot\L_1,2\ddot\L_2)$\\\hline
$B_4$&$-\frac{5}{2}$&$(-\frac{5}{2}\dot\L_0)+(-\frac{7}{2}\dot\L_0+\dot\L_4).$
\end{tabular}
\end{center}
\vskip15pt
\subsection{Type $G_2$}

\begin{center}
\begin{tabular}{c|c|c}
$\g^0$&level&Decomposition\\
\hline\noalign{\smallskip}
$A_{1}\times A_1$&1&$(\dot\L_0,\ddot\L_0)+(\dot\L_1,3\ddot\L_1)$\\
\hline\noalign{\smallskip}
$A_2$&$-\frac{5}{3}$&$(-\frac{5}{3}\dot\L_0)+(-\frac{8}{3}\dot\L_0+\dot\L_1)+(-\frac{8}{3}\dot\L_0+\dot\L_2).$
\end{tabular}
\end{center}
\vskip10pt
\vskip15pt

  \vskip10pt
  \footnotesize{
  \noindent{\bf D.A.}:  Department of Mathematics, University of Zagreb, Bijeni\v{c}ka 30, 10 000 Zagreb, Croatia;
{\tt adamovic@math.hr}
  
\noindent{\bf V.K.}: Department of Mathematics, MIT, 77
Mass. Ave, Cambridge, MA 02139;\newline
{\tt kac@math.mit.edu}

\noindent{\bf P.MF.}: Politecnico di Milano, Polo regionale di Como,
Via Valleggio 11, 22100 Como,
Italy; {\tt pierluigi.moseneder@polimi.it}

\noindent{\bf P.P.}: Dipartimento di Matematica, Sapienza Universit\`a di Roma, P.le A. Moro 2,
00185, Roma, Italy; {\tt papi@mat.uniroma1.it}

\noindent{\bf O.P.}:  Department of Mathematics, University of Zagreb, Bijeni\v{c}ka 30, 10 000 Zagreb, Croatia;
{\tt perse@math.hr}
}

\end{document}